\newtheorem{theorem}{Theorem}[section]
\newtheorem{lemma}[theorem]{Lemma}
\newtheorem{proposition}[theorem]{Proposition}
\newtheorem{corollary}[theorem]{Corollary}
\newtheorem{conjecture}[theorem]{Conjecture}
\theoremstyle{definition}
  \newtheorem{remark}[theorem]{Remark}
  \newtheorem{definition}[theorem]{Definition}
  \newtheorem{example}[theorem]{Example}
\newcommand\bbC{\mathbb{C}}
\newcommand\bbZ{\mathbb{Z}}
\newcommand\ZZ{\mathbb{Z}}
\newcommand\calB{\mathcal{B}}
\newcommand\calD{\mathcal{D}}
\newcommand\calL{\mathcal{L}}
\newcommand\calM{\mathcal{M}}
\newcommand\calS{\mathcal{S}}
\newcommand\calT{\mathcal{T}}
\newcommand\fS{\mathfrak{S}}
\newcommand\ch{\mathrm{ch}}
\newcommand\Num{\mathrm{Num}}
\newcommand\SSYT{\mathrm{SSYT}}
\newcommand\SYT{\mathrm{SYT}}
\newcommand\Tab{\mathrm{Tab}}
\newcommand\n{\!\!\!\!-1} 
\renewcommand{\L}[1]{\mathcal{L}_{#1}} 
\renewcommand{\S}[1]{\mathcal{S}_{(#1)}} 
\newcommand{\M}[1]{\mathcal{M}_{(#1)}} 
\DeclareMathOperator{\Hom}{Hom}
\DeclareMathOperator{\im}{im}
\DeclareMathOperator{\Ind}{Ind}
\DeclareMathOperator{\Res}{Res}
\DeclareMathOperator{\Frob}{Frob}
\newcommand{\largewedge}{\mbox{\Large $\wedge$}}
\definecolor{applegreen}{rgb}{0.55,0.71,0.0}
\tikzstyle{vertex}=[circle, draw, inner sep=0pt, minimum size=4pt]
\newcommand{\vertex}{\node[vertex]}
\newsavebox\graphUZVW
\newsavebox\graphUZVWlabeled
\newsavebox\graphUWUZVW
\newsavebox\graphUZVWVZ
\newsavebox\graphUZVWWZ
\newsavebox\graphUWUZVWWZ
\newsavebox\graphUZVWVZWZ
\newsavebox\infinitezero
\newsavebox\infiniteone
\newsavebox\infinitetwo
\title{On the Strength of Chromatic Symmetric Homology for graphs}
\begin{document}
\author[Chandler]{Alex Chandler}
\address[A.\ Chandler]{University of Vienna}
\email{alex.chandler@univie.ac.at}
\urladdr{\url{https://alexchandler.wordpress.ncsu.edu/}}

\author[Sazdanovic]{Radmila Sazdanovic}
\address[R.\ Sazdanovic]{}
\email{rsazdanovic@math.ncsu.edu}
\urladdr{\url{https://sazdanovic.wordpress.ncsu.edu/}}

\author[Stella]{Salvatore Stella}
\address[S.\ Stella]{University of Leicester}
\email{s.stella@leicester.ac.uk}
\urladdr{\url{http://math.haifa.ac.il/~stella/index.html}}

\author[Yip]{Martha Yip}
\address[M.\ Yip]{University of Kentucky}
\email{martha.yip@uky.edu}
\urladdr{\url{http://www.ms.uky.edu/~myip/}}

\begin{abstract}
  In this paper, we investigate the strength of chromatic symmetric homology as a graph invariant.
  Chromatic symmetric homology is a lift of the chromatic symmetric function for graphs to a homological setting, and its Frobenius characteristic is a $q,t$ generalization of the chromatic symmetric function.
  We exhibit three pairs of graphs where each pair has the same chromatic symmetric function but distinct homology.
  We also show that integral chromatic symmetric homology contains torsion, and based on computations, conjecture that $\bbZ_2$-torsion in bigrading $(1,0)$ detects nonplanarity in the graph.
\end{abstract}

\thanks{
  We would like to thank ICERM for the opportunity to work intensively on this project at a Collaborate{@}ICERM meeting in February 2019.
  RS was partially supported by the Simons Collaboration Grant 318086.
  SS was partially supported by ISF grant 1144/16.
  MY is partially supported by the Simons Collaboration Grant 429920.
}

\maketitle
\parskip=5pt

\section{Introduction}

In 1995, Stanley \cite{Sta95} defined a symmetric function generalization of the chromatic polynomial of graphs.
This graph invariant has been categorified to the chromatic symmetric homology by the second and fourth authors~\cite{SY18}, in the spirit of Khovanov and related link and graph homology theories.
Given a graph $G$ with $n$ vertices, the chromatic symmetric homology $H_{i,j}(G;\bbC)$ is a bigraded  $\bbC[\fS_n]$-module, whose bigraded Frobenius characteristic $\Frob_G(q,t)$ is equal to Stanley's chromatic symmetric function $X_G$ when specialized at $q=t=1$.
This construction is not restricted to $\bbC$; in particular it works over $\bbZ$ and $\bbZ_p$ for prime numbers $p \geq 2$.
The main results of this paper offer answers and insights to the following questions:
\begin{enumerate}
    \item
      Does the categorified invariant detect more information than the original invariant?
    \item
      In the case that the categorified invariant takes the form of integral homology, does it contain torsion?
      If so, does torsion detect any additional information about the graph?
\end{enumerate}

\noindent{\bf Theorem~\ref{thm.stronger}.} \textit{Chromatic symmetric homology over $\bbC$ is strictly stronger than $X_G$.}

This result is based on finding explicit examples of pairs of graphs which are distinguished by the chromatic symmetric homology but not by the chromatic symmetric function.
A similar result holds for Khovanov homology and the Jones polynomial~\cite{Bar1}.

To fully explore the strength of the chromatic symmetric homology we also work over $\bbZ$.
As the chromatic symmetric function is recovered by specializing the Frobenius characteristic of the homology, this implies that any torsion in chromatic symmetric homology is not directly captured in this decategorification process.
Thus, torsion in chromatic symmetric homology has the potential to carry extra information about the graph which is undetected by the chromatic symmetric function.
Analyzing torsion in related homology theories has turned out to be fruitful, see for example~\cite{CLSS,HGR,HGRP,Kan18,LS,MS,MPSWY,PPS, PS,S1,S2,Sum19}.

\noindent{\bf Theorems~\ref{thm.K5} and~\ref{thm.K33}.}
\textit{Over the integral domain $\mathbb{Z}$, the chromatic symmetric homology of the nonplanar graphs $K_5$ and $K_{3,3}$ have $\mathbb{Z}_2$-torsion.}

With Kuratwoski's theorem in mind, the above theorems indicate that chromatic symmetric homology may provide an algebraic obstruction for the planarity of graphs.
There are $14$ nonplanar graphs in the set of $143$ connected graphs with up to $6$ vertices.
Our computations (see Appendix~\ref{computations}) show that only these $14$ nonplanar graphs contains $\ZZ_2$-torsion in bidegree $(1,0)$ homology.
Thus we make the following conjecture.

\noindent{\bf Conjecture~\ref{tor.conj}} \textit{A graph $G$ is nonplanar if and only if the chromatic symmetric homology in bidegree $(1,0)$ contains $\bbZ_2$-torsion.}

We have found that other types of torsion can appear in chromatic symmetric homology.
Our computations (see Section~\ref{z3torsion}) show that the homology of the star graph on $7$ vertices contains $\bbZ_3$-torsion in bidegree $(1,0)$.  This raises the question of whether every type of torsion can appear in chromatic symmetric homology, and whether it characterizes other graph properties.

Due to the computational complexity of the problem, we restrict our attention to computing chromatic symmetric homology only in $q$-degree zero, and so we write $H_i(G)=H_{i,0}(G)$ to simplify the notation.
It turns out that for the purposes of answering the two questions outlined above, this restriction suffices.

It is worth noting that the following theoretical results played an essential role in cutting down the computation time.
Given a graph with $n$ vertices and $m$ edges, the homological width of the chromatic symmetric chain complex is $m+1$.
In~\cite{Chan19, CS19} the first and second authors construct the broken circuit model for chromatic symmetric homology: a chain complex of homological width $n$ that is quasi-isomorphic to the chromatic symmetric chain complex.
The broken circuit model can be viewed as a categorification of Whitney's broken circuit theorem for the chromatic symmetric polynomial.
For graphs with many edges, this gives a significant reduction in homological width.
For example, for complete graphs, the homological width of the broken circuit model grows linearly in $n$ as opposed to the one of the chromatic symmetric chain complex which grows quadratically in $n$. Furthermore, an algorithm proposed by Lampret \cite{lampret2019chain} uses algebraic Morse theory, via the so-called steepness matchings, to cancel large portions of chain complexes before computing their homology.
Both of these reductions are incorporated into our program to compute chromatic symmetric homology.
Our toy implementation is only capable of computing the $\bbZ$-module structure of the homology and it is available at \cite{code}.
Due to the inefficient way in which differentials are built, it is only usable on graphs with no more than 7 vertices and relatively few edges.
We list the result of computations on all connected graphs with at most 6 vertices in Appendix~\ref{computations}.

This paper is organized as follows.
In Section \ref{sec-two}, we recall the construction of chromatic symmetric homology following~\cite{SY18}, and point out how the computation simplifies when we restrict to zeroth quantum grading.
We also lay the algebraic foundation necessary for computing chromatic symmetric homology in homological degree $1$, which involves developing straightening laws for numberings which are parallel to a dual version for oriented column tabloids.
In Section \ref{sec-three}, we prove Theorem \ref{thm.stronger} by providing examples of pairs of graphs with equal chromatic symmetric functions but different chromatic symmetric homology.
We explicitly compute the first chromatic symmetric homology $H_1$ for these graphs, restricted to specific Specht modules.
In Section \ref{sec-four}, we prove Theorems~\ref{thm.K5} and~\ref{thm.K33} by explicitly finding a generator which has order two in homology.
Last but not least, assuming that Conjecture~\ref{tor.conj} holds, we provide an infinite family of pairs of graphs with the same chromatic symmetric function, but distinct chromatic symmetric homology.

\section{Computing q-degree zero homology}
\label{sec-two}
We briefly recall the construction of chromatic symmetric homology for a graph $G$, before moving on to describe the specific details required for computing the homology in $q$-degree zero.
A complete description of the $q$-graded homology is in the paper~\cite{SY18}.

\subsection{Defining chromatic symmetric homology}
Let $G$ be a graph with vertex set $[n]:=\{1,\ldots, n\}$.
An edge incident to the vertices $i$ and $j$ will be denoted by $e_{ij}$, if $i<j$.  We order the set of edges $E(G)$ lexicographically.

If $G$ has $m$ edges, then the $2^m$ subsets $F\subseteq E(G)$ are the {\em spanning subgraphs} of $G$.
These have the structure of a Boolean lattice $\calB(G)$, ordered by the reverse inclusion.
In the Hasse diagram of the lattice $\calB(G)$, we direct an edge $\varepsilon(F,F')$ from a spanning subgraph $F$ to a spanning subgraph $F'$ if and only if $F'$ can be obtained by removing an edge $e$ from $F$. The {\em sign} of $\varepsilon(F,F')$ is $-1$ to the number of edges in $F$ less than $e$ with respect to the lexicographic ordering on $E(G)$, and we denote this by $\mathrm{sgn}(\varepsilon(F,F'))$. See Figure~\ref{fig.k3} for an example.

\subsubsection{Modules}
We refer the reader to Fulton~\cite[Chapter 7]{Ful97} for background on representations of the symmetric group $\fS_n$.
For any subset $B\subseteq [n]$, let $\fS_B$ denote the subgroup of permutations of $B$.
A {\em set partition} $\beta=\{B_1,\ldots, B_r\}$ of $[n]$ with $r$ parts is a set of $r$ nonempty subsets of $[n]$ which are pairwise disjoint and whose union is $[n]$.

Let $\calS_\lambda$ denote the irreducible $\fS_n$-module indexed by the partition $\lambda\vdash n$.
For $b \in \bbZ_{\geq1}$, let $\L{b}$ denote the $q$-graded $\fS_b$-module
\[
  \L{b} = \largewedge^* \S{b-1,1} = \bigoplus_{i=0}^{b-1} \S{b-i,1^{i}} ,
\]
such that $\S{b-i, 1^{i}}$ is in $q$-degree $i$.

We now assign a $q$-graded $\fS_n$-module to each spanning subgraph $F \subseteq E(G)$.
If $F$ has $r$ connected components $B_1,\ldots, B_r$ of sizes $b_1,\ldots, b_r$, then $\beta = \beta(F) = \{B_1,\ldots, B_r\}$ is the set partition associated to $F$.
The {\em Young subgroup} associated to $F$ is $\fS_{\beta(F)} = \fS_{B_1}\times \cdots \times\fS_{B_r}$.
We assign the $q$-graded $\fS_n$-module
\[
  \calM_F = \Ind_{\fS_{\beta(F)}}^{\fS_n} \left( \L{b_1} \otimes \cdots \otimes \L{b_r} \right)
\]
to the spanning subgraph $F$.

Let $(\calM_F)_j$ denote the $j$-th graded piece of the module $\calM_F$.
Then for $i, j\in \bbZ_{\geq0}$, the {\em $i$-th chain module} for $G$ in $q$-degree $j$ is
\[
  C_{i,j}(G) = \bigoplus_{|F|=i} (\calM_F)_j,
\]
which is a sum over the spanning subgraphs of $G$ with $i$ edges.

\subsubsection{Differentials}
Given spanning subgraphs $F$ and $F'$ of $G$ where $F'=F-e$ is obtained by removing the edge $e$ from $F$, we define an {\em edge map} $d_{\varepsilon(F,F')}: \calM_F \rightarrow \calM_{F'}$.
We will quote some results and refer the reader to~\cite[Section 2]{SY18} for more details.

In the special case that removing $e$ from $F$ does not disconnect a component, then $d_{\varepsilon(F,F')}$ is simply the identity map.

Otherwise, we first consider the simplest case where $F$ is a connected spanning subgraph of $G$ and removing $e$ disconnects $F$ into two components $A$ and $B$.
Suppose $|A|=a$ and $|B|=b$.  Consider the $(\fS_a \times \fS_b)$-module
\[
  \calT = \left(\S{a-1,1} \otimes \S{b} \right) \oplus \left(\S{a} \otimes \S{b-1,1} \right).
\]
If $\calM$ is a $q$-graded $\fS_n$-module, let $\calM\{k\}$ denote the forward $q$-degree shift of $\calM$ by $k$.
Since $\calM_{F'}=\Ind(\calL_a\otimes \calL_b)$, $\calL_a\otimes \calL_b\cong  \largewedge^*\calT$, and $\Res \calM_F \cong (\largewedge^*\calT) \oplus (\largewedge^* \calT) \{1\}$~\cite[Lemma 2.6]{SY18}, then applying Frobenius reciprocity (and noting that $\Ind V$ and $\mathrm{coInd}\, V$ are naturally isomorphic under the Nakayama isomorphism), we have
\begin{equation}
  \label{eqn.frobrep}
  \Hom_{\fS_n}(\calM_F, \calM_{F'})
  \cong \Hom_{\fS_a \times\fS_b} \left((\largewedge^*\calT) \oplus (\largewedge^* \calT) \{1\},  \largewedge^*\calT \right).
\end{equation}
We choose $d_{\varepsilon(F,F')} \in \Hom_{\fS_n}(\calM_F, \calM_{F'})$ to be the map that corresponds to the $(\fS_a\times \fS_b)$-module map that is identity on $(\largewedge^*\calT)$, and zero on $(\largewedge^*\calT)\{1\}$.

In the more general case where $F$ has $r$ connected components $B_1,\ldots, B_r$ of sizes $b_1,\ldots, b_r$ and the removal of $e$ decomposes $B_r$ into two components $A$ and $B$ of sizes $a$ and $b$ respectively, let $d_\zeta: \calL_{a+b} \rightarrow \Ind_{\fS_A\times \fS_B}^{\fS_{B_r}}(\calL_a\otimes \calL_b)$ be the map defined previously, and let $\mathcal{N} = \calL_{b_1}\otimes \cdots \otimes \calL_{b_{r-1}}$.
The map $d_{\varepsilon(F,F')}: \calM_F \rightarrow \calM_{F'}$ is then defined by
\[
  d_{\varepsilon(F,F')} = \Ind_{\fS_{B_1}\times \cdots \times\fS_{B_{r-1}}\times \fS_{B_r}}^{\fS_n} (\mathrm{id}_{\mathcal{N}} \otimes d_\zeta).
\]

Finally, for $i\in \bbZ_{\geq0}$, we define the {\em $i$-th chain map} $d_i: C_i(G) \rightarrow C_{i-1}(G)$ by letting
\[
  d_i = \sum_{\varepsilon} \mathrm{sgn}(\varepsilon) d_\varepsilon,
\]
where the sum is over all edges $\varepsilon$ in $\calB(G)$ which join a subgraph of $G$ with $i$ edges to a subgraph with $i-1$ edges.
See Appendix~\ref{app.K3} for an example on the complete graph $K_3$.

It was shown in~\cite[Proposition 2.10]{SY18} that $d$ is indeed a differential, thus for $i,j \in \bbZ_{\geq0}$, the {\em $(i,j)$-th homology} of $G$ is
\[
  H_{i,j}(G;\bbC) = \ker d_{i,j}/\im d_{i+1,j},
\]
and the associated {\em bigraded Frobenius series} is
\[
  \Frob_G(q,t) = \sum_{i,j\geq0} (-1)^{i+j}t^iq^j \ch(H_{i,j}(G;\bbC)),
\]
where $\ch$ is the Frobenius characteristic map from the ring of representations of the symmetric groups to the ring of symmetric functions.
We have $\Frob_G(1,1)=X_G$, which shows that chromatic symmetric homology is a lift of the chromatic symmetric function~\cite[Theorem 2.13]{SY18}.

\subsection{Computing homology in q-degree zero}
The computation of homology in $q$-degree zero is easier to carry out, due to our choice of the edge maps arising from Frobenius reciprocity (see Equation~\ref{eqn.frobrep}).
The computations performed in Section~\ref{sec.pairs} and Section~\ref{sectorsion} are in $q$-degree zero only, and we see that even with this restriction, the homology still retains rich information about the graph.
From this point on, we will drop the second index in the notation for the chain module $C_{i,j}$ and the homology $H_{i,j}$ as $j$ is always $0$.

First, in $q$-degree zero, the module associated to each spanning subgraph $F\subseteq E(G)$ with connected components $B_1,\ldots, B_r$ is simply the {\em permutation module}
\[
  \calM_F = \Ind_{\fS_{B_1} \times \cdots \times \fS_{B_r}}^{\fS_n} \left( \S{b_1} \otimes \cdots \otimes \S{b_r} \right),
\]
so the $i$-th chain module $C_{i}(G)$ of the graph is a direct sum of $\binom{m}{i}$ permutation modules of $\fS_n$.
If $\lambda$ is the partition whose parts are the sizes of the connected components of $F$, then $\calM_F \cong \calM_\lambda = \bbC[\fS_n] \otimes_{\bbC[\fS_\lambda]} \S{n}$.

As for the edge maps, we first revisit the case where $F$ is a connected spanning subgraph of $G$ and removing $e$ disconnects $F$ into two components $A$ and $B$ which forms $F'$.
Suppose $|A|=a$ and $|B|=b$.
Then we have $\calM_F = \S{a+b}$ and
\[
  \calM_{F'}
  =
  \Ind_{\fS_a\times \fS_b}^{\fS_{a+b}} (\S{a}\otimes \S{b})
  \cong
  \S{a+b} \oplus \bigoplus_{\lambda \rhd (a,b)} \calS_\lambda^{\oplus K_{\lambda, (a,b)} },
\]
where the direct sum is over partitions $\lambda$ which dominate $(a,b)$, and the {\em Kostka number} $K_{\lambda, (a,b)}$ is the number of semistandard Young tableaux of shape $\lambda$ and weight $(a,b)$.
By Frobenius reciprocity, the edge map
\[
  d_\zeta : \S{a+b} \rightarrow \Ind_{\fS_a\times \fS_b}^{\fS_{a+b}} (\S{a}\otimes \S{b})
\]
is simply the inclusion map, and so is the induced edge map $d_{\varepsilon(F,F')}:\calM_F \rightarrow \calM_{F'}$ in the general case when $F$ has more than one connected component.
Since each module $\calM_F$ is cyclically generated, then $d_{\varepsilon(F,F')}$ is completely determined by specifying the image of a cyclic generator for $\calM_F$.

\subsection{Restriction to Specht modules}

We would like to describe the chromatic homology in terms of Specht modules.
Schur's Lemma states that the only nontrivial morphisms between irreducible modules are homothetys, so it suffices to restrict the computation of the homology to Specht modules corresponding to a fixed partition inside each permutation module.
Furthermore, since each Specht module is cyclically generated, then our inclusion maps are completely determined by specifying the image of a cyclic generator for each Specht module.
We next describe how we will achieve these computations systematically.

\subsubsection{Young symmetrizers}
In this section, we will primarily follow the exposition in Fulton~\cite[Section 7]{Ful97}.
In order to compute the homology, it is important for us keep track of the Young subgroup from which we are inducing (not just up to isomorphism), thus, where we depart from Fulton, details will be provided.

Let $\lambda=(\lambda_1,\ldots, \lambda_r) \vdash n$ be a partition with $r$ parts.
A {\em Ferrers diagram} of shape $\lambda$ consists of $r$ rows of boxes justified to the left such that the $i$-th row from the top consists of $\lambda_i$ boxes.

A {\em numbering} of shape $\lambda \vdash n$ is a filling of the Ferrers diagram of shape $\lambda$ with the numbers $1,2,\ldots, n$ each appearing exactly once.
A {\em standard Young tableau} of shape $\lambda$ is a numbering of shape $\lambda$ whose rows increase strictly from left to right, and whose columns increase strictly from top to bottom.
A {\em semistandard Young tableau} of shape $\lambda$ is a filling of shape $\lambda$ such that the numbers $1,2,\ldots,n$ may repeat, and whose rows increase weakly while columns increase strictly.
We let $\Num(\lambda)$ denote the set of $n!$ numberings of shape $\lambda \vdash n$.
The symmetric group $\fS_n$ acts on $\Num(\lambda)$ by permuting the entries in a numbering.

\begin{definition}
  \label{def.totalorder}
  Given a numbering $T$, let $T(i,j)$ denote the entry in the $i$-th row and $j$-th column of $T$ (that is, we use the same indexing convention as that for a matrix).
  We define a total order $\preccurlyeq$ on the set of numberings of a fixed shape as follows.
  If $T$ and $S$ are numberings of shape $\lambda$ such that the $i$-th row is the lowest row in which the numberings are different, the $j$-th column is the rightmost column in that row in which the numberings are different, and $T(i,j) > S(i,j)$, then we say that $T \succcurlyeq S$.
\end{definition}

\begin{example}
  The order on the numberings of shape $(2,1)$ is
  \[
    \ytableaushort{12,3} \succcurlyeq
    \ytableaushort{21,3} \succcurlyeq
    \ytableaushort{13,2} \succcurlyeq
    \ytableaushort{31,2} \succcurlyeq
    \ytableaushort{23,1} \succcurlyeq
    \ytableaushort{32,1}.
  \]
\end{example}

The {\em row group} $R(T)\subseteq \fS_n$ is the subset of permutations that permute elements within each row of $T$, and the {\em column group} $C(T)\subseteq \fS_n$ is the subset of permutations that permute elements within each column of $T$.
The corresponding {\em Young symmetrizers} in $\bbC[\fS_n]$ are
\[
  a_T = \sum_{\rho\in R(T)} \rho,
  \qquad
  b_T = \sum_{\zeta \in C(T)} \mathrm{sgn}(\zeta) \zeta,
  \qquad
  c_T = b_Ta_T.
\]
For any $\pi \in \fS_n$, we have
\[
  R(\pi \cdot T) = \pi R(T) \pi^{-1}
  \qquad \hbox{and} \qquad
  C(\pi \cdot T) = \pi C(T) \pi^{-1},
\]
so that
\begin{equation}
  \label{eqn.abcproperties}
  a_{\pi \cdot T} = \pi a_T\pi^{-1},
  \qquad
  b_{\pi \cdot T} = \pi b_T\pi^{-1},
  \qquad \hbox{and}\qquad
  c_{\pi \cdot T} = \pi c_T\pi^{-1}.
\end{equation}

\begin{definition}
  For any numberings $S,T\in \Num(\lambda)$, we define the permutation $\sigma_{T,S}\in \fS_n$ by
  \[
    \sigma_{T,S} \cdot T = S.
  \]
\end{definition}
Note that for any $\pi\in \fS_n$,
\begin{equation}
  \label{eqn.sigmaproperties}
  \sigma_{T,S}^{-1} = \sigma_{S,T},
  \qquad\hbox{and}\qquad
  \sigma_{\pi\cdot T,S}^{-1} = \pi \sigma_{T,S}^{-1}.
\end{equation}

\begin{definition}
  Let $F$ be a spanning subgraph of $G$, and let $\lambda \vdash n$ be the partition whose parts are the sizes of the connected components of $F$.
  The {\em numbering $T(F)$ associated to $F$} is the numbering of shape $\lambda$ such that each row consists of the elements in a connected component of $F$ arranged in increasing order, and rows of $T(F)$ having the same size are ordered so that the minimum element in each row is increasing down the first column.

  Let $T=T(F)$.
  The {\em $q$-degree zero permutation module $\calM_T$ associated to the numbering $T$} is cyclically generated by the Young symmetrizer $a_T$:
  \[
    \calM_F= \calM_T = \bbC[\fS_n] \cdot a_T.
  \]
\end{definition}

Note that for each $S\in \Num(\lambda)$, we have $\sigma_{T,S}a_T = a_S \sigma_{T,S}$.
Furthermore, if $\rho\in R(T)$, then $\rho a_T = a_{\rho \cdot T} = a_T$.
This leads to the following description of a basis for $\calM_T$.
\begin{proposition}
  \label{prop.Mbasis}
  Let $F$ be a spanning subgraph of $G$ with associated numbering $T$ of shape $\lambda$.
  Then
  \[
    \calM_T = \mathrm{span}\{a_S\sigma_{T,S} \mid S\in \Tab(\lambda)\},
  \]
  where $\Tab(\lambda)$ is the set of numberings of shape $\lambda$ whose rows are increasing.
\end{proposition}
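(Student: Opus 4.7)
The plan is to reduce the spanning claim to the cyclic generation $\calM_T = \bbC[\fS_n] \cdot a_T$ and then use the identity $\sigma_{T,S} a_T = a_S \sigma_{T,S}$ (an instance of \eqref{eqn.abcproperties}) to rewrite every generator in the desired form, after which one collapses the indexing set from $\Num(\lambda)$ down to $\Tab(\lambda)$ using row-symmetry of the Young symmetrizer.

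First I would write any element of $\calM_T$ as a $\bbC$-linear combination of elements $\pi \cdot a_T$ for $\pi \in \fS_n$. Given such a $\pi$, put $S := \pi \cdot T$; then $\pi = \sigma_{T,S}$ by definition of $\sigma_{T,S}$, and applying \eqref{eqn.abcproperties} yields
\[
\pi \cdot a_T = \sigma_{T,S} \, a_T = a_S \, \sigma_{T,S}.
\]
This shows $\calM_T = \mathrm{span}\{ a_S \sigma_{T,S} \mid S \in \Num(\lambda) \}$, i.e. the proposition with $\Num(\lambda)$ in place of $\Tab(\lambda)$.

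Next I would collapse this spanning set onto $\Tab(\lambda)$. For any $S \in \Num(\lambda)$, let $S' \in \Tab(\lambda)$ be the unique numbering with increasing rows having the same row-sets as $S$; then $S' = \rho \cdot S$ for some $\rho \in R(S)$, and since $\rho$ preserves each row as a set we get $R(S') = \rho R(S) \rho^{-1} = R(S)$ and hence $a_{S'} = a_S$. From $\sigma_{T,S'} \cdot T = S' = \rho \sigma_{T,S} \cdot T$ we obtain $\sigma_{T,S'} = \rho\, \sigma_{T,S}$. Combining these two facts with the right-invariance $a_S \rho = a_S$ (which holds because right multiplication by $\rho \in R(S)$ permutes the summands of $a_S = \sum_{\tau \in R(S)} \tau$), I would conclude
\[
a_{S'}\, \sigma_{T,S'} = a_S \, \rho \, \sigma_{T,S} = a_S \, \sigma_{T,S},
\]
so every generator is already of the claimed form, proving the proposition.

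The only delicate point is bookkeeping between left and right actions: the identity $a_S \rho = a_S$ requires $\rho \in R(S)$ (not $R(T)$), and the relation $\sigma_{T,S'} = \rho\, \sigma_{T,S}$ must be derived by applying the permutations to $T$ rather than swapping the subscripts carelessly. Once these are handled, no further computation is needed, and linear independence (which the proposition does not assert) would follow separately by comparing cardinalities $|\Tab(\lambda)| = n!/\prod \lambda_i! = \dim \calM_\lambda$.
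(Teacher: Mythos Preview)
Your proof is correct and follows essentially the same approach as the paper. The paper's own justification (stated just before the proposition, with no separate proof) invokes the identity $\sigma_{T,S}a_T = a_S\sigma_{T,S}$ together with the left invariance $\rho a_T = a_T$ for $\rho\in R(T)$, which amounts to choosing coset representatives $\sigma_{T,S'}$ with $S'\in\Tab(\lambda)$ directly; you instead pass first through all of $\Num(\lambda)$ and then collapse using the right invariance $a_S\rho = a_S$ for $\rho\in R(S)$, but this is a cosmetic reordering of the same argument.
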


\begin{definition}
  For any numberings $S$ and $T$ of shape $\lambda$, let
  \[
    v_T^S = \sigma_{T,S}b_Ta_T = b_Sa_S\sigma_{T,S}\in \calM_T.
  \]
\end{definition}

\begin{lemma}
  \label{lem.STU}
  Let $S,T,U$ be numberings of shape $\lambda$, and suppose $U=\pi\cdot T$ where $\pi\in\fS_n$.
  Then $v_U^S = v_T^S \pi^{-1}$.
\end{lemma}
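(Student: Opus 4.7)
The plan is to prove Lemma~\ref{lem.STU} by a direct chain of substitutions using the two identities already recorded in the excerpt, namely~\eqref{eqn.abcproperties} governing how $a_T, b_T, c_T$ behave under conjugation, and~\eqref{eqn.sigmaproperties} governing how the permutations $\sigma_{T,S}$ behave under left multiplication of the source numbering.

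First I would unpack the left-hand side. By definition,
\[
  v_U^S = \sigma_{U,S}\, b_U\, a_U,
\]
and since $U = \pi \cdot T$, the identity~\eqref{eqn.abcproperties} gives $b_U = \pi b_T \pi^{-1}$ and $a_U = \pi a_T \pi^{-1}$. Multiplying these, the inner $\pi^{-1}\pi$ cancels to produce
\[
  b_U a_U = \pi\, b_T a_T\, \pi^{-1}.
\]

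Next I would rewrite $\sigma_{U,S}$ in terms of $\sigma_{T,S}$. Applying~\eqref{eqn.sigmaproperties} to $U = \pi \cdot T$ yields $\sigma_{U,S}^{-1} = \sigma_{\pi\cdot T, S}^{-1} = \pi\, \sigma_{T,S}^{-1}$, and inverting gives
\[
  \sigma_{U,S} = \sigma_{T,S}\, \pi^{-1}.
\]

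Finally, substituting both expressions back into $v_U^S$ produces
\[
  v_U^S = \bigl(\sigma_{T,S}\,\pi^{-1}\bigr)\bigl(\pi\, b_T a_T\, \pi^{-1}\bigr) = \sigma_{T,S}\, b_T a_T\, \pi^{-1} = v_T^S\, \pi^{-1},
\]
where the last equality is the first definition of $v_T^S$. There is no real obstacle here; the only thing to be careful about is keeping the order of composition straight, since both $\sigma_{U,S}$ and $b_U a_U$ pick up factors of $\pi$ on different sides, and the proof works precisely because the rightmost $\pi^{-1}$ from the $\sigma$-identity cancels the leftmost $\pi$ from the conjugation identity.
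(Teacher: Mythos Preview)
Your proof is correct and follows essentially the same approach as the paper's own argument: both start from the definition $v_U^S = \sigma_{U,S}\,c_U$, use~\eqref{eqn.sigmaproperties} to rewrite $\sigma_{U,S} = \sigma_{T,S}\pi^{-1}$, use~\eqref{eqn.abcproperties} to rewrite $c_U = \pi c_T \pi^{-1}$, and then cancel the inner $\pi^{-1}\pi$. The only cosmetic difference is that the paper works with the product $c_T = b_T a_T$ directly rather than handling $b_U$ and $a_U$ separately.
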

\begin{proof}
  We have
  \begin{align*}
    v_U^S &= v_{\pi\cdot T}^S = \sigma_{\pi\cdot T, S} \,c_{\pi \cdot T}\\
    &= (\sigma_{\pi\cdot T,S}^{-1})^{-1}\,c_{\pi\cdot T} = (\pi\sigma_{T,S}^{-1})^{-1}\,c_{\pi\cdot T} &\hbox{by Equation~\eqref{eqn.sigmaproperties}}\\
    &= \sigma_{T,S}\, \pi^{-1}c_{\pi\cdot T} = \sigma_{T,S}\, c_{T}\pi^{-1} &\hbox{by Equation~\eqref{eqn.abcproperties}}\\
    &= \sigma_{T,S}b_{T} a_{T}\pi^{-1}\\
    &= v_T^S\pi^{-1}.
  \end{align*}
\end{proof}

\begin{proposition}
  \label{prop.STU}
  Let $S,T,U$ be numberings of shape $\lambda$.
  \begin{enumerate}
  \item[(a)]
    If $U=\pi\cdot T$ where $\pi\in R(T)$, then $v_U^S = v_T^S$.
  \item[(b)]
    If $U$ can be obtained from $T$ by permuting two rows of length $\ell$, then $v_U^S= (-1)^\ell v_T^S$.
  \end{enumerate}
\end{proposition}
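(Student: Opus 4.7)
The plan is to apply Lemma~\ref{lem.STU} in both parts, which reduces the problem to analyzing $c_T \pi^{-1}$, where $c_T = b_T a_T$. Since $v_T^S = \sigma_{T,S}\, c_T$, the lemma gives $v_U^S = \sigma_{T,S}\, c_T \pi^{-1}$, so part (a) amounts to showing $c_T \pi^{-1} = c_T$ and part (b) to showing $c_T \pi^{-1} = (-1)^\ell c_T$.

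For part (a), I would simply reindex: from $a_T = \sum_{\rho' \in R(T)} \rho'$ and the fact that $R(T)$ is closed under multiplication, right-multiplying by $\pi^{-1} \in R(T)$ just permutes the summands and returns $a_T$. Hence $c_T \pi^{-1} = b_T (a_T \pi^{-1}) = c_T$, and the conclusion follows.

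For part (b), the crux is to recognize the row-swap permutation $\pi$ as a signed element of the column group. Interchanging two rows of the same length $\ell$ is the product of $\ell$ disjoint transpositions, one for each of the $\ell$ columns they share, and each such transposition lies in $C(T)$. Thus $\pi \in C(T)$ with $\mathrm{sgn}(\pi) = (-1)^\ell$. Moreover, the partition of $[n]$ into rows is unchanged by the swap, so $R(U) = R(T)$ and therefore $a_U = a_T$. Using the conjugation identity $a_T \pi^{-1} = \pi^{-1} a_{\pi \cdot T} = \pi^{-1} a_U = \pi^{-1} a_T$ (a consequence of Equation~\eqref{eqn.abcproperties}, obtained by reindexing the sum defining $a_T$), I can slide $\pi^{-1}$ past $a_T$, and then the standard sign rule $b_T \eta = \mathrm{sgn}(\eta)\, b_T$ for $\eta \in C(T)$ yields $c_T \pi^{-1} = b_T \pi^{-1} a_T = (-1)^\ell b_T a_T = (-1)^\ell c_T$.

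The only step requiring genuine care is the identification of the row-swap permutation as an element of $C(T)$ with sign $(-1)^\ell$; once this is established, the remainder is a short computation in $\bbC[\fS_n]$ using the conjugation properties of Young symmetrizers already recorded in~\eqref{eqn.abcproperties}.
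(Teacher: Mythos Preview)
Your proof is correct and follows the same approach as the paper: both parts invoke Lemma~\ref{lem.STU} and then analyze $c_T\pi^{-1}$, using $a_T\pi^{-1}=a_T$ for (a) and the factorization of the row-swap into $\ell$ column transpositions for (b). You are in fact more explicit than the paper on one point: the paper writes $v_T^S\,\tau_\ell\cdots\tau_1=(-1)^\ell v_T^S$ without justifying how the $\tau_j$ (which lie in $C(T)$, not $R(T)$) get past $a_T$; your argument that $R(U)=R(T)$ forces $a_U=a_T$ and hence $a_T\pi^{-1}=\pi^{-1}a_T$ supplies exactly the missing commutation step.
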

\begin{proof}
  In both cases, we make use of Lemma~\ref{lem.STU}.
  In the first case, $v_U^S = v_T^S \pi^{-1} = v_T^S$, since $\pi^{-1}\in R(T)$.
  In the second case, suppose $U= \tau_1 \cdots \tau_\ell T$, where $\tau_j\in \fS_n$ is the transposition that exchanges the entries in the two rows of interest in the $j$-th column of $T$.
  For each $j$, we have $\tau_j\in C(T)$, so $v_U^S = v_T^S \tau_\ell \cdots \tau_1 = (-1)^\ell v_T^S$.
\end{proof}

\begin{definition}
  The {\em $q$-degree zero Specht module $\calS_T$ associated to the numbering $T$} is cyclically generated by the Young symmetrizer $c_T = b_Ta_T$:
  \[
    \calS_T = \bbC[\fS_n]\cdot c_T,
  \]
  and $\calS_T \cong \calS_\lambda$.
\end{definition}

\begin{remark}
  By Proposition~\ref{prop.STU} we may assume without loss of generality that the Specht modules $\calS_T$ which will appear in our computations are indexed by numberings $T$ which are increasing along each row, and furthermore, if two rows of $T$ have the same length, then we may assume that the rows are ordered so that the row with a smaller element in the first column lies above the row with a larger element in the first column.
  See Appendix~\ref{app.K3} for an example.
\end{remark}

\begin{proposition}[{\cite[Section 7.2 Proposition 2]{Ful97}}]
  \label{prop.Sbasis}
  Let $F$ be a spanning subgraph of $G$ with associated numbering $T$ of shape $\lambda$.
  Then
  \[
    \calS_T
    =
    \mathrm{span} \left\{ b_Sa_S \sigma_{T,S} \mid S \in \SYT(\lambda)\right\}
    =
    \mathrm{span} \left\{ v_T^S \mid S \in \SYT(\lambda)\right\},
  \]
  where $\SYT(\lambda)$ is the set of standard Young tableaux of shape $\lambda$.
\end{proposition}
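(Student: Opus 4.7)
The plan is to deduce the proposition from Fulton's classical polytabloid basis theorem for Specht modules, which is the result cited in the statement. The task is to match the two setups: here the numbering $T$ is dictated by the spanning subgraph $F$ rather than chosen canonically, so the matching involves tracking the right-twist by $\sigma_{T,S}$ carefully.

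First, I would verify that $\calS_T$ is spanned by the elements $\{v_T^S \mid S \in \Num(\lambda)\}$ over \emph{all} numberings of shape $\lambda$. Since $\calS_T = \bbC[\fS_n]\cdot c_T$, an arbitrary element is a linear combination of elements $\pi\cdot c_T$ for $\pi \in \fS_n$. Setting $S = \pi\cdot T$, so that $\sigma_{T,S} = \pi$, Equation~\eqref{eqn.abcproperties} yields
\[
  \pi\cdot c_T \;=\; \pi\, b_T\, a_T \;=\; b_{\pi T}\, a_{\pi T}\, \pi \;=\; b_S\, a_S\, \sigma_{T,S} \;=\; v_T^S .
\]

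Next, I would identify each $v_T^S$ with a classical polytabloid. Viewed inside $\calM_T$ with the basis from Proposition~\ref{prop.Mbasis}, the element $a_S\sigma_{T,S}$ represents the tabloid $\{S\}$, and $v_T^S = b_S\cdot(a_S\sigma_{T,S})$ is precisely the polytabloid built from $S$. The classical Garnir relations (which depend only on the columns of $S$) then rewrite each such polytabloid as a signed sum of those indexed by standard tableaux, so the standard $v_T^S$ span $\calS_T$. Linear independence of this family is inherited from Fulton via the evident $\fS_n$-equivariant isomorphism $\calS_T \cong \calS_\lambda$ carrying $c_T$ to the classical generator, which sends each $v_T^S$ to the corresponding standard polytabloid.

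The main obstacle, and the reason one cannot simply quote Fulton verbatim, is the bookkeeping of the twist $\sigma_{T,S}$: since $T$ is not the canonical filling, every classical identity for Young symmetrizers, each Garnir relation, and the linear independence argument must be conjugated through $\sigma_{T,S}$ before it can be applied. Equations~\eqref{eqn.abcproperties}--\eqref{eqn.sigmaproperties} together with Proposition~\ref{prop.STU} furnish exactly the algebraic machinery required to carry out these conjugations systematically, and once they are in place the proof collapses onto Fulton's.
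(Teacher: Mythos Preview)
The paper does not supply its own proof of this proposition; it is stated as a direct citation of Fulton~\cite[Section~7.2, Proposition~2]{Ful97}, with no accompanying argument. Your proposal is correct and amounts to making explicit the adaptation from Fulton's canonical setting (where the indexing numbering is fixed once and for all) to the present one where $T$ is determined by the spanning subgraph~$F$: you show that the $\fS_n$-equivariant isomorphism $\calS_T \to \calS_\lambda$ given by right-untwisting by $\sigma_{T,S}$ carries each $v_T^S$ to the classical polytabloid indexed by~$S$, whence spanning and independence transfer verbatim. This is precisely what the citation is meant to convey, so your write-up and the paper's treatment agree in substance; you have simply unpacked the bookkeeping that the paper leaves implicit.
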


\begin{remark}
  It will be helpful in the following sections to remember that the subscript $T$ in $v_T^S$ indexes the copy of the module $\calS_T$ that $v_T^S$ belongs to.
  The superscript $S$ indexes an element in the basis we have chosen for $\calS_T$.
\end{remark}

\begin{example}
  \label{eg.forappA}
  Let $n=3$ and for $i=1,2$, let $s_i\in \fS_3$ denote the transposition that exchanges $i$ and $i+1$.
  Then $\fS_3 = \langle s_1, s_2 \mid s_1^2=e, s_2^2=e, s_1s_2s_1=s_2s_1s_2\rangle$.
  Let
  \[
    \ytableausetup{notabloids}
    X_1 = \ytableaushort{12,3},\qquad
    X_2 = \ytableaushort{13,2},\qquad
    X_3 = \ytableaushort{23,1}.
  \]
  For example, the permutation module indexed by the numbering $X_3$ is generated by $a_{X_3}= e+s_2$ and has basis
  \[
    \calM_{X_3} = \bbC[\fS_3]  \cdot (e+s_2) = \mathrm{span}\{e+s_2, s_1 + s_1s_2, s_2s_1 + s_2s_1s_2 \},
  \]
  and the Specht submodule generated by $c_{X_3} = (e-s_1)(e+s_2)$ has basis
  \[
    \calS_{X_3} = \bbC[\fS_3] \cdot c_{X_3} = \mathrm{span}\left\{v_{X_3}^{X_1}, v_{X_3}^{X_2} \right\},
  \]
  where
  \begin{align*}
    v_{X_3}^{X_1} &= \sigma_{X_3,X_1} c_{X_3} = s_2s_1 (e-s_1)(e+s_2) = (e-s_2s_1s_2)(e+s_1)s_2s_1 = c_{X_1} s_2s_1,\\
    v_{X_3}^{X_2} &= \sigma_{X_3,X_2} c_{X_3} = s_1 (e-s_1)(e+s_2) = (e-s_1)(e+s_2s_1s_2)s_1 = c_{X_2}s_1.\\
  \end{align*}
\end{example}

\subsection{Computation of \texorpdfstring{$H_1(G;\bbC)$}{H1(G;C)}}
\label{sec.restrict}
In this section, we give an overview of the kinds of computations that will appear in the later sections.
Specifically in Section~\ref{sec.pairs}, we will compute the ($q$-degree zero) homology of certain graphs on $n=6$ vertices in homological degree one, restricted to the Specht module of isomorphism type $\lambda = (2^2)$ or $(2^2,1^2)$.
Since this is the case across all three examples exhibited in Section~\ref{sec.pairs}, it will be worthwhile to first discuss how to make these computations in slightly greater generality and for all $n$.

\subsubsection{Restriction to Specht modules}
We show how to compute $H_{1}(G;\bbC)$ restricted to Specht modules of type $\lambda = (2^k,1^{n-2k})$ for $k\geq1$, by choosing appropriate cyclic generators.

We will be computing
\[
  \xymatrix{C_2(G)\big|_{\calS_\lambda} \ar[r]^{d_{2}} & C_{1}(G)\big|_{\calS_\lambda} \ar[r]^{d_{1}} & C_{0}(G)\big|_{\calS_\lambda} \ar[r]^-{} & 0}.
\]

In homological degree zero, there is only one spanning subgraph without edges.
The chain group $C_0(G) = \calM_{F_{\emptyset}} \cong \M{1^n}$ is the regular represention of $\fS_n$,
where $F_\emptyset$ is the edgeless subgraph.
The multiplicity of $\calS_\lambda$ in $\bbC[\fS_n]$ is the number $f^\lambda = K_{\lambda, (1^n)}$ of standard Young tableaux of shape $\lambda$.
We list the tableaux $Y_1,\ldots, Y_{f^\lambda}$ in $\SYT(\lambda)$ with respect to the total order defined in Definition~\ref{def.totalorder}, and
\[
  C_{0}(G)\big|_{\calS_\lambda} = \bigoplus_{i=1}^{f^\lambda} \bbC[\fS_n]\cdot v_{Y_i}^{Y_1}.
\]

In homological degree one, there are $m=|E(G)|$ spanning subgraphs with exactly one edge, so $C_1(G) =\bigoplus_{i=1}^m \calM_{F_{e_i}}$.
If $F_{e_i}$ is the spanning subgraph containing the edge $e_i=(p,q)$, then the permutation module $\calM_{F_{e_i}}$ has the associated numbering $T(F_{e_i})$ of shape $\mu = (2,1^{n-2})$, and $\calM_{F_{e_i}} = \bbC[\fS_n]\cdot (e + (p\,q))\cong \calM_\mu$.

The multiplicity of $\calS_\lambda \in \calM_{F_{e_i}}$ is the number $K_{\lambda,\mu}$ of semistandard Young tableaux of shape $\lambda$ and weight $\mu$.
We next obtain numberings of shape $\lambda$ that will index these $K_{\lambda,\mu}$ Specht modules $\calS_\lambda$, by standardizing the set $\SSYT(\lambda,\mu)$ of semistandard Young tableaux of shape $\lambda$ and weight $\mu$ with respect to $T(F_{e_i})$ in the following way.
For any numbering $S$, the {\em word $w(S)$ of $S$} is obtained by reading the entries of the rows of $S$ from left to right, and from the top row to the bottom row (note that this is not the usual definition of a reading word for tableaux).
So, given $Y\in \SSYT(\lambda,\mu)$, let $w(Y) = y_1,\ldots, y_n$ be the word of $Y$, and let $w(T) = t_1,\ldots, t_n$ be the word of $T=T(F_{e_i})$.
From this we obtain a numbering $X$ of shape $\lambda$ by replacing the entry in $Y$ that corresponds to $y_k$ by $t_k$.
We list the numberings $X_{i}^1,\ldots, X_{i}^{K_{\lambda,\mu}}$ obtained by standardizing $\SSYT(\lambda,\mu)$ with respect to $T(F_{e_i})$, in the total order of Definition~\ref{def.totalorder}.

Observe that since $\mu = (2,1^{n-2})$ and $\lambda= (2^k,1^{n-2k})$ where $k\geq1$, then this standardization procedure guarantees that the first row of each numbering $X_{i}^{j}$ is $\ytableaushort{pq}$.
So $v_{X_i^j}^{Y_1}\in \calM_{F_{e_i}}$ and $\bbC[\fS_n]\cdot v_{X_i^j}^{Y_1} \cong \calS_\lambda$ for $j=1,\ldots, K_{\lambda,\mu}$.
Thus
\[
  C_1(G)\big|_{\calS_\lambda} =  \bigoplus_{i=1}^m \bigoplus_{j=1}^{K_{\lambda,\mu}} \bbC[\fS_n]\cdot v_{X_i^j}^{Y_1}.
\]

Lastly, we consider the chain module in homological degree two.
The subgraphs of $G$ with exactly two edges have connected components of partition type $(2^2,1^{n-4})$ or $(3,1^{n-3})$.
We are only concerned with Specht modules of type $\lambda = (2^k,1^{n-2k})$ with $k\geq2$ necessarily, and since $\lambda \not\!\rhd (3,1^{n-3})$, then $\calS_\lambda$ does not appear as a summand in a permutation module isomorphic to $\calM_{(3,1^{n-3})}$.
Hence, we only need to consider the spanning subgraphs with connected components of partition type $(2^2,1^{n-4})$.

So suppose $G$ has $h$ spanning subgraphs whose connected components has partition type $\nu=(2^2,1^{n-4})$.
List these spanning subgraphs with respect to the lexicographic order of their edge sets.
Suppose $F_{e_i,e_j}$ is the $k$-th such spanning subgraph and it contains the edges $e_i=(p,q)$ and $e_j=(r,s)$ with $p<r$.
The permutation module $\calM_{F_{e_i,e_j}}$ has the associated numbering $T(F_{e_i,e_j})$ of shape $\nu$, and $\calM_{F_{e_i,e_j}}= \bbC[\fS_n]\cdot (e+(p\,q))(e+(r\,s))\cong \calM_\nu$.

Similar to the previous case for $C_1(G)$, the multiplicity of $\calS_\lambda\in \calM_{F_{e_i,e_j}}$ is $K_{\lambda,\nu}$.
We list the numberings $W_k^1,\ldots, W_k^{K_{\lambda,\nu}}$ obtained by standardizing $\SSYT(\lambda,\nu)$ with respect to $T(F_{e_i,e_j})$, in the total order of Definition~\ref{def.totalorder}.
The standardization procedure guarantees that the top two rows of each numbering $W$ are $\ytableaushort{pq,rs}$, so $v_{W}^{Y_1}\in \calM_{F_{e_i,e_j}}$, and $\bbC[\fS_n]\cdot  v_{W_k^\ell}^{Y_1}\cong \calS_\lambda$ for $\ell=1,\ldots, K_{\lambda,\nu}$.
Thus
\[
  C_2(G)\big|_{\calS_\lambda} =  \bigoplus_{k=1}^h \bigoplus_{\ell=1}^{K_{\lambda,\nu}} \bbC[\fS_n]\cdot v_{W_{k}^{\ell}}^{Y_1}.
\]

\begin{example}
  \label{eg.210}
  Consider the following spanning subgraphs $F$, $F'$, and $F''$.
  \begin{center}
    \begin{tikzpicture}[scale=.5]
      \begin{scope}[xshift=0]
        \node at (-1,-1.2){$F$};
        \vertex[fill, label=above:\tiny{$1$}](v1) at (-2,2){};
        \vertex[fill, label=below:\tiny{$2$}](v2) at (-2,0){};
        \vertex[fill, label=below:\tiny{$3$}](v3) at (0,0){};
        \vertex[fill, label=above:\tiny{$4$}](v4) at (0,2){};
        \vertex[fill, label=above:\tiny{$5$}](v5) at (1.73,1){};
        \vertex[fill, label=left:\tiny{$6$}](v6) at (0.577,1){};
        \draw (v4)--(v5);
        \draw (v6)--(v3);
      \end{scope}
      \begin{scope}[xshift=160, yshift=27]
        \node[label=above:{$d_{\varepsilon(F,F')}$}] at (0,0) {$\xrightarrow{\hspace*{1.8cm}}$};
      \end{scope}
      \begin{scope}[xshift=320]
        \node at (-1,-1.2){$F'$};
        \vertex[fill, label=above:\tiny{$1$}](v1) at (-2,2){};
        \vertex[fill, label=below:\tiny{$2$}](v2) at (-2,0){};
        \vertex[fill, label=below:\tiny{$3$}](v3) at (0,0){};
        \vertex[fill, label=above:\tiny{$4$}](v4) at (0,2){};
        \vertex[fill, label=above:\tiny{$5$}](v5) at (1.73,1){};
        \vertex[fill, label=left:\tiny{$6$}](v6) at (0.577,1){};
        \draw (v6)--(v3);
      \end{scope}
      \begin{scope}[xshift=480, yshift=27]
        \node[label=above:{$d_{\varepsilon(F',F'')}$}] at (0,0) {$\xrightarrow{\hspace*{1.8cm}}$};
      \end{scope}
      \begin{scope}[xshift=640]
        \node at (-1,-1.2){$F''$};
        \vertex[fill, label=above:\tiny{$1$}](v1) at (-2,2){};
        \vertex[fill, label=below:\tiny{$2$}](v2) at (-2,0){};
        \vertex[fill, label=below:\tiny{$3$}](v3) at (0,0){};
        \vertex[fill, label=above:\tiny{$4$}](v4) at (0,2){};
        \vertex[fill, label=above:\tiny{$5$}](v5) at (1.73,1){};
        \vertex[fill, label=left:\tiny{$6$}](v6) at (0.577,1){};
      \end{scope}
    \end{tikzpicture}
  \end{center}
  The numberings associated to the subgraphs are
  \[
    T(F) = \ytableaushort{36,45,1,2}\,,
    \qquad
    T(F') = \ytableaushort{36,1,2,4,5}\,,
    \qquad\hbox{and}\qquad
    T(F'') = \ytableaushort{1,2,3,4,5,6},
  \]
  and the permutation modules associated to the subgraphs are
  \begin{align*}
    \calM_F &= \bbC[\fS_n]\cdot (e+(36))(e+(45)) \cong \M{2^2,1^2},\\
    \calM_{F'} &= \bbC[\fS_n]\cdot (e+(36)) \cong \M{2,1^4},\\
    \calM_{F''} &= \bbC[\fS_n] \cong \M{1^6}.
  \end{align*}
  Let $\lambda=(2^3)$.
  When $\mu = (2^2,1^2)$, $K_{\lambda,\mu}=1$ and the unique semistandard Young tableau of shape $\lambda$ and weight $\mu$ is
  \[
    \ytableaushort{11,22,34}\,,
    \hbox{ which is standardized to }
    \ytableaushort{36,45,12}\,.
  \]
  When $\mu=(2,1^4)$, $K_{\lambda,\mu}=2$ and the two semistandard Young tableaux of shape $\lambda$ and weight $\mu$ are
  \[
    \ytableaushort{11,23,45}\ \hbox{ and }\ \ytableaushort{11,24,35}\,,
    \hbox{ which are standardized to }
    \ytableaushort{36,12,45}\ \hbox{ and }\ \ytableaushort{36,14,25}\,.
  \]
  And when $\mu = (1^6)$, there are $K_{\lambda,\mu} = 5$ standard Young tableaux of shape $\lambda$
  \[
    Y_1=\ytableaushort{12,34,56}\,,\
    Y_2=\ytableaushort{13,24,56}\,,\
    Y_3=\ytableaushort{12,34,46}\,,\
    Y_4=\ytableaushort{13,25,46}\,,\
    Y_5=\ytableaushort{14,25,36}\,.
  \]
  The Specht submodules of type $\lambda$ in these permutation modules are generated by
  \[
    \ytableausetup{smalltableaux}
    \calM_F  \supseteq \bbC[\fS_6] \cdot v_{\,\scalebox{.75}{\ytableaushort{36,45,12}}}^{Y_1}\,,
    \qquad
    \calM_{F'} \supseteq \bbC[\fS_6] \cdot v_{\,\scalebox{.75}{\ytableaushort{36,12,45}}}^{Y_1} \oplus \bbC[\fS_6] \cdot v_{\,\scalebox{.75}{\ytableaushort{36,14,25}}}^{Y_1}\,,
    \qquad
    \calM_{F''} \supseteq \bigoplus_{i=1}^5 \bbC[\fS_6]\cdot v_{Y_i}^{Y_1}.
  \]
\end{example}

To compute the edge maps $d_{\varepsilon(F,F')}$ and $d_{\varepsilon(F',F'')}$, we need to describe some straightening laws for numberings.

\begin{remark}
  For some values of $\mu$ and $\lambda\vdash n$, it is less obvious how one should uniformly choose generators for Specht modules.
  For instance, consider the following example:
  \begin{center}
    \begin{tikzpicture}[scale=.5]
      \begin{scope}[xshift=0]
        \node at (-4,1){$F$};
        \vertex[fill, label=above:\tiny{$1$}](v1) at (-2,2){};
        \vertex[fill, label=below:\tiny{$2$}](v2) at (-2,0){};
        \vertex[fill, label=below:\tiny{$3$}](v3) at (0,0){};
        \vertex[fill, label=above:\tiny{$4$}](v4) at (0,2){};
        \draw (v1)--(v2);
        \draw (v3)--(v4);
      \end{scope}
      \begin{scope}[xshift=20, yshift=30]
        \draw [->, >=stealth] (0,0)--(5,2);
        \draw [->, >=stealth] (0,0)--(5,-2);
        \node at (2.5,2){$-d_{\varepsilon(F,F')}$};
        \node at (2.5,-1.9){$d_{\varepsilon(F,F'')}$};
      \end{scope}
      \begin{scope}[xshift=250, yshift=75]
        \node at (2,1){$F'$};
        \vertex[fill, label=above:\tiny{$1$}](v1) at (-2,2){};
        \vertex[fill, label=below:\tiny{$2$}](v2) at (-2,0){};
        \vertex[fill, label=below:\tiny{$3$}](v3) at (0,0){};
        \vertex[fill, label=above:\tiny{$4$}](v4) at (0,2){};
        \draw (v1)--(v2);
      \end{scope}
      \begin{scope}[xshift=250, yshift=-75]
        \node at (2,1){$F''$};
        \vertex[fill, label=above:\tiny{$1$}](v1) at (-2,2){};
        \vertex[fill, label=below:\tiny{$2$}](v2) at (-2,0){};
        \vertex[fill, label=below:\tiny{$3$}](v3) at (0,0){};
        \vertex[fill, label=above:\tiny{$4$}](v4) at (0,2){};
        \draw (v3)--(v4);
      \end{scope}
    \end{tikzpicture}
  \end{center}
  The numberings associated to the subgraphs are
  \[
    \ytableausetup{nosmalltableaux}
    T(F) = \ytableaushort{12,34}\,,
    \qquad
    T(F') = \ytableaushort{12,3,4}\,,
    \qquad
    T(F'') = \ytableaushort{34,1,2}\,,
  \]
  and the permutation modules are
  \[
    \calM_F = \bbC[\fS_4] (e+(1\,2))(e+(3\,4)),
    \quad
    \calM_{F'} = \bbC[\fS_4] (e+(1\,2)),
    \quad
    \calM_{F''} = \bbC[\fS_4] (e+(3\,4)).
  \]
  The edge maps $d_{\varepsilon(F,F')}$ and $d_{\varepsilon(F,F'')}$ are simply (signed) inclusion maps, but it is not obvious how one should construct a generator for the Specht module of type $(3,1)$ inside $\calM_F$.
  More precisely, we will show that the generator cannot be constructed simply by considering a single $v_T^S$ for some numberings $T$ (and $S$) of shape $(3,1)$.

  The Specht submodules of type $\lambda = (3,1)$ in $\calM_{F'}$ and $\calM_{F''}$ are
  \[
    \ytableausetup{smalltableaux}
    \calM_{F'} \supseteq \bbC[\fS_4] \cdot v_{\scalebox{.75}{\ytableaushort{123,4}}}^{Y_1}\,
    \oplus
    \bbC[\fS_4] \cdot v_{\scalebox{.75}{\ytableaushort{124,3}}}^{Y_1},
    \qquad
    \calM_{F''} \supseteq \bbC[\fS_4] \cdot v_{\scalebox{.75}{\ytableaushort{134,2}}}^{Y_1}\,
    \oplus
    \bbC[\fS_4] \cdot v_{\scalebox{.75}{\ytableaushort{234,1}}}^{Y_1}.
  \]
  The four generators appearing in the previous equation are essentially the only generators for Specht modules of type $(3,1)$ constructed by considering group elements of the form $v_T^S$ for some numberings $T$ and $S$ of shape $(3,1)$, and none of them are elements in $\calM_F$.
  However, via the straightening laws, we see that $v_{\scalebox{.75}{\ytableaushort{123,4}}}^{Y_1} + v_{\scalebox{.75}{\ytableaushort{124,3}}}^{Y_1} + v_{\scalebox{.75}{\ytableaushort{134,2}}}^{Y_1} + v_{\scalebox{.75}{\ytableaushort{234,1}}}^{Y_1} = 0$, and
  \[
    (e - (1\,4))(e + (2\,3)+ (1\,2\,3))(e+(1\,2))(e+(3\,4))
    =
    v_{\scalebox{.75}{\ytableaushort{123,4}}}^{Y_1} + v_{\scalebox{.75}{\ytableaushort{124,3}}}^{Y_1}
    =
    -\left(v_{\scalebox{.75}{\ytableaushort{134,2}}}^{Y_1} + v_{\scalebox{.75}{\ytableaushort{234,1}}}^{Y_1}\right)
  \]
  is a generator for the Specht module indexed by $(3,1)$ inside $\calM_F$.
\end{remark}

\subsection{Straightening laws}
We describe straightening laws which are parallel to a dual version of those given in Fulton~\cite[Section 7.4]{Ful97} for oriented column tabloids.
In this section, all $\fS_n$-modules are {\em right} $\fS_n$-modules.

Let $\widetilde\calM^T$ denote the right $\fS_n$-submodule of $\bbC[\fS_n]$ generated by $a_T$.
For each numbering $S\in \Num(\lambda)$, $a_T \sigma_{S,T} = \sigma_{S,T}a_S$.  Furthermore, if $\rho\in R(T)$, then $a_T\rho = a_T$, so
\[
  \widetilde\calM^T = \mathrm{span}\{\sigma_{S,T}a_S \mid S\in \Tab(\lambda) \},
\]
where $\Tab(\lambda)$ is the set of row tabloids of shape $\lambda$.

Define an $\fS_n$-homomorphism $\Psi^T: \widetilde\calM^T \rightarrow \bbC[\fS_n]$ by $a_T \mapsto b_Ta_T$, so that
\[
  \Psi^T(\sigma_{S,T} a_S)
  =
  \Psi^T(a_T\sigma_{S,T} )
  =
  \Psi^T(a_T)\sigma_{S,T}
  =
  b_Ta_T\sigma_{S,T}
  =
  \sigma_{S,T}b_Sa_S
  =
  v_S^T.
\]

We will need the following lemma.
\begin{lemma}
  \label{lem.right_straighten_ba}
  For any numberings $S$ and $T$ of shape $\lambda$, $i=1,\ldots, \ell(\lambda)-1$ and $j=1,\ldots, \lambda_{i+1}$, let $Q$ be a nonempty subset of elements in the $(i+1)$-st row of $S$.
  Define
  \[
    \gamma_Q^T(S) = \sum_{U\in \Xi_{i,Q}(S)} \sigma_{U,T}a_U,
  \]
  where $\Xi_{i,Q}(S)$ is the set of all numberings $U$ obtained from $S$ by exchanging (possibly empty) subsets of $Q$ with subsets of the same size in the $i$-th row of $S$, preserving the order of each subset of elements.
  Then
  \[
    \gamma_Q^T(S) \in \ker\Psi^T.
  \]
\end{lemma}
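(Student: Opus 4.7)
The plan is to show $\Psi^T(\gamma_Q^T(S)) = b_T a_T \sum_{U\in\Xi_{i,Q}(S)} \sigma_{U,T} = 0$.  I would begin by reducing to the diagonal case $S=T$.  Each $U\in\Xi_{i,Q}(S)$ is obtained from $S$ by left-multiplication by $\mu_{P',Q'}:=\prod_{j}(p_j,q_j)$, the involution pairing $P'=\{p_1<\cdots<p_k\}$ inside row $i$ of $S$ with $Q'=\{q_1<\cdots<q_k\}\subseteq Q$ in their natural orders, so $\sigma_{U,T}=\sigma_{S,T}\,\mu_{P',Q'}$.  The conjugation identities $a_T\sigma_{S,T}=\sigma_{S,T}a_S$ and $b_T\sigma_{S,T}=\sigma_{S,T}b_S$ coming from Equation~\eqref{eqn.abcproperties} then reduce the lemma to proving $b_S a_S\widetilde\Sigma_S = 0$, where $\widetilde\Sigma_S:=\sum_{P',Q'}\mu_{P',Q'}$ is the ``shuffle sum'' built directly from $S$.

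Assuming $S=T$ and writing $\Sigma:=\widetilde\Sigma_T$, let $P$ and $R$ denote rows $i$ and $i+1$ of $T$, and set $Q'':=R\setminus Q$.  The first combinatorial input is that $\{\mu_{P',Q'}\}$ is a right transversal of $\fS_P\times\fS_Q$ inside $\fS_{P\cup Q}$:  distinct pairs $(P',Q')$ produce distinct preimages $\mu^{-1}(P)=(P\setminus P')\cup Q'$, and these exhaust the $\binom{|P|+|Q|}{|P|}$ subsets of $P\cup Q$ of cardinality $|P|$.  Consequently $a_P a_Q\,\Sigma = a_{P\cup Q}$, where $a_{P\cup Q}=\sum_{\pi\in\fS_{P\cup Q}}\pi$.

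Next I would factor $a_T = a_P\,a_R\,a''$, with $a''$ the product of the remaining row symmetrizers, and further decompose $a_R = \bigl(\sum_\omega \omega\bigr)\,a_Q a_{Q''}$ via a left transversal $\{\omega\}$ of $\fS_Q\times\fS_{Q''}$ in $\fS_R$.  The commutations $[a_P,\omega] = [a'',\Sigma] = [a_{Q''},\Sigma] = 0$, each a consequence of disjoint supports, collapse everything to
\[
a_T\,\Sigma \;=\; \Bigl(\sum_\omega \omega\Bigr)\,a_{P\cup Q}\,a_{Q''}\,a''.
\]
For each $\omega\in\fS_R$, the fact that $\omega$ fixes $P$ pointwise yields $\omega\,a_{P\cup Q} = a_{P\cup\omega(Q)}\,\omega$.

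The final and central step is to prove $b_T\,a_{P\cup\omega(Q)}=0$ for every $\omega$; this is where the partition hypothesis $\lambda_i\ge\lambda_{i+1}$ is essential.  Pick any $q'\in\omega(Q)\subseteq R$, say at position $(i+1,c)$ of $T$; since $c\le\lambda_{i+1}\le\lambda_i$, the entry $p':=T(i,c)\in P$ is defined, so $(p',q')\in C(T)$.  The symmetry $(p',q')\,a_{P\cup\omega(Q)}=a_{P\cup\omega(Q)}$ combined with $b_T(p',q')=-b_T$ forces $b_T\,a_{P\cup\omega(Q)} = -b_T\,a_{P\cup\omega(Q)}$, which vanishes over $\bbC$.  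Summing over $\omega$ gives $b_T a_T \Sigma = 0$.  I expect the main obstacle to be verifying the transversal description of $\Sigma$ cleanly, together with the bookkeeping of the two coset decompositions; once these are in place, the column-transposition identity is the short geometric punchline that closes the argument.
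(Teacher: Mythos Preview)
Your argument is correct and shares the same punchline as the paper's proof---namely, that a transposition lying simultaneously in $C(T)$ and in the symmetric group on $P\cup(\text{some shuffle of }Q)$ forces the relevant product with $b_T$ to vanish. The organization of the algebra differs, however. The paper first symmetrizes $\gamma_Q^T(S)$ over all of $G=\fS_{P\cup Q}$ (picking up a harmless factor $|H|$), then expands $a_T=\sum_{\rho\in R(T)}\rho$, pushes $b_T$ past $\rho$ to get $\rho\,b_{\rho^{-1}\cdot T}$, and for each fixed $\rho$ factors the sum over $G$ through a transposition $\tau_\rho\in G\cap C(\rho^{-1}\cdot T)$. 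Your approach instead avoids the full symmetrization over $G$ by factoring $a_T$ along the row structure: you isolate $a_P a_Q$ to absorb $\Sigma$ into $a_{P\cup Q}$, and track only the residual left transversal $\omega$ of $\fS_Q\times\fS_{Q''}$ in $\fS_R$, arriving at $b_T\,a_{P\cup\omega(Q)}=0$. The paper's route is a bit shorter (one coset decomposition instead of two), while yours makes more explicit where the row structure enters and never needs to conjugate $b_T$. Both are clean; your explicit reduction to $S=T$ at the outset also clarifies a point the paper leaves implicit (its assertion ``$H\subseteq R(T)$'' really means $H\subseteq R(S)$, which becomes $R(T)$ only after this reduction).
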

\begin{proof}
  Let $P$ be the set of entries in the $i$-th row of $S$, and define subgroups $H \subseteq G \subseteq \fS_n$, where $G=\fS_{P\cup Q}$ is the subgroup of permutations that fixes elements outside $P\cup Q$, and $H=\fS_P\times\fS_Q$ is the subgroup of $G$ that permutes elements within each of $P$ and $Q$ respectively.

  It follows from the definition that $\gamma_Q^T(S) = \sum_{\nu}a_T \cdot \nu$, where the sum is over a transversal of the cosets $H\backslash G$.
  And since $H\subseteq R(T)$, then $a_T\cdot \eta = a_T$ for every $\eta \in H$, thus
  \[
    \sum_{\gamma \in G} a_T\cdot \gamma = |H|\, \gamma_Q^T(S).
  \]

  Under the map $\Psi^T$, the left hand side of the above equation is
  \[
    \sum_{\gamma \in G} b_Ta_T\gamma
    =
    \sum_{\gamma \in G} b_T \sum_{\rho\in R(T)}\rho \gamma
    =
    \sum_{\rho \in R(T)} \sum_{\gamma \in G} \rho b_{\rho^{-1}\cdot T} \gamma.
  \]
  For a fixed $\rho \in R(T)$, there exists two elements in $P\cup Q$ that are in the same column of $\rho^{-1}\cdot T$; concretely, if $j$ is the leftmost column of $\rho^{-1}\cdot T$ which contains an element of $Q$, then the $(i,j)$-th entry of $\rho^{-1}\cdot T$ is in $P$.
  Let $\tau_\rho\in G$ be the transposition which exchanges the $(i,j)$-th and $(i+1,j)$-th entries of $\rho^{-1}\cdot T$.
  Then we may factor
  \[
    \sum_{\gamma \in G} \rho b_{\rho^{-1}\cdot T} \gamma =\sum_{\xi} \rho b_{\rho^{-1}\cdot T} (e+\tau_\rho)\xi
  \]
  as a sum over a transversal of $\langle \tau_\rho\rangle\backslash G$. Moreover, $b_{\rho^{-1}\cdot T} (e+\tau_\rho)= 0$ since $\tau_\rho\in C(\rho^{-1}\cdot T)$.
  Therefore, $\sum_{\gamma \in G}a_T\cdot \gamma$, and hence $\gamma_Q^T(S)$, is in $\ker\Psi^T$.
\end{proof}

\begin{proposition}
  \label{prop.straighten}
  For any numberings $S$ and $T$ of shape $\lambda$, $i=1,\ldots, \ell(\lambda)-1$, and $j=1,\ldots, \lambda_{i+1}$, define
  \[
    \pi_{i,j}^T(S) = \sum_{U\in \Xi_{i,j}(S)} \sigma_{U,T}a_U,
  \]
  where $\Xi_{i,j}(S)$ is the set of all numberings $U$ obtained from $S$ by exchanging the first $j$ entries in the $(i+1)$-th row of $S$ with $j$ entries in the $i$-th row of $S$, preserving the order of each subset of elements.
  Then
  \[
    (-1)^j\pi_{i,j}^T(S)-\sigma_{S,T}a_S\in \ker \Psi^T.
  \]
\end{proposition}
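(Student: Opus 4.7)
The plan is to prove a slightly generalized claim by strong induction on $k$: for every $k$-subset $Q'$ of the entries in the $(i+1)$-st row of $S$ (not necessarily an initial segment) with $1 \leq k \leq \lambda_{i+1}$,
\[
(-1)^k\, \widetilde\pi_{Q'}^T(S) - \sigma_{S,T}\, a_S \in \ker \Psi^T,
\]
where I temporarily set $\widetilde\pi_{Q'}^T(S) := \sum \sigma_{U,T}\, a_U$ with the sum over all numberings $U$ obtained from $S$ by exchanging $Q'$ with a $k$-subset of the $i$-th row (preserving the order of each subset). Taking $Q' = \{q_1,\ldots,q_j\}$ to be the first $j$ entries of row $i+1$ then yields $\widetilde\pi_{Q'}^T(S) = \pi_{i,j}^T(S)$ and recovers the proposition.

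The base case $k=1$ is immediate: Lemma~\ref{lem.right_straighten_ba} applied with $Q = Q' = \{q\}$ gives $\Xi_{i,Q'}(S) = \{S\} \sqcup \{U_p : p \in \text{row } i\}$ (where $U_p$ swaps $q$ and $p$), so
\[
\gamma_{Q'}^T(S) = \sigma_{S,T}\, a_S + \widetilde\pi_{Q'}^T(S) \in \ker\Psi^T,
\]
which is the $k=1$ case of the claim.

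For the inductive step, I would apply Lemma~\ref{lem.right_straighten_ba} with $Q = Q'$ and organize $\gamma_{Q'}^T(S)$ by the size $k'$ of the exchanged subset:
\[
\gamma_{Q'}^T(S) = \sigma_{S,T}\, a_S + \sum_{k'=1}^{k-1} \sum_{\substack{Q'' \subseteq Q' \\ |Q''| = k'}} \widetilde\pi_{Q''}^T(S) + \widetilde\pi_{Q'}^T(S).
\]
By the inductive hypothesis, $\widetilde\pi_{Q''}^T(S) \equiv (-1)^{k'}\sigma_{S,T}\, a_S \pmod{\ker\Psi^T}$ for every $k' < k$ and every $k'$-subset $Q'' \subseteq Q'$; there are $\binom{k}{k'}$ such subsets. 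Substituting, using $\gamma_{Q'}^T(S) \in \ker \Psi^T$, and collecting coefficients gives
\[
\Bigl( \sum_{k'=0}^{k-1} \binom{k}{k'}(-1)^{k'} \Bigr) \sigma_{S,T}\, a_S + \widetilde\pi_{Q'}^T(S) \equiv 0 \pmod{\ker\Psi^T}.
\]
The identity $\sum_{k'=0}^{k}\binom{k}{k'}(-1)^{k'} = 0$ reduces the parenthesized coefficient to $-(-1)^k$, yielding $\widetilde\pi_{Q'}^T(S) \equiv (-1)^k\, \sigma_{S,T}\, a_S$, completing the induction.

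The main obstacle I anticipate is recognizing that the induction must be phrased for arbitrary subsets of row $i+1$ rather than just initial segments: when $\gamma_{Q_j}^T(S)$ is expanded with $Q_j = \{q_1,\ldots,q_j\}$, the intermediate sums involve subsets $Q'' \subseteq Q_j$ that are not themselves initial segments of row $i+1$, so the proposition as stated does not directly admit induction on $j$. Once the generalized claim is set up, the remainder is a routine manipulation using the lemma together with the alternating binomial identity.
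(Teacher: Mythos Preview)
Your proof is correct. Both your argument and the paper's rely on Lemma~\ref{lem.right_straighten_ba} together with the alternating binomial identity $\sum_{k'=0}^{k}(-1)^{k'}\binom{k}{k'}=0$, but the packaging differs. The paper writes $\pi_{i,j}^T(S)=f(J)$ where $f(K)$ is your $\widetilde\pi_K^T(S)$ and $g(K)=\sum_{L\subseteq K}f(L)=\gamma_K^T(S)$, and then applies M\"obius inversion over the Boolean lattice of subsets of $J$ to get $f(J)=\sum_{K\subseteq J}(-1)^{|J\setminus K|}g(K)$ in one stroke; since each $\gamma_K^T(S)$ with $K\neq\emptyset$ lies in $\ker\Psi^T$ by the lemma, only the $K=\emptyset$ term survives modulo $\ker\Psi^T$. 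Your strong induction on $|Q'|$ unwinds exactly the same inclusion--exclusion recursively, and your observation that one must generalize from initial segments to arbitrary subsets of row $i+1$ is precisely what the paper achieves by summing over \emph{all} $K\subseteq J$. The inclusion--exclusion formulation is slightly more concise; your inductive formulation is perhaps more transparent about why the generalization to arbitrary subsets is forced.
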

\begin{proof}
  Let $J$ be the first $j$ elements in the $(i+1)$-st row of $S$.
  For $K\subseteq J$, let $f(K) = \sum_{U} \sigma_{U,T}a_U$ be the sum over numberings $U$ obtained from $S$ by exchanging the elements in $K$ with subsets of size $|K|$ in the $i$-th row of $S$, preserving the order of elements in each subset, and let $g(K)=\sum_{L \subseteq K} f(L)$.
  By the Principle of Inclusion-Exclusion,
  \[
    \pi_{i,j}^T(S) = f(J) = \sum_{K \subseteq J} (-1)^{|J\backslash K|} g(K)
    =
    (-1)^j \sum_{K \subseteq J} (-1)^k \gamma_K^T(S).
  \]
  Since $g(\emptyset) = \gamma_\emptyset^T(S) = \sigma_{S,T}a_S$, then
  \[
    (-1)^j\pi_{i,j}^T(S)-\sigma_{S,T}a_S =  \sum_{\emptyset \neq K \subseteq J} (-1)^k \gamma_K^T(S).
  \]
  The result now follows from Lemma~\ref{lem.right_straighten_ba}.
\end{proof}

Proposition~\ref{prop.straighten} provides an algorithm for expressing $v_S^T$ for any numbering $S$ of shape $\lambda$ as an integral linear combination of $v_U^T$ indexed by the standard Young tableaux of shape $\lambda$.
This is summarized in the following corollary.

\begin{corollary}
  \label{cor.straighten}
  For any numbering $S$ of shape $\lambda$,
  \[
    v_S^T = (-1)^j\sum_{U\in \Xi_{i,j}(S)} v_U^T,
  \]
  where $\Xi_{i,j}(S)$ is the set of all numberings $U$ obtained from $S$ by exchanging the first $j$ entries in the $(i+1)$-th row of $S$ with $j$ entries in the $i$-th row of $S$, preserving the order of each subset of elements.
\end{corollary}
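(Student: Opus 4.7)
The plan is to observe that Corollary~\ref{cor.straighten} is essentially a restatement of Proposition~\ref{prop.straighten} after applying the homomorphism $\Psi^T$. Specifically, Proposition~\ref{prop.straighten} tells us that
\[
  (-1)^j \pi_{i,j}^T(S) - \sigma_{S,T} a_S \in \ker \Psi^T,
\]
so applying $\Psi^T$ to this element gives zero in $\bbC[\fS_n]$.

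Next I would unwind the definitions on each side. By the preceding discussion, $\Psi^T(\sigma_{U,T} a_U) = v_U^T$ for every numbering $U$ of shape $\lambda$; in particular $\Psi^T(\sigma_{S,T} a_S) = v_S^T$. Meanwhile $\pi_{i,j}^T(S) = \sum_{U \in \Xi_{i,j}(S)} \sigma_{U,T} a_U$ by definition, and since $\Psi^T$ is an $\fS_n$-homomorphism (hence linear), we have
\[
  \Psi^T\bigl(\pi_{i,j}^T(S)\bigr) = \sum_{U \in \Xi_{i,j}(S)} v_U^T.
\]
Combining these two computations and rearranging yields the stated equation
\[
  v_S^T = (-1)^j \sum_{U \in \Xi_{i,j}(S)} v_U^T.
\]

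There is essentially no obstacle here: once Proposition~\ref{prop.straighten} is in hand, the corollary is a one-line consequence of applying $\Psi^T$ and using its linearity together with the formula $\Psi^T(\sigma_{U,T} a_U) = v_U^T$. All of the real work—namely the Principle of Inclusion-Exclusion reduction to the kernel statement in Lemma~\ref{lem.right_straighten_ba}, and the column-transposition argument that produces the factor $b_{\rho^{-1}\cdot T}(e+\tau_\rho) = 0$—was already carried out in the proofs of Lemma~\ref{lem.right_straighten_ba} and Proposition~\ref{prop.straighten}.
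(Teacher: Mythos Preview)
Your proof is correct and is essentially identical to the paper's own argument: the paper simply applies $\Psi^T$ to the element $(-1)^j\pi_{i,j}^T(S)-\sigma_{S,T}a_S$ from Proposition~\ref{prop.straighten}, obtaining $\bigl((-1)^j\sum_{U\in \Xi_{i,j}(S)} v_U^T\bigr)-v_S^T = 0$. Your additional remarks about where the real work was done are accurate but not part of the proof itself.
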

\begin{proof}
  Under the map $\Psi^T$, the element
  \[
    (-1)^j\pi_{i,j}^T(S)-\sigma_{S,T}a_S \mapsto  \left((-1)^j\sum_{U\in \Xi_{i,j}(S)} v_U^T \right)-v_S^T = 0.
  \]
\end{proof}

We say that the numbering $S$ has a {\em decrease} in position $(i,j)$ if $S(i,j)>S(i+1,j)$.
If $S$ is not a standard Young tableau, then it has at least one decrease.
By applying the relation in Corollary~\ref{cor.straighten} iteratively to decreases from the bottom to the top, and from right to left, then $v_S^T$ can be expressed solely in terms of $v_U^T$ where each $U$ is a standard Young tableau.

\begin{example}
  By a small abuse of notation, we let $\pi_{i,j}$ denote the operator on numberings where $\pi_{i,j}(S) = (-1)^j \sum_{U\in \Xi_{i,j}(S)} U$ in this example.

  Let $T=\ytableaushort{123,456}$, and let $S=\ytableaushort{136,245}$.
  We first apply $\pi_{1,3}$, then followed by $\pi_{1,2}$.
  \[
    \pi_{1,2}\pi_{1,3}\ \ytableaushort{13{\,6_{\textcolor{red}\cdot}},245}
    =
    -\,\pi_{1,2}\ \ytableaushort{2{\,4_{\textcolor{red}\cdot}}5,136}
    =
    -\,\ytableaushort{135,246}-\,\ytableaushort{143,256}-\,\ytableaushort{213,456}
    \,.
  \]
  Then the straightening laws give
  \[
    \ytableausetup{smalltableaux}
    v_{\,\scalebox{.75}{\ytableaushort{136,245}}}^T
    =
    -v_{\,\scalebox{.75}{\ytableaushort{245,136}}}^T
    =
    -v_{\,\scalebox{.75}{\ytableaushort{135,246}}}^T
    -v_{\,\scalebox{.75}{\ytableaushort{134,256}}}^T
    -v_{\,\scalebox{.75}{\ytableaushort{123,456}}}^T.
  \]
\end{example}

\begin{example}
  We now finish Example~\ref{eg.210}.
  We see that when the map $d_{\varepsilon(F,F')}$ is restricted to Specht modules of type $\lambda$, it is simply the inclusion of $\bbC[\fS_6] \cdot v_{\,\scalebox{.75}{\ytableaushort{12,36,45}}}^{Y_1}$ into the first summand of the codomain.

  As for the map $d_{\varepsilon(F',F'')}$, it clearly sends $v_{\,\scalebox{.75}{\ytableaushort{14,25,36}}}^{Y_1} \mapsto v_{Y_5}^{Y_1}$.
  Lastly, by applying straightening laws, we see that $v_{\,\scalebox{.75}{\ytableaushort{12,36,45}}}^{Y_1} \mapsto - v_{Y_1}^{Y_1} - v_{Y_3}^{Y_1}$.
\end{example}

\section{Three pairs of graphs}
\label{sec.pairs}
\label{sec-three}
Amongst the set of connected graphs on $n=6$ vertices, there are nine pairs of non-isomorphic graphs whose chromatic symmetric functions are equal.
In this section, we exhibit three examples of pairs of non-isomorphic graphs, in which each pair of graphs has the same chromatic symmetric function, but has different chromatic symmetric homology (in $q$-degree zero).
Our computations follow the procedure and conventions that were outlined in Section~\ref{sec.restrict}.

\subsection{A summary of results}
We summarize the results of this section here.
The computations for each pair of graphs appearing in the next theorem are similar but lengthy, so we provide the details for performing these computations for the first pair of graphs, and defer the computations for the remaining pairs to Appendix~\ref{app.bigmatrices}.
There, the reader can also find our conjectures on the full $q$-degree zero homology of these graphs.
Please also see Appendix~\ref{app.K3} for a complete example of our methods.

\begin{theorem}
  \label{thm.stronger}
  Chromatic symmetric homology in $q$-degree zero distinguishes each of the pairs of graphs in Figures~\ref{fig.pairs_one}, \ref{fig.pairs_two} and \ref{fig.pairs_three}.
\end{theorem}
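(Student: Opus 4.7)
The plan is to reduce each of the three claimed distinctions to a finite linear-algebra computation inside a single Specht isotypic component, exploiting the machinery of Section \ref{sec.restrict}. For each pair, I would choose a partition $\lambda = (2^k, 1^{6-2k})$ with $k \geq 1$ for which the restricted homology $H_1(G;\bbC)\big|_{\calS_\lambda}$ has different dimensions for the two graphs; since both graphs share the same chromatic symmetric function (and hence the same bigraded Frobenius series specialized at $q=t=1$), such a discrepancy in one isotypic component must be compensated elsewhere, but isolating one $\lambda$ where it shows up is enough to certify that the homologies differ as $\fS_6$-modules. Based on the setup, $\lambda = (2^2, 1^2)$ and $\lambda = (2^3)$ (more accurately $(2^2)$-type patterns after accounting for fixed points) are the natural candidates because they dominate $(2, 1^4)$ and $(2^2, 1^2)$ but not $(3, 1^{n-3})$, so only spanning subgraphs whose components have partition type in $\{(1^6), (2,1^4), (2^2,1^2)\}$ contribute to the restricted complex $C_{*,0}(G)\big|_{\calS_\lambda}$ in homological degrees $0,1,2$.

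Second, I would build the restricted complex explicitly. For each graph I enumerate the edges $e_1, \ldots, e_m$ in lexicographic order, attach to each one-edge subgraph $F_{e_i}$ its canonical numbering $T(F_{e_i})$ of shape $(2, 1^{n-2})$ and the $K_{\lambda, (2,1^{n-2})}$ cyclic generators $v_{X_i^j}^{Y_1}$, and do the analogous enumeration for all two-edge subgraphs whose components have type $(2^2, 1^{n-4})$, yielding generators $v_{W_k^\ell}^{Y_1}$. The chain groups are then direct sums indexed by $(i,j)$ and $(k,\ell)$ respectively, and $C_0(G)\big|_{\calS_\lambda}$ has the explicit basis $\{v_{Y_a}^{Y_1} \mid Y_a \in \SYT(\lambda)\}$. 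The differentials, as noted after Equation~\eqref{eqn.frobrep} and unpacked in Example \ref{eg.210}, act as signed inclusions $v_T^{Y_1} \mapsto \mathrm{sgn}(\varepsilon) v_{T'}^{Y_1}$; whenever $T'$ fails to be a standard Young tableau, Corollary \ref{cor.straighten} rewrites $v_{T'}^{Y_1}$ as a signed sum over the standard basis by straightening decreases from bottom to top and right to left.

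Third, I read off the homology. Once the matrices of $d_1$ and $d_2$ (restricted to $\calS_\lambda$) are in hand with respect to the bases above, the invariant I care about is
\[
  \dim H_1(G; \bbC)\big|_{\calS_\lambda} = \dim \ker d_1 - \operatorname{rank} d_2,
\]
and by Schur's lemma this dimension equals $f^\lambda$ times the multiplicity of $\calS_\lambda$ in $H_1(G;\bbC)$. Exhibiting two graphs in a given pair whose restricted dimensions are unequal then certifies that their first chromatic symmetric homology modules are non-isomorphic, hence that $H_{*,0}$ distinguishes the pair. I would do this calculation in detail for the first pair in the body of Section \ref{sec-three} and relegate the analogous computations for the other two pairs to Appendix \ref{app.bigmatrices}, as the authors have signaled.

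The main obstacle is bookkeeping rather than any conceptual difficulty: for graphs on $n=6$ vertices with $m$ edges in the range of interest, the matrix of $d_1$ restricted to $\calS_\lambda$ has on the order of $m \cdot K_{\lambda,(2,1^4)}$ columns against $f^\lambda$ rows, and every nonzero entry is produced by a potentially long chain of straightening moves from Corollary \ref{cor.straighten}. I would organize the data so that the computation is uniform across the pair (same $\lambda$, same standard basis $\SYT(\lambda)$, identical conventions on numbering orders and generator choices), and I would double-check the sign $\mathrm{sgn}(\varepsilon(F,F'))$ at each edge of the Boolean lattice because the sign convention is the subtle point where parallel calculations on the two graphs can most easily diverge without producing a genuine homological distinction. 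The ultimate outcome is three explicit rank inequalities, one per pair, each of which is a single verifiable linear-algebra statement.
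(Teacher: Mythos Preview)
Your proposal is correct and follows essentially the same approach as the paper: restrict to a single Specht isotypic component $\calS_\lambda$ with $\lambda\in\{(2^3),(2^2,1^2)\}$, write down the matrices of $d_1$ and $d_2$ on the cyclic generators via the straightening laws, and compare $\dim\ker d_1-\operatorname{rank} d_2$ across each pair (the paper uses $(2^3)$ for the first two pairs and $(2^2,1^2)$ for the third). One small slip: with the generator bases you describe, $\dim\ker d_1-\operatorname{rank} d_2$ already \emph{is} the multiplicity of $\calS_\lambda$, not $f^\lambda$ times it, since each row and column indexes a whole copy of $\calS_\lambda$ rather than an individual vector.
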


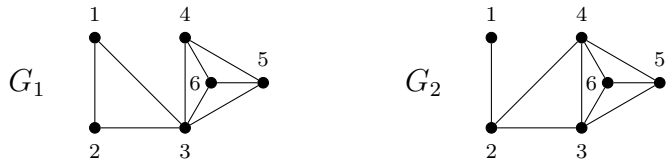
\begin{figure}[ht!]
  \centering
  \begin{tikzpicture}[scale=.6]
    \begin{scope}[xshift=0]
      \node at (-3.5,1){$G_1$};
      \vertex[fill, label=above:\tiny{$1$}](v1) at (-2,2){};
      \vertex[fill, label=below:\tiny{$2$}](v2) at (-2,0){};
      \vertex[fill, label=below:\tiny{$3$}](v3) at (0,0){};
      \vertex[fill, label=above:\tiny{$4$}](v4) at (0,2){};
      \vertex[fill, label=above:\tiny{$5$}](v5) at (1.73,1){};
      \vertex[fill, label={[xshift=0.2em]left:\tiny{$6$}}](v6) at (0.577,1){};
      \draw (v1)--(v2)--(v3)--(v4);
      \draw (v4)--(v5)--(v3);
      \draw (v6)--(v3); \draw (v6)--(v4); \draw (v6)--(v5);
      \draw (v1)--(v3);
    \end{scope}
    \begin{scope}[xshift=250]
      \node at (-3.5,1){$G_2$};
      \vertex[fill, label=above:\tiny{$1$}](v1) at (-2,2){};
      \vertex[fill, label=below:\tiny{$2$}](v2) at (-2,0){};
      \vertex[fill, label=below:\tiny{$3$}](v3) at (0,0){};
      \vertex[fill, label=above:\tiny{$4$}](v4) at (0,2){};
      \vertex[fill, label=above:\tiny{$5$}](v5) at (1.73,1){};
      \vertex[fill, label={[xshift=0.2em]left:\tiny{$6$}}](v6) at (0.577,1){};
      \draw (v1)--(v2)--(v3)--(v4);
      \draw (v4)--(v5)--(v3);
      \draw (v6)--(v3); \draw (v6)--(v4); \draw (v6)--(v5);
      \draw (v2)--(v4);
    \end{scope}
  \end{tikzpicture}
  \caption{$G_1$ and $G_2$ have the same chromatic symmetric function \\$X_{G_1} = X_{G_2} = 144s_{(1^6)}+72s_{(2,1^4)}+24s_{(2^2,1^2)}$.}
  \label{fig.pairs_one}
\end{figure}

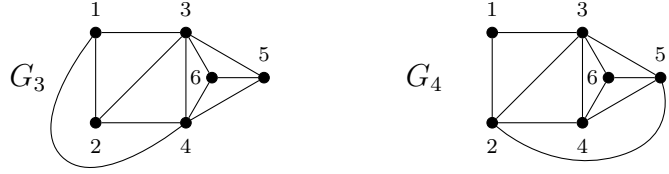
\begin{figure}[ht!]
  \centering
  \begin{tikzpicture}[scale=.6]
    \begin{scope}[xshift=0]
      \node at (-1.5,1){$G_3$};
      \vertex[fill, label=below:\tiny{$2$}](v2) at (0,0){};
      \vertex[fill, label=above:\tiny{$1$}](v1) at (0,2){};
      \vertex[fill, label=above:\tiny{$3$}](v3) at (2,2){};
      \vertex[fill, label={[xshift=0.2em]left:\tiny{$6$}}](v6) at (2.577,1){};
      \vertex[fill, label=above:\tiny{$5$}](v5) at (3.73,1){};
      \vertex[fill, label=below:\tiny{$4$}](v4) at (2,0){};
      \draw (v2)--(v1)--(v3)--(v6)--(v5)--(v4)--(v2);
      \draw (v3)--(v5);
      \draw (v3)--(v4);
      \draw (v2)--(v3);
      \draw (v4)--(v6);
     \draw [black] plot [smooth, tension=1.5] coordinates { (0,2) (-.75,-.75) (2,0)};
    \end{scope}
    \begin{scope}[xshift=250]
      \node at (-1.5,1){$G_4$};
      \vertex[fill, label=below:\tiny{$2$}](v2) at (0,0){};
      \vertex[fill, label=above:\tiny{$1$}](v1) at (0,2){};
      \vertex[fill, label=above:\tiny{$3$}](v3) at (2,2){};
      \vertex[fill, label={[xshift=0.2em]left:\tiny{$6$}}](v6) at (2.577,1){};
      \vertex[fill, label=above:\tiny{$5$}](v5) at (3.73,1){};
      \vertex[fill, label=below:\tiny{$4$}](v4) at (2,0){};
      \draw (v2)--(v1)--(v3)--(v6)--(v5)--(v4)--(v2);
      \draw (v3)--(v5);
      \draw (v3)--(v4);
      \draw (v2)--(v3);
      \draw (v4)--(v6);
      \draw [black] plot [smooth, tension=1.5] coordinates { (0,0) (2.75,-.75) (3.73,1)};
    \end{scope}
  \end{tikzpicture}
  \caption{$G_3$ and $G_4$ have the same chromatic symmetric function \\$X_{G_3} = X_{G_4} = 288s_{(1^6)}+72s_{(2,1^4)}+8s_{(2^2,1^2)}$.}
  \label{fig.pairs_two}
\end{figure}

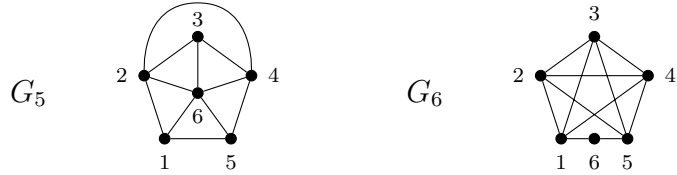
\begin{figure}[ht!]
  \centering
  \begin{tikzpicture}[scale=.75]
    \begin{scope}[xshift=-20]
      \node at (-3,0){$G_5$};
      \vertex[fill, label=below:\tiny{$1$}](v1) at (-0.588,-0.809){};
      \vertex[fill, label=left:\tiny{$2$}](v2) at (-0.951,0.309){};
      \vertex[fill, label={[yshift=-0.2em]above:\tiny{$3$}}](v3) at (0,1){};
      \vertex[fill, label=right:\tiny{$4$}](v4) at (0.951,0.309){};
      \vertex[fill, label=below:\tiny{$5$}](v5) at (0.588,-0.809){};
      \vertex[fill, label=below:\tiny{$6$}](v6) at (0,0){};
      \draw (v1)--(v2)--(v3)--(v4)--(v5)--(v1);
      \draw (v6)--(v1); \draw (v6)--(v2); \draw (v6)--(v3);
      \draw (v6)--(v4); \draw (v6)--(v5);
      \draw [black] plot [smooth, tension=2] coordinates { (-0.951,0.309) (0, 1.6)(0.951,0.309) };
    \end{scope}
    \begin{scope}[xshift=180]
     \node at (-3,0){$G_6$};
      \vertex[fill, label=below:\tiny{$1$}](v1) at (-0.588,-0.809){};
      \vertex[fill, label=left:\tiny{$2$}](v2) at (-0.951,0.309){};
      \vertex[fill, label=above:\tiny{$3$}](v3) at (0,1){};
      \vertex[fill, label=right:\tiny{$4$}](v4) at (0.951,0.309){};
      \vertex[fill, label=below:\tiny{$5$}](v5) at (0.588,-0.809){};
      \vertex[fill, label=below:\tiny{$6$}](v6) at (0,-0.809){};
      \draw (v1)--(v2)--(v3)--(v4)--(v5)--(v6)--(v1);
      \draw (v3)--(v5)--(v2)--(v4)--(v1)--(v3);
    \end{scope}
  \end{tikzpicture}
  \caption{$G_5$ and $G_6$ have the same chromatic symmetric function \\$X_{G_5} = X_{G_6} = 312s_{(1^6)}+60s_{(2,1^4)}+12s_{(2^2,1^2)}$.}
  \label{fig.pairs_three}
\end{figure}

\subsection{The pair \texorpdfstring{$G_1$}{G1} and \texorpdfstring{$G_2$}{G2}}
Let $G_1$ and $G_2$ denote the graphs shown in Figure~\ref{fig.pairs_one}.
This pair of graphs is easily seen to be non-isomorphic as $G_2$ has a degree one vertex whereas $G_1$ does not.
Nonetheless, a result of Orellana and Scott~\cite[Theorem 4.2]{OS14} states that this pair of graphs has the same chromatic symmetric function, which in this case is $X_{G_1} = X_{G_2} = 144s_{(1^6)}+72s_{(2,1^4)}+24s_{(2^2,1^2)}$.

We will show that the graphs $G_1$ and $G_2$ have distinct chromatic symmetric homology by showing that the multiplicities of the $\fS_6$-module $\S{2^3}$ in $H_{1}(G_1;\bbC)$ and $H_{1}(G_2;\bbC)$ are different.

\begin{lemma}
  \label{lemma.G1}
  The multiplicity of the $\fS_6$-module $\S{2^3}$ in $H_{1}(G_1;\bbC)$ is $2$.
\end{lemma}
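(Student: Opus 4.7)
The plan is to compute the multiplicity of $\calS_{(2^3)}$ in the three restricted chain modules $C_i(G_1)|_{\calS_{(2^3)}}$ for $i=0,1,2$, assemble the matrices of the two differentials restricted to these isotypic components, and extract the multiplicity of $\calS_{(2^3)}$ in $H_1$ by linear algebra. Since Schur's lemma identifies every $\fS_6$-equivariant morphism between sums of copies of $\calS_{(2^3)}$ with a scalar matrix once cyclic generators are fixed, the computation reduces to rank calculations over $\bbC$.

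Following Section~\ref{sec.restrict}, I would first record the relevant multiplicities. The module $C_0|_{\calS_{(2^3)}}$ has multiplicity $f^{(2^3)} = 5$, with generators $v_{Y_i}^{Y_1}$ indexed by $\SYT((2^3))$; the module $C_1|_{\calS_{(2^3)}}$ has multiplicity $9 \cdot K_{(2^3),(2,1^4)} = 9 \cdot 2 = 18$, contributing two generators for each of the nine edges of $G_1$; and $C_2|_{\calS_{(2^3)}}$ has multiplicity $h \cdot K_{(2^3),(2^2,1^2)} = h$, where $h$ is the number of pairs of disjoint edges of $G_1$. Spanning subgraphs of partition type $(3,1^3)$ contribute nothing since $(2^3) \not\rhd (3,1^3)$. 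A direct enumeration of matchings in $G_1$ gives $h = 15$.

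Next I would list the cyclic generators $v_{X_i^j}^{Y_1}$ and $v_{W_k^\ell}^{Y_1}$ produced by standardizing $\SSYT((2^3),\mu)$ with respect to the associated numberings $T(F_{e_i})$ and $T(F_{e_i,e_j})$, and assemble the matrices of $d_2 : C_2|_{\calS_{(2^3)}} \to C_1|_{\calS_{(2^3)}}$ (size $18 \times 15$) and $d_1 : C_1|_{\calS_{(2^3)}} \to C_0|_{\calS_{(2^3)}}$ (size $5 \times 18$). Each edge map is the signed inclusion determined by the lexicographic sign $\mathrm{sgn}(\varepsilon)$, and every image must be rewritten in the standard basis via the straightening algorithm of Corollary~\ref{cor.straighten}. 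The multiplicity of $\calS_{(2^3)}$ in $H_1(G_1;\bbC)$ is then $18 - \operatorname{rank}(d_1) - \operatorname{rank}(d_2)$, which we claim equals $2$, so the target is $\operatorname{rank}(d_1) + \operatorname{rank}(d_2) = 16$.

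The main obstacle is purely mechanical but sign-sensitive bookkeeping. For each of the nine edges and fifteen disjoint pairs, one must track the standardizing permutation $\sigma$, attach the correct lexicographic sign for the Hasse-diagram edge in $\calB(G_1)$, and iteratively apply the straightening relations (from the bottom row upward and from right to left in each row) to express every non-standard numbering as a signed sum of standard Young tableaux. A single indexing or sign error anywhere in the assembly can alter either rank and hence the final multiplicity; once the matrices are correctly constructed, the rank computation is routine, and I would cross-check the result against the analogous matrix computations in Appendix~\ref{app.bigmatrices} and the worked example for $K_3$ in Appendix~\ref{app.K3} before concluding multiplicity~$2$.
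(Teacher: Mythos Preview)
Your proposal is correct and follows essentially the same approach as the paper: compute the multiplicities $5$, $18$, and $15$ in $C_0$, $C_1$, $C_2$ restricted to $\calS_{(2^3)}$, assemble the $18\times 15$ and $5\times 18$ matrices for $d_2$ and $d_1$ via the standardization and straightening procedure of Section~\ref{sec.restrict}, and read off the multiplicity as $\dim\ker d_1-\operatorname{rank} d_2$. The paper carries this out explicitly, displaying both matrices and recording $\operatorname{rank} d_1=5$ and $\operatorname{rank} d_2=11$ (equivalently $\dim\ker d_1=13$), which matches your target $\operatorname{rank} d_1+\operatorname{rank} d_2=16$; the only substantive content you have deferred is the actual entry-by-entry construction, which is exactly what the paper's proof consists of.
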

\begin{lemma}
  \label{lemma.G2}
  The multiplicity of the $\fS_6$-module $\S{2^3}$ in $H_{1}(G_2;\bbC)$ is $1$.
\end{lemma}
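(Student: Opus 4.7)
The plan is to mirror the Specht-module computation used for $G_1$, now applied to the edge set of $G_2$. Since $G_2$ also has $m=9$ edges, the dimensions of the restricted chain complex
\[
  C_2(G_2)\big|_{\calS_{(2^3)}} \xrightarrow{d_2} C_1(G_2)\big|_{\calS_{(2^3)}} \xrightarrow{d_1} C_0(G_2)\big|_{\calS_{(2^3)}}
\]
match those in Lemma~\ref{lemma.G1}: using the Kostka numbers $K_{(2^3),(1^6)}=5$, $K_{(2^3),(2,1^4)}=2$, and $K_{(2^3),(2^2,1^2)}=1$, the three chain modules are cyclically generated by $5$, $18$, and $h$ vectors respectively, where $h$ is the number of pairs of disjoint edges in $G_2$.

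First I would list the cyclic generators via the standardization procedure from Section~\ref{sec.restrict}: the standard Young tableaux $Y_1,\ldots,Y_5$ of shape $(2^3)$ for $C_0$, two generators $v_{X_i^j}^{Y_1}$ per edge for $C_1$, and one generator $v_{W_k^1}^{Y_1}$ per disjoint edge pair for $C_2$. Next I would build the integer matrices of $d_1$ and $d_2$ in these bases by evaluating each edge map as an inclusion (via the Frobenius-reciprocity description from Section~\ref{sec-two}) followed by iterated application of the straightening laws from Corollary~\ref{cor.straighten}, signing each contribution by the lexicographic position of the removed edge in $\calB(G_2)$. By Schur's lemma, the passage to cyclic generators loses no information, and the sought multiplicity equals
\[
  \mathrm{mult}(\calS_{(2^3)}, H_1(G_2;\bbC)) = 18 - \operatorname{rank}(d_1) - \operatorname{rank}(d_2).
\]

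The goal is then to show this quantity equals $1$, one less than for $G_1$. The main technical obstacle is bookkeeping: replacing the edge $(1,3)$ (in $G_1$) by $(2,4)$ (in $G_2$) alters both the sign pattern inherited from the lexicographic order on $E(G_2)$ and the straightening output for every edge map involving a spanning subgraph that contains the new edge. I expect that after straightening, either the rank of $d_2$ increases by one or the kernel of $d_1$ shrinks by one compared with the $G_1$ case, accounting for the drop in multiplicity from $2$ to $1$; identifying precisely which homology class of $G_1$ is killed upon rewiring is the interesting geometric content behind the rank computation. The full matrix entries follow the template of Lemma~\ref{lemma.G1} and are deferred to Appendix~\ref{app.bigmatrices}.
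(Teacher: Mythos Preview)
Your outline is exactly the paper's approach, and the formula $18-\operatorname{rank}(d_1)-\operatorname{rank}(d_2)$ is correct. However, the proof \emph{is} the computation, and you have not carried it out: you leave $h$ unevaluated (it is $15$, the same as for $G_1$), you defer the matrices, and your sentence ``I expect that \ldots either the rank of $d_2$ increases by one or the kernel of $d_1$ shrinks by one'' is speculation rather than argument. The paper does not defer this lemma to the appendix; it displays the $18\times 15$ matrix for $d_2(G_2)$ and the $5\times 18$ matrix for $d_1(G_2)$ in the main text and reads off $\dim\ker d_1(G_2)=13$ and $\operatorname{rank} d_2(G_2)=12$, giving multiplicity $13-12=1$.

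To resolve your speculation: $\operatorname{rank} d_1$ is $5$ for both graphs (so $\dim\ker d_1=13$ is unchanged), and it is $\operatorname{rank} d_2$ that jumps from $11$ to $12$. Concretely, of the two homology generators for $G_1$, namely $X_4^2-X_6^2-X_8^2+X_9^1$ and $X_5^2+X_6^2-X_9^1-X_9^2$, only the first survives for $G_2$. Replacing the edge $(1,3)$ by $(2,4)$ does not merely perturb signs: it changes which $W_{ij}$ columns appear and, more importantly, alters the straightening of several $X_k^\ell$ rows (for example, $X_5^2$ and $X_6^2$ pick up nonzero entries in the $d_2$ matrix for $G_2$ that were zero for $G_1$), which is what kills the second class. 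Without actually writing down these matrices and checking the ranks, there is no proof.
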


\begin{proof}{[Lemma~\ref{lemma.G1}]}
  We will compute
  \[
    \xymatrix{C_2(G_1)\big|_{\S{2^3}} \ar[r]^{d_{2}} & C_1(G_1)\big|_{\S{2^3}} \ar[r]^{d_{1}} & C_0(G_1)\big|_{\S{2^3}}},
  \]
  restricted to the $\S{2^3}$ modules.
  We order the edges of $G_1$ in lexicographic order; that is, $$(1,2), (1,3), (2,3), (3,4), (3,5), (3,6), (4,5), (4,6), (5,6),$$ and label these as $e_1,\ldots, e_9$.
  If a spanning subgraph of $G_1$ has only one edge $e_i$, we denote that spanning subgraph by $F_{e_i}$, and if a spanning subgraph of $G_1$ has two edges $e_i$ and $e_j$, we denote that spanning subgraph by $F_{e_i,e_j}$.

  In homological degree zero, let $Y_1,\ldots, Y_5$ denote the standard Young tableaux of shape $(2^3)$ listed with respect to the $\preccurlyeq$ order of Definition~\ref{def.totalorder}.
  Then
  \[
    C_0(G_1)\big|_{\S{2^3}} = \bigoplus_{i=1}^5 \left(\bbC[\fS_6]\cdot v_{Y_i}^{Y_1}\right) \cong \S{2^3}^{\oplus5}.
  \]

  In homological degree one, there are $m=9$ spanning subgraphs $F_{e_i}$.
  The multiplicity of $\S{2^3}$ in $\calM_{F_{e_i}} \cong \M{2,1^4}$ is $2$.
  Let $X_{i}^{1}$ and $X_{i}^{2}$ denote the numberings which index the two copies of $\S{2^3}$ in each $\calM_{F_{e_i}}$, again listed with respect to the $\preccurlyeq$ ordering.
  So
  \[
    C_1(G_1)\big|_{\S{2^3}}
    =
    \bigoplus_{i=1}^9 \left(\bbC[\fS_6] \cdot v_{X_i^1}^{Y_1} \oplus \bbC[\fS_6] \cdot v_{X_i^2}^{Y_1} \right)
    \cong
    \S{2^3}^{\oplus 18}.
  \]

  Lastly in homological degree two, there are $15$ spanning subgraphs of $G_1$ whose connected components have partition type $(2^2,1^2)$.
  The multiplicity of $\S{2^3}$ in each $\calM_{F_{e_i,e_j}} \cong \M{2^2,1^2}$ is $1$, and we let $W_{ij}$ denote the numbering which indexes the copy of $\S{2^3}$ in $\calM_{F_{e_i,e_j}}$.
  So
  \[
    C_2(G_1)\big|_{\S{2^3}}
    =
    \bigoplus_{}^{} \left(\bbC[\fS_6] \cdot v_{W_{ij}}^{Y_1} \right)
    \cong
    \S{2^3}^{\oplus 15}.
  \]

  It suffices to describe the differentials $d_2(G_1)$ and $d_1(G_1)$ by their action on the chosen generators of the modules.
  With respect to the choices of generators made above, $d_2(G_1)$ and $d_1(G_1)$ are described by the following matrices:
  \[
    \setlength{\arraycolsep}{2.0pt}
    \renewcommand{\arraystretch}{1.0}
    \tiny
    d_2(G_1) =
    \kbordermatrix{
            & W_{14} & W_{15} & W_{16} & W_{17} & W_{18} & W_{19} & W_{27} & W_{28} & W_{29} & W_{37} & W_{38} & W_{39} & W_{49} & W_{58} & W_{67} \\
      X_1^1 &     \n &      0 &      1 &      1 &      0 &     \n &      0 &      0 &      0 &      0 &      0 &      0 &      0 &      0 &      0 \\
      X_1^2 &      0 &     \n &      1 &      1 &     \n &      0 &      0 &      0 &      0 &      0 &      0 &      0 &      0 &      0 &      0 \\
      X_2^1 &      0 &      0 &      0 &      0 &      0 &      0 &      1 &      0 &     \n &      0 &      0 &      0 &      0 &      0 &      0 \\
      X_2^2 &      0 &      0 &      0 &      0 &      0 &      0 &      1 &     \n &      0 &      0 &      0 &      0 &      0 &      0 &      0 \\
      X_3^1 &      0 &      0 &      0 &      0 &      0 &      0 &      0 &      0 &      0 &      1 &      0 &     \n &      0 &      0 &      0 \\
      X_3^2 &      0 &      0 &      0 &      0 &      0 &      0 &      0 &      0 &      0 &      1 &     \n &      0 &      0 &      0 &      0 \\
      X_4^1 &      1 &      0 &      0 &      0 &      0 &      0 &      0 &      0 &      0 &      0 &      0 &      0 &     \n &      0 &      0 \\
      X_4^2 &      0 &      0 &      0 &      0 &      0 &      0 &      0 &      0 &      0 &      0 &      0 &      0 &      0 &      0 &      0 \\
      X_5^1 &      0 &      1 &      0 &      0 &      0 &      0 &      0 &      0 &      0 &      0 &      0 &      0 &      0 &     \n &      0 \\
      X_5^2 &      0 &      0 &      0 &      0 &      0 &      0 &      0 &      0 &      0 &      0 &      0 &      0 &      0 &      0 &      0 \\
      X_6^1 &      0 &      0 &      1 &      0 &      0 &      0 &      0 &      0 &      0 &      0 &      0 &      0 &      0 &      0 &     \n \\
      X_6^2 &      0 &      0 &      0 &      0 &      0 &      0 &      0 &      0 &      0 &      0 &      0 &      0 &      0 &      0 &      0 \\
      X_7^1 &      0 &      0 &      0 &      1 &      0 &      0 &      0 &      0 &      0 &     \n &      0 &      0 &      0 &      0 &      1 \\
      X_7^2 &      0 &      0 &      0 &      0 &      0 &      0 &      1 &      0 &      0 &     \n &      0 &      0 &      0 &      0 &      0 \\
      X_8^1 &      0 &      0 &      0 &      0 &      1 &      0 &      0 &      0 &      0 &      0 &     \n &      0 &      0 &      1 &      0 \\
      X_8^2 &      0 &      0 &      0 &      0 &      0 &      0 &      0 &      1 &      0 &      0 &     \n &      0 &      0 &      0 &      0 \\
      X_9^1 &      0 &      0 &      0 &      0 &      0 &      1 &      0 &      0 &      0 &      0 &      0 &     \n &      1 &      0 &      0 \\
      X_9^2 &      0 &      0 &      0 &      0 &      0 &      0 &      0 &      0 &      1 &      0 &      0 &     \n &      0 &      0 &      0
    }
  \]
  \[
    \setlength{\arraycolsep}{2.0pt}
    \renewcommand{\arraystretch}{1.0}
    \tiny
    d_1(G_1) =
    \kbordermatrix{
          & X_1^1 & X_1^2 & X_2^1 & X_2^2 & X_3^1 & X_3^2 & X_4^1 & X_4^2 & X_5^1 & X_5^2 & X_6^1 & X_6^2 & X_7^1 & X_7^2 & X_8^1 & X_8^2 & X_9^1 & X_9^2 \\
      Y_1 &     1 &     0 &     0 &     0 &    \n &     0 &     1 &    \n &     0 &     1 &    \n &     0 &    \n &     0 &     0 &     0 &     1 &     0 \\
      Y_2 &     0 &     0 &     1 &     0 &    \n &     0 &     0 &     0 &     0 &     1 &     0 &     0 &     0 &    \n &     0 &     0 &     0 &     1 \\
      Y_3 &     0 &     1 &     0 &     0 &     0 &    \n &     0 &     0 &     1 &     0 &    \n &     0 &    \n &     0 &     1 &     0 &     0 &     0 \\
      Y_4 &     0 &     0 &     0 &     1 &     0 &    \n &     0 &     1 &     0 &     0 &     0 &     0 &     0 &    \n &     0 &     1 &     0 &     0 \\
      Y_5 &     0 &     0 &     0 &     0 &     0 &     0 &     0 &     1 &     0 &    \n &     0 &     1 &     0 &     0 &     0 &     0 &     0 &     0 \\
    }
  \]

  It is easily verified that $d_1\circ d_2 =0$, $\dim \ker d_1 = 13$, and $\mathrm{rank}\, d_2 = 11$, so the multiplicity of the Specht module $\S{2^3}$ in $H_{1}(G_1;\bbC)$ is two.
  We note that generators of the homology $H_{1}(G_1;\bbC)\big|_{\S{2^3}}$ are $X_4^2-X_6^2-X_8^2+X_9^1$ and $X_5^2+X_6^2-X_9^1-X_9^2$.
\end{proof}

\begin{proof}{[Lemma~\ref{lemma.G2}]}
  We follow the exact same procedure as we applied for the computation of $H_{1}(G_1;\bbC)\big|_{\S{2^3}}$.
  We similarly have $C_0(G_2)\big|_{\S{2^3}} \cong \S{2^3}^{\oplus 5}$, $C_1(G_2)\big|_{\S{2^3}}\cong \S{2^3}^{\oplus 18}$, and $C_2(G_2)\big|_{\S{2^3}} \cong \S{2^3}^{\oplus 15}$.
  The differentials $d_2(G_2)$ and $d_1(G_2)$ are described by the following matrices:
  \[
    \setlength{\arraycolsep}{2pt}
    \renewcommand{\arraystretch}{1.0}
    \tiny
    d_2(G_2) =
    \kbordermatrix{
            & W_{14} & W_{15} & W_{16} & W_{17} & W_{18} & W_{19} & W_{27} & W_{28} & W_{29} & W_{37} & W_{38} & W_{39} & W_{49} & W_{58} & W_{67} \\
      X_1^1 &     \n &      0 &      1 &      1 &      0 &     \n &      0 &      0 &      0 &      0 &      0 &      0 &      0 &      0 &      0 \\
      X_1^2 &      0 &     \n &      1 &      1 &     \n &      0 &      0 &      0 &      0 &      0 &      0 &      0 &      0 &      0 &      0 \\
      X_2^1 &      0 &      0 &      0 &      0 &      0 &      0 &      1 &      0 &     \n &      0 &      0 &      0 &      0 &      0 &      0 \\
      X_2^2 &      0 &      0 &      0 &      0 &      0 &      0 &      1 &     \n &      0 &      0 &      0 &      0 &      0 &      0 &      0 \\
      X_3^1 &      0 &      0 &      0 &      0 &      0 &      0 &      0 &      0 &      0 &      1 &      0 &     \n &      0 &      0 &      0 \\
      X_3^2 &      0 &      0 &      0 &      0 &      0 &      0 &      0 &      0 &      0 &      1 &     \n &      0 &      0 &      0 &      0 \\
      X_4^1 &      1 &      0 &      0 &      0 &      0 &      0 &      0 &      0 &      0 &      0 &      0 &      0 &     \n &      0 &      0 \\
      X_4^2 &      0 &      0 &      0 &      0 &      0 &      0 &      0 &      0 &      0 &      0 &      0 &      0 &      0 &      0 &      0 \\
      X_5^1 &      0 &      1 &      0 &      0 &      0 &      0 &      0 &      0 &      0 &     \n &      0 &      0 &      0 &     \n &      0 \\
      X_5^2 &      0 &      0 &      0 &      0 &      0 &      0 &      0 &      0 &      0 &     \n &      0 &      0 &      0 &      0 &      0 \\
      X_6^1 &      0 &      0 &      1 &      0 &      0 &      0 &      0 &      0 &      0 &      0 &     \n &      0 &      0 &      0 &     \n \\
      X_6^2 &      0 &      0 &      0 &      0 &      0 &      0 &      0 &      0 &      0 &      0 &     \n &      0 &      0 &      0 &      0 \\
      X_7^1 &      0 &      0 &      0 &      1 &      0 &      0 &     \n &      0 &      0 &      0 &      0 &      0 &      0 &      0 &      1 \\
      X_7^2 &      0 &      0 &      0 &      0 &      0 &      0 &     \n &      0 &      0 &      0 &      0 &      0 &      0 &      0 &      0 \\
      X_8^1 &      0 &      0 &      0 &      0 &      1 &      0 &      0 &     \n &      0 &      0 &      0 &      0 &      0 &      1 &      0 \\
      X_8^2 &      0 &      0 &      0 &      0 &      0 &      0 &      0 &     \n &      0 &      0 &      0 &      0 &      0 &      0 &      0 \\
      X_9^1 &      0 &      0 &      0 &      0 &      0 &      1 &      0 &      0 &     \n &      0 &      0 &      0 &      1 &      0 &      0 \\
      X_9^2 &      0 &      0 &      0 &      0 &      0 &      0 &      0 &      0 &     \n &      0 &      0 &      1 &      0 &      0 &      0
    }
  \]
  \[
    \setlength{\arraycolsep}{2pt}
    \renewcommand{\arraystretch}{1.0}
    \tiny
    d_1(G_2) =
    \kbordermatrix{
          & X_1^1 & X_1^2 & X_2^1 & X_2^2 & X_3^1 & X_3^2 & X_4^1 & X_4^2 & X_5^1 & X_5^2 & X_6^1 & X_6^2 & X_7^1 & X_7^2 & X_8^1 & X_8^2 & X_9^1 & X_9^2 \\
      Y_1 &     1 &     0 &    \n &     0 &     0 &     1 &     1 &    \n &     0 &     1 &    \n &     0 &    \n &     0 &     0 &     0 &     1 &     0 \\
      Y_2 &     0 &     0 &    \n &     0 &     1 &     0 &     0 &     0 &     0 &     1 &     0 &     0 &     0 &    \n &     0 &     0 &     0 &     1 \\
      Y_3 &     0 &     1 &     0 &    \n &     0 &     1 &     0 &     0 &     1 &     0 &    \n &     0 &    \n &     0 &     1 &     0 &     0 &     0 \\
      Y_4 &     0 &     0 &     0 &    \n &     0 &     0 &     0 &     1 &     0 &     0 &     0 &     0 &     0 &    \n &     0 &     1 &     0 &     0 \\
      Y_5 &     0 &     0 &     0 &     0 &     0 &    \n &     0 &     1 &     0 &    \n &     0 &     1 &     0 &     0 &     0 &     0 &     0 &     0 \\
    }
  \]

  It is easily verified that $d_1\circ d_2 =0$, $\dim \ker d_1(G_2) = 13$, and $\mathrm{rank}\, d_2(G_2) = 12$, so the multiplicity of the Specht module $\S{2^3}$ in $H_{1}(G_2;\bbC)$ is one.
  We note that a generator of the homology $H_{1}(G_2;\bbC)\big|_{\S{2^3}}$ is $X_4^2-X_6^2-X_8^2+X_9^1$.
\end{proof}

\begin{remark}
  Conjecture~\ref{conj.one} suggests that we could have chosen to show that the multiplicities of the $\fS_6$-module $\S{2^2,1^2}$ in $H_{1}(G_1;\bbC)$ and $H_{1}(G_2;\bbC)$ are different, and in fact, the procedure outlined in Section~\ref{sec.restrict} allows us to perform this computation.
  However, we chose to do the computations for $\S{2^3}$ because they are are simpler, due to the fact that the multiplicity of $\S{2^3}$ in $\M{2,1^4}$ is two, whereas the multiplicity of $\S{2^2,1^2}$ in $\M{2,1^4}$ is three.
\end{remark}

\subsection{The pair \texorpdfstring{$G_3$}{G3} and \texorpdfstring{$G_4$}{G4}}
Let $G_3$ and $G_4$ denote the graphs shown in Figure~\ref{fig.pairs_two}.
This pair of graphs is easily seen to be non-isomorphic as $G_4$ has a degree two vertex whereas $G_3$ does not.
Nonetheless, they have the same chromatic symmetric function, which in this case is $X_{G_3} = X_{G_4} = 288s_{(1^6)}+72s_{(2,1^4)}+8s_{(2^21^2)}$.
In contrast with the first example, the result of Orellana and Scott~\cite[Theorem 4.2]{OS14} does not apply to this pair of graphs.

\begin{lemma}
  \label{lemma.G3}
  The multiplicity of the $\fS_6$-module $\S{2^3}$ in $H_{1}(G_3;\bbC)$ is $1$.
\end{lemma}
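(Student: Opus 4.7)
The plan is to mirror the computation carried out in the proof of Lemma~\ref{lemma.G1}, applied now to $G_3$. First, I would order the $11$ edges of $G_3$ lexicographically and label them $e_1,\ldots, e_{11}$. Using the Kostka numbers $K_{(2^3),(1^6)} = 5$, $K_{(2^3),(2,1^4)} = 2$, and $K_{(2^3),(2^2,1^2)} = 1$, the relevant chain modules restricted to $\S{2^3}$ are
\[
  C_0(G_3)\big|_{\S{2^3}} \cong \S{2^3}^{\oplus 5}, \quad
  C_1(G_3)\big|_{\S{2^3}} \cong \S{2^3}^{\oplus 22}, \quad
  C_2(G_3)\big|_{\S{2^3}} \cong \S{2^3}^{\oplus h},
\]
where $h$ is the number of spanning subgraphs of $G_3$ whose edges form a matching of size two (partition type $(2^2,1^2)$). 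One can list the standard Young tableaux $Y_1,\ldots, Y_5$ of shape $(2^3)$ and the numberings $X_i^1, X_i^2$ and $W_{ij}$ exactly as in the proof of Lemma~\ref{lemma.G1}.

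Next, I would build the matrices representing $d_2$ and $d_1$ in the bases of cyclic generators $v_{X_i^j}^{Y_1}$ and $v_{W_{ij}}^{Y_1}$ prescribed in Section~\ref{sec.restrict}. Each edge map is a signed inclusion of permutation modules; to express its image in the standard Young tableau basis of $\S{2^3}$, I would iteratively apply the straightening laws of Corollary~\ref{cor.straighten} to rewrite any non-standard tableau that arises. The signs come from the lexicographic ordering of $E(G_3)$, as defined in Section~\ref{sec-two}.

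Finally, I would verify $d_1 \circ d_2 = 0$ as a consistency check, then compute the multiplicity of $\S{2^3}$ in $H_1(G_3;\bbC)$ as $\dim \ker d_1 - \mathrm{rank}(d_2)$, which should equal $1$. The main obstacle is the combinatorial bookkeeping: $G_3$ has two more edges than $G_1$ and more matchings of size two, so the matrices are larger, and careful application of the straightening laws is needed at each boundary where a non-standard numbering arises on the target side of an edge map. Once the matrices are correctly assembled the rank computation is a routine piece of linear algebra, and locating an explicit cycle that survives to homology (analogous to $X_4^2-X_6^2-X_8^2+X_9^1$ for $G_1$) will confirm that the multiplicity is exactly one rather than zero.
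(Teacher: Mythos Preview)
Your plan is correct and follows precisely the approach the paper uses: restrict the chain complex to the $\S{2^3}$-isotypic component, write down the matrices for $d_1$ and $d_2$ using the straightening laws, and compute $\dim\ker d_1 - \operatorname{rank} d_2$. The paper executes exactly this, obtaining $\dim\ker d_1 = 17$ and $\operatorname{rank} d_2 = 16$, with explicit generator $X_6^2 - X_8^2 - X_{10}^2 + X_{11}^1$; your dimension $C_1\big|_{\S{2^3}} \cong \S{2^3}^{\oplus 22}$ matches the paper's matrices (there are indeed $11$ edges), and the number $h$ of size-two matchings in $G_3$ turns out to be $23$.
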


\begin{lemma}
  \label{lemma.G4}
  The multiplicity of the $\fS_6$-module $\S{2^3}$ in $H_{1}(G_4;\bbC)$ is $0$.
\end{lemma}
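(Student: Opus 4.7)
The plan is to mirror the proofs of Lemmas~\ref{lemma.G1} and~\ref{lemma.G2} essentially verbatim, adapting the bookkeeping to the edge set of $G_4$. The graphs $G_3$ and $G_4$ both have $9$ edges and, since both have the same chromatic symmetric function, the dimensions of the chain groups restricted to $\S{2^3}$ will again be $C_0\big|_{\S{2^3}}\cong \S{2^3}^{\oplus 5}$, $C_1\big|_{\S{2^3}}\cong \S{2^3}^{\oplus 18}$, and $C_2\big|_{\S{2^3}}\cong \S{2^3}^{\oplus h}$ where $h$ is the number of spanning subgraphs of $G_4$ with two edges and partition type $(2^2,1^2)$. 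The first step is to order the edges of $G_4$ lexicographically, enumerate the $h$ two-edge subgraphs of partition type $(2^2,1^2)$, and for each such piece choose the canonical generators $Y_i$, $X_i^j$, $W_{ij}$ obtained from the standardization procedure of Section~\ref{sec.restrict}.

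Next, I will write down the two matrices $d_2(G_4)$ and $d_1(G_4)$ with respect to these chosen generators. The entries are determined purely by the inclusion-of-permutation-modules description of the edge maps (with the appropriate lexicographic sign on each Boolean-lattice edge), together with Corollary~\ref{cor.straighten} applied to each $v_W^{Y_1}$ or $v_{X_i^j}^{Y_1}$ whose tableau is not already standard. Concretely: for each $W_{ij}$ and each of the two edges of $\varepsilon$ one removes, one rewrites the resulting numbering (which typically has a row-descent) as a signed sum of the $X_i^1, X_i^2$ via the straightening laws; for each $X_i^j$ one similarly expresses the image as a signed sum of the $Y_1,\dots,Y_5$. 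This is exactly the mechanical computation carried out for $G_1$ and $G_2$, so the same recipe applies here.

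The conclusion step is the rank computation. To show the multiplicity is zero, it suffices to verify $d_1 \circ d_2 = 0$, compute $\dim \ker d_1(G_4)$, and check that this equals $\mathrm{rank}\, d_2(G_4)$, so that $H_1(G_4;\bbC)\big|_{\S{2^3}} = \ker d_1 / \im d_2 = 0$. Given the pattern observed for $G_1$ and $G_2$ (where both $\dim \ker d_1$ equaled $13$), and that $G_3$ has multiplicity $1$, I expect for $G_4$ a slight increase in $\mathrm{rank}\, d_2$ relative to $G_3$ that pushes it to match $\dim \ker d_1$ exactly.

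The main obstacle is not conceptual but computational: the matrices $d_2(G_4)$ and $d_1(G_4)$ are large (roughly $18 \times h$ and $5\times 18$ respectively), and the individual entries require careful, case-by-case application of the straightening laws to make sure signs and multiplicities are correctly tracked. Because even a single sign error can change a rank by one and hence corrupt the multiplicity count, the bulk of the proof will be spent tabulating the matrices exactly and then verifying $d_1d_2=0$ as an internal consistency check before reading off the final ranks. Since the formalism and conventions are already set up identically to the $G_1, G_2$ case, I would defer the explicit matrix display to the appendix (as the paper already signals it will do) and in the body simply state the resulting ranks that yield $\ker d_1(G_4) = \im d_2(G_4)$.
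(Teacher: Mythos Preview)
Your overall strategy is exactly the one the paper uses: restrict the chain complex to the $\S{2^3}$ isotypic component, write down $d_1$ and $d_2$ with respect to the canonical generators from Section~\ref{sec.restrict}, and then verify that $\dim\ker d_1 = \mathrm{rank}\, d_2$. So conceptually there is nothing to correct.

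However, there is a concrete bookkeeping error that would derail the computation before it starts: $G_3$ and $G_4$ each have $11$ edges, not $9$. (You can read this off Figure~\ref{fig.pairs_two}: the hexagon $v_2v_1v_3v_6v_5v_4v_2$ contributes six edges, plus the chords $v_3v_5$, $v_3v_4$, $v_2v_3$, $v_4v_6$, and the extra edge $v_2v_5$.) Consequently the chain modules are not the same size as for $G_1,G_2$: one has $C_1(G_4)\big|_{\S{2^3}}\cong \S{2^3}^{\oplus 22}$ (eleven edges, each contributing two copies) rather than $\S{2^3}^{\oplus 18}$, and $C_2(G_4)\big|_{\S{2^3}}$ has $23$ summands coming from the two-edge subgraphs of partition type $(2^2,1^2)$. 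The final rank check in the paper is accordingly $\dim\ker d_1(G_4)=17=\mathrm{rank}\, d_2(G_4)$, not $13$. Once you correct the edge count, your outlined procedure goes through unchanged and reproduces the paper's proof.
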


Please see Appendix~\ref{app.bigmatrices} for these computations.

\subsection{The pair \texorpdfstring{$G_5$}{G5} and \texorpdfstring{$G_6$}{G6}}
Let $G_5$ and $G_6$ denote the graphs shown in Figure~\ref{fig.pairs_three}.
This pair of graphs is easily seen to be non-isomorphic since $G_5$ is planar whereas $G_6$ is not, as it is an edge-subdivision of the nonplanar complete graph $K_5$.
Nonetheless, they have the same chromatic symmetric function $X_{G_5} = X_{G_6} = 312s_{(1^6)}+60s_{(2,1^4)}+12s_{(2^2,1^2)}$.


\begin{lemma}
  \label{lemma.G5}
  The multiplicity of the $\fS_6$-module $\S{2^2,1^2}$ in $H_{1}(G_5,\bbC)$ is $1$.
\end{lemma}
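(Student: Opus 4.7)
The plan is to parallel the proofs of Lemmas~\ref{lemma.G1}--\ref{lemma.G4}, applying the framework of Section~\ref{sec.restrict} with $\lambda = (2^2,1^2)$ and $n=6$. First I would order the $11$ edges of $G_5$ lexicographically and determine the ranks of the restricted chain modules. Using $f^{(2^2,1^2)}=9$, $K_{(2^2,1^2),(2,1^4)}=3$, and $K_{(2^2,1^2),(2^2,1^2)}=1$, we obtain $C_0(G_5)\big|_{\S{2^2,1^2}}\cong\S{2^2,1^2}^{\oplus 9}$ and $C_1(G_5)\big|_{\S{2^2,1^2}}\cong\S{2^2,1^2}^{\oplus 33}$. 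For $C_2$, only spanning subgraphs of partition type $(2^2,1^2)$ contribute, since $(2^2,1^2)\not\rhd(3,1^3)$; a direct count gives $\binom{11}{2}-\sum_{v}\binom{\deg(v)}{2} = 55-31 = 24$ such subgraphs, so $C_2(G_5)\big|_{\S{2^2,1^2}}\cong\S{2^2,1^2}^{\oplus 24}$.

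Next, for each direct summand I would use the standardization procedure of Section~\ref{sec.restrict} to write down the numbering of shape $\lambda$ indexing the chosen cyclic generator $v_S^{Y_1}$; here $\{Y_1,\ldots,Y_9\}$ is the list of standard Young tableaux of shape $(2^2,1^2)$ in the order of Definition~\ref{def.totalorder}. With these generators fixed, each edge map $d_{\varepsilon(F,F')}$ is a signed inclusion of permutation modules, and its action is computed by pushing the image of each generator into the chosen basis via iterated applications of the straightening laws of Corollary~\ref{cor.straighten}. This yields explicit integer matrices $d_2(G_5)$ of size $33 \times 24$ and $d_1(G_5)$ of size $9 \times 33$.

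To conclude, I would verify $d_1(G_5)\circ d_2(G_5) = 0$ as a sanity check and then compute the ranks by Gaussian elimination, showing that $\dim \ker d_1(G_5) - \mathrm{rank}\, d_2(G_5) = 1$, equivalently $\mathrm{rank}\, d_1(G_5) + \mathrm{rank}\, d_2(G_5) = 32$. By Schur's lemma this is exactly the multiplicity of $\S{2^2,1^2}$ in $H_1(G_5;\bbC)$. The main obstacle is purely computational: the matrices here are noticeably larger than those appearing in Lemma~\ref{lemma.G1}, and each column of $d_2$ generically requires several straightening moves to rewrite a non-standard numbering of shape $(2^2,1^2)$ as a $\bbZ$-linear combination of the nine standard Young tableaux. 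Following the convention used for Lemmas~\ref{lemma.G3}--\ref{lemma.G4}, I would relegate the explicit matrices to Appendix~\ref{app.bigmatrices}.
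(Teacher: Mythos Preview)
Your proposal is correct and follows essentially the same approach as the paper's own proof: the paper also computes $C_0,C_1,C_2$ restricted to $\S{2^2,1^2}$ with the same multiplicities $9,33,24$, writes down the explicit matrices for $d_1$ and $d_2$ (relegated to Appendix~\ref{app.bigmatrices}), and verifies $\dim\ker d_1(G_5)=24$ and $\mathrm{rank}\,d_2(G_5)=23$. Your count of the $24$ two-edge subgraphs of type $(2^2,1^2)$ via $\binom{11}{2}-\sum_v\binom{\deg v}{2}$ is a nice shortcut, and the paper additionally records an explicit generator $X_9^3+X_{10}^3+X_{11}^3$ of the homology.
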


\begin{lemma}
  \label{lemma.G6}
  The multiplicity of the $\fS_6$-module $\S{2^2,1^2}$ in $H_{1}(G_6,\bbC)$ is $0$.
\end{lemma}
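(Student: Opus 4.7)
The plan is to mirror the computation carried out in the proof of Lemma~\ref{lemma.G1}, producing the restricted chain complex
\[
  \xymatrix{C_2(G_6)\big|_{\S{2^2,1^2}} \ar[r]^{d_{2}} & C_1(G_6)\big|_{\S{2^2,1^2}} \ar[r]^{d_{1}} & C_0(G_6)\big|_{\S{2^2,1^2}}},
\]
and verify that $\dim\ker d_1 = \mathrm{rank}\, d_2$, which forces the multiplicity of $\S{2^2,1^2}$ in $H_1(G_6;\bbC)$ to vanish.

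First, I would order the $11$ edges of $G_6$ lexicographically and set up the three chain groups using the recipe of Section~\ref{sec.restrict}. In homological degree zero, the multiplicity of $\S{2^2,1^2}$ in the regular representation $\bbC[\fS_6]$ is $f^{(2^2,1^2)}=9$, so $C_0(G_6)|_{\S{2^2,1^2}} \cong \S{2^2,1^2}^{\oplus 9}$, with generators indexed by the $9$ standard Young tableaux of shape $(2^2,1^2)$ listed in the total order $\preccurlyeq$. In homological degree one, each of the $11$ edges contributes a permutation module $\M{2,1^4}$, and the multiplicity of $\S{2^2,1^2}$ inside it is $K_{(2^2,1^2),(2,1^4)}=3$, giving $C_1(G_6)|_{\S{2^2,1^2}} \cong \S{2^2,1^2}^{\oplus 33}$. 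In homological degree two, only those spanning subgraphs whose component type is $(2^2,1^2)$ contribute, since $(2^2,1^2)\not\rhd (3,1^3)$; each such subgraph supplies $K_{(2^2,1^2),(2^2,1^2)}=2$ copies of $\S{2^2,1^2}$, and the total count is $2$ times the number of pairs of vertex-disjoint edges in $G_6$.

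Next, I would choose cyclic generators of each Specht submodule by standardizing the relevant semistandard Young tableaux against the numbering associated to each spanning subgraph, exactly as in Example~\ref{eg.210}. The edge maps, being signed inclusions of permutation modules, are then computed on these cyclic generators; each image lies in a permutation module that is not yet expressed in the chosen basis, and I would use the straightening algorithm of Corollary~\ref{cor.straighten} to rewrite it as an integral combination of the $v_U^{Y_1}$ indexed by standard Young tableaux. This produces explicit matrices for $d_1(G_6)$ and $d_2(G_6)$, analogous to the matrices displayed in the proof of Lemma~\ref{lemma.G1}.

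The main obstacle is the sheer size of the data: $d_1$ is a $9\times 33$ matrix, and $d_2$ is roughly $33\times 2h$ where $h$ is the number of two-edge disjoint subgraphs, so the bookkeeping (correctly enumerating the numberings $X_i^j$ and $W_k^\ell$, and systematically applying the straightening laws to each edge image) is the only real difficulty. Once the matrices are assembled, the conclusion is routine linear algebra: one checks $d_1\circ d_2 = 0$, and then verifies $\dim\ker d_1 = \mathrm{rank}\, d_2$. Because of their size, the matrices themselves should be deferred to Appendix~\ref{app.bigmatrices}, where the parallel computations for $G_3$, $G_4$, and $G_5$ are recorded.
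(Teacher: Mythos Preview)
Your approach is exactly the paper's, and the final verification is also the same: the paper computes $\dim\ker d_1(G_6)=24$ and $\mathrm{rank}\,d_2(G_6)=24$, hence multiplicity zero. However, there is one concrete error in your setup. You claim $K_{(2^2,1^2),(2^2,1^2)}=2$, but in fact this Kostka number equals $1$: in a semistandard tableau of shape $(2^2,1^2)$ and content $(2^2,1^2)$, the two $1$'s are forced into row~1, the two $2$'s into row~2, and column-strictness forces $3$ above $4$ in the remaining cells, leaving a unique tableau. Consequently each vertex-disjoint pair of edges contributes a single copy of $\S{2^2,1^2}$, and $C_2(G_6)\big|_{\S{2^2,1^2}}\cong \S{2^2,1^2}^{\oplus 24}$ (there are $24$ such pairs), so $d_2$ is $33\times 24$, not $33\times 2h$. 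This is a bookkeeping slip you would catch the moment you tried to list the SSYTs, but it is worth correcting before writing down the matrices.
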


Please see Appendix~\ref{app.bigmatrices} for these computations.

\section{Torsion in chromatic symmetric homology}
\label{sec-four}

In this section, we move from considering linear representations of $\fS_n$ over the field $\bbC$, to working over the ring $\bbZ$.

\subsection{The graphs \texorpdfstring{$K_5$}{K5} and \texorpdfstring{$K_{3,3}$}{K33}}
\label{sectorsion}
We summarize the results of this section here.
Working over the integers, the $q$-degree zero homology $H_1(G;\bbZ)$ exhibits torsion in the case when $G$ is the complete graph $K_5$ or the complete bipartite graph $K_{3,3}$.

We provide the details for the computation for the graph $K_5$ here, and defer the similar computations for the graph $K_{3,3}$ to Appendix~\ref{app.bigmatrices}.
\begin{theorem}
  \label{thm.K5}
  The chromatic symmetric homology $H_1(K_5;\bbZ)$ contains $\bbZ_2$-torsion.
\end{theorem}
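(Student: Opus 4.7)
The plan is to exhibit an explicit element $z \in C_1(K_5;\bbZ)$ that represents a nonzero class of order two in $H_1(K_5;\bbZ)$. Concretely, we will produce a cycle $z$ (so $d_1 z = 0$) together with a chain $w \in C_2(K_5;\bbZ)$ satisfying $d_2 w = 2z$, and then verify that $z$ itself does not lie in the image of $d_2$. This is precisely the same template used to show that a torsion element exists: the cycle $z$ has order dividing $2$ because $2z$ bounds, and has order exactly $2$ because $z$ does not bound.

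The first step is to restrict the chain complex to a carefully chosen Specht module $\calS_\lambda$ for some $\lambda \vdash 5$. By Schur's lemma the complex splits as a direct sum over isotypic components, so torsion in $H_1(K_5;\bbZ)$ can be detected one Specht module at a time. For $K_5$ the most tractable choice is a small $\lambda$ where the permutation-module multiplicities $K_{\lambda,\mu}$ in $q$-degree zero for $\mu = (2,1^3)$ and $\mu = (2^2,1)$ are modest; I would follow the template of Section~\ref{sec.restrict} to read off appropriate cyclic generators $v_{X_i^j}^{Y_1}$ and $v_{W_k^\ell}^{Y_1}$ for each spanning subgraph, listing the $10$ single-edge subgraphs and the $15$ subgraphs of partition type $(2^2,1)$ (the type-$(3,1^2)$ subgraphs contribute nothing to this isotypic component when $\lambda \not\rhd (3,1^2)$).

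Next, using the straightening laws of Corollary~\ref{cor.straighten}, I would build the integer matrices for $d_1$ and $d_2$ restricted to $\calS_\lambda$, exactly as was done in the proofs of Lemmas~\ref{lemma.G1} and~\ref{lemma.G2}, except that coefficients are now tracked in $\bbZ$. The appearance of a $\bbZ_2$-torsion summand in $\ker d_1 / \im d_2$ is equivalent to a diagonal entry equal to $2$ in the Smith normal form of the concatenated matrix $[d_2 \mid d_1^{\top}]$, and symmetry of $K_5$ under $\fS_5$ acting on vertex labels should allow us to guess a highly symmetric candidate $z$ (for instance, an alternating sum over an $\fS_5$-orbit of generators $X_i^j$) whose cycle condition is forced by the $\fS_5$-invariance of the combinatorial data. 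I would then construct $w$ as a similarly symmetric combination of $W_k^\ell$ and verify $d_2 w = 2z$ directly.

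The main obstacle is twofold. First, the computation is sizeable: even after restricting to a single Specht module, $K_5$ contributes $10$ generators in homological degree $1$ (times $K_{\lambda,(2,1^3)}$) and $15$ in degree $2$ (times $K_{\lambda,(2^2,1)}$), so a careful bookkeeping of straightening moves is required. Second, and more delicate, is showing that $z \notin \im d_2$ over $\bbZ$: this cannot be checked by a rank argument as in Section~\ref{sec.pairs}, and instead requires either a Smith normal form computation or the exhibition of an explicit linear functional on $\ker d_1$ that vanishes on $\im d_2$ but takes an odd value on $z$. The expected simplest route is to reduce the relevant columns of $d_2$ modulo $2$ and observe that $z \bmod 2$ is not in the mod-$2$ column span, which simultaneously certifies $2z \in \im d_2$ (by an integral lift) and $z \notin \im d_2$.
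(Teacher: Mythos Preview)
Your plan is exactly the approach the paper takes: it restricts to the Specht module $\calS_{(2^2,1)}$, writes down the explicit integer matrices for $d_2$ and $d_1$ (of sizes $20\times 15$ and $5\times 20$, matching your multiplicity counts), and then exhibits concrete elements $h\in C_1$ and $g\in C_2$ with $d_1(h)=0$, $d_2(g)=2h$, and $h\notin\im d_2$. The only things left unspecified in your outline are the commitment to $\lambda=(2^2,1)$ and the actual execution of the linear algebra, which the paper carries out in full.
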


\begin{theorem}
  \label{thm.K33}
  The chromatic symmetric homology $H_1(K_{3,3};\bbZ)$ contains $\bbZ_2$-torsion.
\end{theorem}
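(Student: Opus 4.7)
The plan is to follow the strategy used for $K_5$ in Theorem~\ref{thm.K5}: explicitly construct a $1$-cycle $\xi \in C_1(K_{3,3};\bbZ)$ whose homology class has order exactly $2$. This reduces to three tasks: (i) write down $\xi$ and verify $d_1\xi = 0$; (ii) exhibit $\eta \in C_2(K_{3,3};\bbZ)$ with $d_2\eta = 2\xi$; and (iii) show that $\xi \notin \im d_2$. Together these three points force the class $[\xi]$ to have order exactly $2$ in $H_1(K_{3,3};\bbZ)$.

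I would begin by labelling the bipartition of $K_{3,3}$ as $\{1,2,3\}\sqcup\{4,5,6\}$ and ordering the $9$ edges lexicographically. Working in $q$-degree zero, each $\calM_F$ becomes an integral permutation module over $\bbZ[\fS_6]$, and the edge maps are (signed) inclusions as described in Section~\ref{sec-two}. Following the procedure of Section~\ref{sec.restrict}, the next step is to write the integer matrices for $d_2$ and $d_1$ with respect to the generators $v_T^S$ of the relevant Specht-module isotypic components, applying the straightening laws (Corollary~\ref{cor.straighten}) to re-express everything in the basis indexed by standard Young tableaux. Guided by the analogous computation for $K_5$ and by Conjecture~\ref{tor.conj}, the natural place to look first is the $\calS_{(2^2,1^2)}$ isotypic component.

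The candidate chains $\xi$ and $\eta$ are best guessed by exploiting the large automorphism group of $K_{3,3}$: any torsion class invariant under $\fS_3\times\fS_3$ (permuting each side of the bipartition) must be a symmetric combination of our chain generators. I would therefore search among such symmetric combinations for one supported on the $\fS_6$-translates of a small motif (an edge, a path of length $2$, or one of the nine induced $4$-cycles of $K_{3,3}$), and whose $d_2$-image is visibly twice a cycle because paired contributions with opposite orientations cancel modulo~$2$. A direct calculation in the integer matrices above is then expected to produce an explicit $\eta$ with $d_2\eta=2\xi$, and the identity $d_1\xi=0$ follows from $2d_1\xi = d_1d_2\eta=0$ together with $C_0$ being torsion-free.

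The main obstacle is verifying (iii): that $\xi$ is not itself a boundary. For this I would reduce the whole chain complex modulo $2$ and show that the class of $\xi\bmod 2$ is nonzero in $H_1(K_{3,3};\bbZ_2)$, by computing the image and kernel of the mod-$2$ boundary maps $d_2\otimes\bbZ_2$ and $d_1\otimes\bbZ_2$ restricted to the chosen isotypic component. This is a finite linear algebra computation over $\bbZ_2$, analogous to the one for $K_5$ and deferred to Appendix~\ref{app.bigmatrices}. The practical difficulty is merely the size of the matrices: with $9$ edges and $\binom{9}{2}=36$ two-edge subgraphs, $C_2$ restricted to $\calS_{(2^2,1^2)}$ is large, but $\fS_6$-equivariance and the straightening laws keep the calculation tractable. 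Once non-bounding is established, the three points combine to produce the claimed $\bbZ_2$-torsion class in $H_1(K_{3,3};\bbZ)$.
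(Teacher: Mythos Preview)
Your proposal is correct and follows essentially the same approach as the paper: restrict to the $\calS_{(2^2,1^2)}$ isotypic component, write down the integer matrices for $d_1$ and $d_2$ using the generators $v_T^S$ and the straightening laws, and exhibit explicit chains $h\in C_1$ and $g\in C_2$ with $d_1h=0$, $d_2g=2h$, and $h\notin\im d_2$. One small correction: of the $\binom{9}{2}=36$ two-edge spanning subgraphs of $K_{3,3}$, only the $18$ whose edges are vertex-disjoint have partition type $(2^2,1^2)$ and hence contribute to $C_2\big|_{\calS_{(2^2,1^2)}}$ (the remaining $18$ have type $(3,1^3)$, which does not dominate $(2^2,1^2)$), so the matrix is half the size you estimated; the paper's explicit $g$ is indeed a signed sum over all $18$ of these, and $h$ is supported on just four generators.
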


\begin{remark}
  Over the finite field $GF(p)$ of prime characteristic $p$, the $p$-modular irreducible representations $\calD_\lambda= \calS_\lambda/ ( \calS_\lambda \cap \calS_\lambda^\perp)$ are indexed by the $p$-regular partitions $\lambda$ of $n$, which are partitions whose parts appear with multiplicity less than $p$.

  In the case $p=2$, the set of $2$-regular partitions is the set of partitions with distinct parts.
  The symmetric group $\fS_5$ has three $2$-modular irreducible representations $\calD_{(5)}$, $\calD_{(3,2)}$ and $\calD_{(4,1)}$, of dimensions $1,4$ and $4$, and the symmetric group $\fS_6$ has four $2$-modular irreducible representations $\calD_{(6)}$, $\calD_{(5,1)}$, $\calD_{(4,2)}$ and $\calD_{(3,2,1)}$, of dimensions $1,4,4$ and $16$.

  For more background on modular representations of the symmetric group, see James and Kerber~\cite[Chapter 7]{JK}.
\end{remark}

We show that $\bbZ_2$-torsion in $H_1(K_5;\bbZ)$ is generated by a linear combination of polytabloids $v_T^Y$ of shape $(2^2,1)$, while $\bbZ_2$-torsion in $H_1(K_{3,3};\bbZ)$ is generated by a linear combination of polytabloids of shape $(2^2,1^2)$.
Computational data that we obtained (Appendix~\ref{computations}, graphs numbered $31$ and $131$) suggests that as $\bbZ$-modules, $H_1(K_5;\bbZ) = \bbZ^{24} \oplus \bbZ_2^5$, while $H_1(K_{3,3};\bbZ) = \bbZ^{25} \oplus \bbZ_2^4$.
 Also see Conjectures~\ref{conj.k5} and~\ref{conj.k33}.

\begin{proof}{[Theorem~\ref{thm.K5}]}
  We have the chain modules $C_0(K_5)\big|_{\S{2^2,1}} \cong \S{2^2,1}^{\oplus 5}$, $C_1(K_5)\big|_{\S{2^2,1}} \cong \S{2^2,1}^{\oplus 20}$, and $C_2(K_5)\big|_{\S{2^2,1}} \cong \S{2^2,1}^{\oplus 15}$.
  The differentials $d_2(K_5)$ and $d_1(K_5)$ are described by the following matrices:
  \[
    \setlength{\arraycolsep}{1pt}
    \renewcommand{\arraystretch}{1.0}
    \tiny
    d_2(K_5) =
    \kbordermatrix{
              & W_{18} & W_{19} & W_{1a} & W_{26} & W_{27} & W_{2a} & W_{35} & W_{37} & W_{39} & W_{45} & W_{46} & W_{48} & W_{5a} & W_{69} & W_{78} \\
      X_1^1   &     \n &      0 &      1 &      0 &      0 &      0 &      0 &      0 &      0 &      0 &      0 &      0 &      0 &      0 &      0 \\
      X_1^2   &      0 &     \n &      1 &      0 &      0 &      0 &      0 &      0 &      0 &      0 &      0 &      0 &      0 &      0 &      0 \\
      X_2^1   &      0 &      0 &      0 &     \n &      0 &      1 &      0 &      0 &      0 &      0 &      0 &      0 &      0 &      0 &      0 \\
      X_2^2   &      0 &      0 &      0 &      0 &     \n &      1 &      0 &      0 &      0 &      0 &      0 &      0 &      0 &      0 &      0 \\
      X_3^1   &      0 &      0 &      0 &      0 &      0 &      0 &     \n &      0 &      1 &      0 &      0 &      0 &      0 &      0 &      0 \\
      X_3^2   &      0 &      0 &      0 &      0 &      0 &      0 &      0 &     \n &      1 &      0 &      0 &      0 &      0 &      0 &      0 \\
      X_4^1   &      0 &      0 &      0 &      0 &      0 &      0 &      0 &      0 &      0 &     \n &      0 &      1 &      0 &      0 &      0 \\
      X_4^2   &      0 &      0 &      0 &      0 &      0 &      0 &      0 &      0 &      0 &      0 &     \n &      1 &      0 &      0 &      0 \\
      X_5^1   &      0 &      0 &      0 &      0 &      0 &      0 &      1 &      0 &      0 &      0 &      0 &      0 &      1 &      0 &      0 \\
      X_5^2   &      0 &      0 &      0 &      0 &      0 &      0 &      0 &      0 &      0 &      1 &      0 &      0 &      1 &      0 &      0 \\
      X_6^1   &      0 &      0 &      0 &      1 &      0 &      0 &      0 &      0 &      0 &      0 &      0 &      0 &      0 &      1 &      0 \\
      X_6^2   &      0 &      0 &      0 &      0 &      0 &      0 &      0 &      0 &      0 &      0 &      1 &      0 &      0 &      1 &      0 \\
      X_7^1   &      0 &      0 &      0 &      0 &      1 &      0 &      0 &      0 &      0 &      0 &      0 &      0 &      0 &      0 &      1 \\
      X_7^2   &      0 &      0 &      0 &      0 &      0 &      0 &      0 &      1 &      0 &      0 &      0 &      0 &      0 &      0 &      1 \\
      X_8^1   &      1 &      0 &      0 &      0 &      0 &      0 &      0 &      0 &      0 &      0 &      0 &      0 &      0 &      0 &     \n \\
      X_8^2   &      0 &      0 &      0 &      0 &      0 &      0 &      0 &      0 &      0 &      0 &      0 &      1 &      0 &      0 &     \n \\
      X_9^1   &      0 &      1 &      0 &      0 &      0 &      0 &      0 &      0 &      0 &      0 &      0 &      0 &      0 &     \n &      0 \\
      X_9^2   &      0 &      0 &      0 &      0 &      0 &      0 &      0 &      0 &      1 &      0 &      0 &      0 &      0 &     \n &      0 \\
      X_{10}^1&      0 &      0 &      1 &      0 &      0 &      0 &      0 &      0 &      0 &      0 &      0 &      0 &     \n &      0 &      0 \\
      X_{10}^2&      0 &      0 &      0 &      0 &      0 &      1 &      0 &      0 &      0 &      0 &      0 &      0 &     \n &      0 &      0 \\
    }
  \]

  \[
    \setlength{\arraycolsep}{2pt}
    \renewcommand{\arraystretch}{1.0}
    \tiny
    d_1(K_5) =
    \kbordermatrix{
          & X_1^1 & X_1^2 & X_2^1 & X_2^2 & X_3^1 & X_3^2 & X_4^1 & X_4^2 & X_5^1 & X_5^2 & X_6^1 & X_6^2 & X_7^1 & X_7^2 & X_8^1 & X_8^2 & X_9^1 & X_9^2 & X_{10}^1 & X_{10}^2 \\
      Y_1 &     1 &     0 &     0 &     0 &    \n &     0 &     0 &     1 &    \n &     0 &     0 &     1 &     0 &     0 &     1 &    \n &     0 &     1 &       \n &        0 \\
      Y_2 &     0 &     1 &     0 &     0 &     0 &     0 &    \n &     1 &     0 &    \n &     0 &     1 &     0 &     0 &     0 &     0 &     1 &     0 &       \n &        0 \\
      Y_3 &     0 &     0 &     1 &     0 &    \n &     0 &     0 &     0 &    \n &     0 &     1 &     0 &     0 &     0 &     0 &     0 &     0 &     1 &        0 &       \n \\
      Y_4 &     0 &     0 &     0 &     1 &     0 &     0 &    \n &     0 &     0 &    \n &     0 &     0 &     1 &     0 &     0 &     1 &     0 &     0 &        0 &       \n \\
      Y_5 &     0 &     0 &     0 &     0 &     0 &     1 &     0 &    \n &     0 &     0 &     0 &    \n &     0 &     1 &     0 &     1 &     0 &    \n &        0 &        0
    }
  \]

  It is easily verified that $d_1\circ d_2 =0$, $\dim \ker d_1(K_5) = 15$, and $\mathrm{rank}\,  d_2(K_5) = 15$.

  Let  $g = W_{18}+W_{19}+W_{1,10}+W_{26}-W_{27}-W_{2,10}+W_{35}+W_{37}+W_{39}+W_{45}+W_{46}+W_{48}-W_{5,10}-W_{69}+W_{78}\in C_2(K_5)$ and $h = X_2^1-X_7^2-X_9^1-X_9^2 + X_{10}^1 \in C_1(K_5)$.
  We note that $h\notin \im d_2$, $d_2(g) = 2h$ and $d_1(h) = 0$, so $h$ generates $\bbZ_2$-torsion in $H_{1}(K_5;\bbZ)$.
\end{proof}

\begin{conjecture}
  \label{conj.k5}
  The $q$-degree zero homology of $K_5$ over $\bbC$ is
  \[
    H_0(K_5;\bbC) = \S{1^5},
    \quad
    H_1(K_5;\bbC) = \S{21^3}^{\oplus6},
    \quad
    H_2(K_5;\bbC) = \S{31^2}^{\oplus 11} \oplus \S{32}^{\oplus 5},
    \quad
    H_3(K_5;\bbC) = \S{41}^{\oplus 6}.
  \]
\end{conjecture}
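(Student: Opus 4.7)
The plan is to compute $H_i(K_5;\bbC)\big|_{\calS_\lambda}$ for every partition $\lambda\vdash 5$ and every $i\ge 0$, using the Specht-module restriction procedure of Section~\ref{sec.restrict} generalized to all shapes. Since the conjecture names only the five partitions $(1^5),(2,1^3),(3,1^2),(3,2),(4,1)$, proving it also requires showing that $\calS_{(5)}$ and $\calS_{(2,2,1)}$ appear with multiplicity zero in every $H_i$, and that $H_i(K_5;\bbC)=0$ for $i\ge 4$.

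First I would compute $C_i(K_5)$ as an $\fS_5$-representation for each $i$. Each spanning subgraph $F$ of $K_5$ with $|F|=i$ and component sizes $\mu(F)$ contributes a permutation module $\calM_{\mu(F)}$, so the multiplicity $N_i^\lambda$ of $\calS_\lambda$ in $C_i(K_5)$ equals $\sum_{|F|=i} K_{\lambda,\mu(F)}$. Since $\mu(F)$ takes only finitely many values in each degree, this is a small combinatorial count. The alternating sums $\sum_i (-1)^i N_i^\lambda$ then give the Euler characteristic of each restricted complex, which must match the alternating sum of the conjectured Betti numbers, furnishing a sharp consistency check and isolating the degrees in which non-trivial homology can appear.

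Second, for each $\lambda\vdash 5$ I would set up $C_\bullet(K_5)\big|_{\calS_\lambda}$ by choosing a cyclic generator of every copy of $\calS_\lambda$ inside each $\calM_F$. For hook and two-column shapes the recipe of Section~\ref{sec.restrict} applies verbatim: standardize each $Y\in\SSYT(\lambda,\mu(F))$ against the numbering $T(F)$ to obtain a numbering $X$, and take $v_X^{Y_1}$ as the generator. For $\calS_{(3,2)}$ inside $\calM_F$ with $\mu(F)\in\{(3,1,1),(2,2,1)\}$ the generator is not of the form $v_T^S$ and must be written as an explicit $\bbC$-linear combination in $\bbC[\fS_5]\cdot a_{T(F)}$, mimicking the $\lambda=(3,1)$ case in the remark after Example~\ref{eg.210}. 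With generators fixed, each edge map becomes an integer matrix whose entries are computed by iterated application of the straightening laws of Corollary~\ref{cor.straighten}, and the multiplicity of $\calS_\lambda$ in $H_i$ is extracted by rank--nullity. To keep the matrix sizes manageable I would exploit the $\mathrm{Aut}(K_5)=\fS_5$ action permuting spanning subgraphs (reducing to one representative per orbit) and, for the top degrees, pass to the broken circuit complex of~\cite{Chan19,CS19}, whose homological width is $n-1=4$ and therefore forces $H_i(K_5;\bbC)=0$ for $i\ge 4$ automatically once one checks that there are no top-dimensional cycles.

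The main obstacle is constructing and verifying the generators of $\calS_{(3,2)}$ inside $\calM_{(3,1,1)}$ and $\calM_{(2,2,1)}$, where $K_{(3,2),(2,2,1)}=3$ copies appear in each spanning subgraph of matching type. These cannot be written as a single $v_T^S$; each requires an explicit sum over a coset in the group algebra, and keeping track of signs as the straightening laws are applied along all edges of the Boolean lattice $\calB(K_5)$ is where the bookkeeping is heaviest. Once this is carried out uniformly, the remaining rank computations are mechanical, and the conjecture reduces to a finite check which, modulo the scale of the matrices, can be handled either by hand for the smaller Specht modules or by automating group-algebra arithmetic in $\bbC[\fS_5]$.
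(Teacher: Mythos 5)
First, note that the statement you are proving is stated in the paper only as a \emph{conjecture}: the authors do not prove it, and their supporting evidence is the machine computation of the $\bbZ$-module structure $H_*(K_5;\bbZ)=\bbZ,\ \bbZ^{24}\oplus\bbZ_2^{5},\ \bbZ^{91},\ \bbZ^{24}$ (graph $31$ in Appendix~\ref{computations}), whose free ranks are merely \emph{consistent} with the conjectured Specht decomposition ($24=6\cdot\dim\S{21^3}$, $91=11\cdot\dim\S{31^2}+5\cdot\dim\S{32}$, $24=6\cdot\dim\S{41}$). Your proposal is a sensible program for upgrading this to a theorem, and it follows the paper's own methodology from Section~\ref{sec.restrict}, but as written it is an outline of a computation rather than a proof: no differential matrices are produced, no ranks or kernels are computed, and the multiplicities in the conjecture are never actually derived. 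The Euler-characteristic check you describe constrains only alternating sums, so it cannot by itself pin down the individual Betti numbers, and everything therefore hinges on the part you defer.

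The deferred part is also where your write-up contains concrete errors. You assert that $K_{(3,2),(2,2,1)}=3$; in fact $K_{(3,2),(2,2,1)}=2$ and $K_{(3,2),(3,1,1)}=1$, so the bookkeeping for the $\S{32}$-isotypic component has not actually been set up. More importantly, the paper's standardization recipe for generators $v_{X}^{Y_1}$ is stated (and justified) only for $\lambda=(2^k,1^{n-2k})$ with $\mu=(2,1^{n-2})$ or $(2^2,1^{n-4})$; for $\lambda=(3,2)$ and $\lambda=(4,1)$ inside $\calM_{(2,2,1)}$ and $\calM_{(3,1,1)}$ the generators are genuinely not of the form $v_T^S$ (as the paper's own remark after Example~\ref{eg.210} warns), and you give no construction of them, only the statement that one is needed. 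Finally, your appeal to the broken circuit model misquotes its width: the paper states the broken circuit complex has homological width $n$ (chain groups in degrees $0$ through $n-1=4$ for $K_5$), so it forces $H_i(K_5;\bbC)=0$ only for $i\ge 5$, and the vanishing of $H_4$ — which is part of the conjecture — still requires an explicit rank computation in top degree. Until the generators for all seven partitions of $5$ are constructed and the resulting matrices are straightened and row-reduced, the statement remains exactly what the paper calls it: a conjecture.
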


\begin{conjecture}
  \label{conj.k33}
  The $q$-degree zero homology of $K_{3,3}$ over $\bbC$ is
  \[
    H_0(K_{3,3};\bbC) = \S{1^6},
    \quad
    H_1(K_{3,3};\bbC) = \S{21^4}^{\oplus4}\oplus \S{2^3},
    \quad
    H_2(K_{3,3};\bbC) = \S{31^3}^{\oplus10} \oplus \S{321}^{\oplus4},
  \]
  \[
    H_3(K_{3,3};\bbC) = \S{3^2}\oplus \S{41^2}^{\oplus11}\oplus \S{42}^{\oplus6},
    \quad
    H_4(K_{3,3};\bbC) = \S{51}^{\oplus5}.
  \]
\end{conjecture}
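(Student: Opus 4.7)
The plan to establish Conjecture~\ref{conj.k33} is to follow the procedure developed in Section~\ref{sec.restrict} systematically for every partition $\lambda \vdash 6$ and every homological degree $i$, and then read off the irreducible decomposition of $H_i(K_{3,3};\bbC)$ for $i = 0, \ldots, 4$. Since $K_{3,3}$ has $n = 6$ vertices and $m = 9$ edges, the chromatic symmetric chain complex $C_\bullet(K_{3,3})$ a priori lives in homological degrees $0$ through $9$; the broken circuit model of~\cite{Chan19,CS19} is quasi-isomorphic and has homological width only $n = 6$, so in practice it suffices to work with a complex supported in degrees $0, \ldots, 5$. For each pair $(i,\lambda)$, the multiplicity of $\calS_\lambda$ in $C_i(K_{3,3})\big|_{\calS_\lambda}$ is a sum of Kostka numbers $K_{\lambda,\mu}$, where $\mu$ ranges over the partitions of $6$ arising as the connected-component type of an $i$-edge spanning subgraph of $K_{3,3}$.

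Once the chain modules are fixed, each differential $d_i\big|_{\calS_\lambda}$ is encoded by an integer matrix whose entries come from applying the straightening laws of Corollary~\ref{cor.straighten} to the images of the cyclic generators $v_T^{Y_1}$, exactly as in the proofs of Lemmas~\ref{lemma.G1} through~\ref{lemma.G4}. Computing $\dim \ker d_i\big|_{\calS_\lambda}$ and $\mathrm{rank}\, d_{i+1}\big|_{\calS_\lambda}$ for every $(i,\lambda)$ then yields
\[
  [H_i(K_{3,3};\bbC) : \calS_\lambda] = \dim \ker d_i\big|_{\calS_\lambda} - \mathrm{rank}\, d_{i+1}\big|_{\calS_\lambda}.
\]
Two global consistency checks will accompany the calculation: the Euler--Frobenius identity $\Frob_{K_{3,3}}(1,1) = X_{K_{3,3}}$ pins down the alternating sum of the graded Frobenius characteristics, and, assuming the pattern already visible in Conjecture~\ref{conj.k5} that $H_0$ of a connected graph on $n$ vertices is $\calS_{(1^n)}$, the degree-zero computation must collapse to a single irreducible summand.

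The main obstacle is computational scale, not conceptual novelty. Even after restricting to a single Specht type, the matrices for $d_i\big|_{\calS_\lambda}$ grow quickly; the worst cases are the ``middle'' partitions such as $\lambda = (3,1^3)$ and $\lambda = (4,1^2)$, where both the Kostka multiplicities and the number of standard tableaux are large, and these are precisely the degrees where the conjectured multiplicities in $H_2$ and $H_3$ are highest. Pairing the broken circuit model with Lampret's steepness matchings~\cite{lampret2019chain}, as already incorporated into the implementation of~\cite{code}, appears to be the most promising route to making the full decomposition tractable, after which the irreducible content of each $\bbC[\fS_6]$-module $\ker d_i / \im d_{i+1}$ can be extracted by standard character projections against the irreducible characters $\chi^\lambda$.
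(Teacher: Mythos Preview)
The statement you are attempting is labelled a \emph{conjecture} in the paper, and the paper offers no proof of it; it is presented as a guess suggested by the $\bbZ$-module computations in Appendix~\ref{computations} (graph~131 gives only the ranks $1,25,164,169,25$ of the homology groups, not their $\bbC[\fS_6]$-module structure). So there is no ``paper's own proof'' to compare against, and your proposal is not a proof but a computational plan.

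That said, your plan contains a genuine gap. The procedure of Section~\ref{sec.restrict} is written specifically for two-column shapes $\lambda=(2^k,1^{n-2k})$ and for $H_1$; its correctness relies on the fact that the standardisation of $\SSYT(\lambda,\mu)$ with respect to $T(F)$ always places the edge(s) of $F$ into the top row(s) of the numbering, so that $v_{X}^{Y_1}$ lies in $\calM_F$. The Remark immediately following Example~\ref{eg.210} shows explicitly that for other shapes (there $\lambda=(3,1)$, $\mu=(2,2)$) no single polytabloid $v_T^S$ generates the copy of $\calS_\lambda$ inside $\calM_F$, and one must take nontrivial linear combinations obtained via straightening. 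Extending the method ``systematically for every partition $\lambda\vdash 6$ and every homological degree'' therefore requires you to solve exactly the problem the paper flags as unresolved: producing, uniformly and compatibly with the inclusion maps $d_\varepsilon$, explicit generators for each isotypic component of each permutation module $\calM_\mu$. Neither the broken circuit model nor Lampret's matchings address this; they reduce the size of the complex but not the difficulty of isolating Specht submodules. Until that step is supplied, the plan does not amount to a proof of the conjecture.
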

\begin{remark}
  Our computations show that the homology of the nonplanar graph $G_6$ in Figure~\ref{fig.pairs_three} (and graph number 137 in Appendix~\ref{computations}) contains $\bbZ_2$-torsion, namely, $H_1(G_6;\bbZ) = \bbZ^{30} \oplus\bbZ_2^9$.
  It is generated by a linear combination of polytabloids of shape $(2^2,1^2)$.
  Also see remark~\ref{rem.G6}.
\end{remark}

\subsection{Torsion and Planarity}
\label{infinitefamily}
The pair of graphs in Figure~\ref{fig.pairs_three} appears in Chow's thesis~\cite[Section 6]{Cho95} as an example showing that the chromatic symmetric function does not detect planarity of graphs.
In Section~\ref{sectorsion} we showed that the chromatic symmetric homology of the nonplanar graphs $K_5$ and $K_{3,3}$ contains torsion.
Here, we conjecture that the appearance of $\bbZ_2$-torsion in bidegree $(1,0)$ chromatic symmetric homology detects nonplanarity of graphs.
Assuming that the following conjecture is true, then based on a theorem of Orellana and Scott~\cite[Theorem 4.2]{OS14}, we are able to construct an infinite family of pairs of graphs with equal chromatic symmetric function but distinct homology.

\begin{conjecture}
  \label{tor.conj}
  A graph $G$ is nonplanar if and only if the $q$-degree zero homology $H_{1}(G;\bbZ)$ contains $\bbZ_2$-torsion.
\end{conjecture}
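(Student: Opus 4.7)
The plan is to prove the two directions separately, using Kuratowski's theorem as the organizing tool.

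For the forward direction (nonplanar implies $\bbZ_2$-torsion), I would first establish two monotonicity lemmas. The first asserts that if $H$ is a subgraph of $G$ obtained by adding edges, and $H_1(H;\bbZ)$ contains $\bbZ_2$-torsion, then so does $H_1(G;\bbZ)$. The idea is to take the explicit torsion-generating cycles constructed in the proofs of Theorems~\ref{thm.K5} and~\ref{thm.K33} (specific linear combinations of polytabloids $v_T^Y$ supported on spanning subgraphs with edges lying in $H$), view them as elements of $C_1(G)$, and verify that they remain cycles in $G$ whose class has order two, using the fact that $d_1$ restricted to the relevant summand matches $d_1$ for $H$. The second lemma asserts that edge-subdivision preserves the presence of $\bbZ_2$-torsion in $H_1$, which is consistent with the remark that $G_6$, an edge-subdivision of $K_5$, carries torsion. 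Combined with Theorems~\ref{thm.K5} and~\ref{thm.K33} and Kuratowski's characterization of nonplanar graphs as those containing a subdivision of $K_5$ or $K_{3,3}$, these lemmas yield the forward direction.

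For the backward direction (planar implies torsion-free $H_1$), I would attempt induction on the number of edges, with base cases (forests, simple cycles) checked directly. For the inductive step, I would develop a deletion/contraction long exact sequence in the spirit of the edge-resolution long exact sequence in Khovanov homology: for each edge $e$ of $G$, a short exact sequence of chain complexes relating $C_\ast(G)$ to $C_\ast(G-e)$ and $C_\ast(G/e)$ (with appropriate homological shifts coming from the sign conventions on $\calB(G)$). When $G$ is planar and $e$ is neither a bridge nor a loop, both $G-e$ and $G/e$ are planar and have strictly fewer edges, so by induction neither contributes $\bbZ_2$-torsion to $H_1$. Bridges and loops can be handled separately via the product formula for disconnected graphs and a direct computation.

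The main obstacle is the backward direction. Torsion is genuinely extension-theoretic: even when both outer terms in a short exact sequence of $\bbZ[\fS_n]$-modules are torsion-free, a nontrivial class in $\mathrm{Ext}^1$ can introduce torsion in the middle term, so simply knowing that $H_1(G-e)$ and $H_1(G/e)$ are torsion-free is not enough. A promising refinement is to perform the entire analysis in the broken circuit model of Chandler--Sazdanovic~\cite{Chan19,CS19}, whose drastically smaller chain complex may admit an explicit description of low-degree boundary matrices in terms of the cycle space of $G$. For planar $G$, one could then try to show, using the planar dual or an ear decomposition together with Euler's formula $V-E+F=2$, that the relevant boundary matrix has Smith normal form with all elementary divisors equal to $1$, ruling out $\bbZ_2$-torsion altogether. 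This is where the planarity hypothesis would need to be used in an essential, as opposed to merely hereditary, way.
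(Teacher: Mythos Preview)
The statement you are trying to prove is stated in the paper as a \emph{conjecture}, not a theorem. The paper offers no proof; it only reports that the conjecture has been verified by direct computation on all $143$ connected graphs with at most six vertices. So there is no ``paper's own proof'' to compare against --- you are attempting an open problem.

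That said, your plan has genuine gaps on both sides. For the forward direction, your first monotonicity lemma is not obvious: if $H\subseteq G$ and $h\in C_1(H)$ is a cycle with $2h=d_2^H(g)$ and $h\notin\im d_2^H$, then when you pass to $G$ the chain module $C_2(G)$ strictly contains $C_2(H)$, and the enlarged image $\im d_2^G$ may well contain $h$, killing the torsion class. Nothing in your sketch rules this out. The subdivision lemma has the same issue; the observation that $G_6$ happens to have torsion is a single data point, not an argument.

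For the backward direction, your main tool --- a deletion/contraction long exact sequence --- is explicitly ruled out by the paper. In the remark following Conjecture~\ref{spanconj} the authors note that such a sequence exists for chromatic (Helme-Guizon--Rong) homology but \emph{does not exist for chromatic symmetric homology}. This is because contraction changes the number of vertices, hence the symmetric group acting, and there is no evident functorial relationship between $\fS_n$-modules and $\fS_{n-1}$-modules that would produce the required short exact sequence of chain complexes. Your fallback to the broken circuit model is reasonable as a computational simplification, but you give no indication of how planarity would translate into a statement about Smith normal forms of the relevant matrices; invoking Euler's formula at that point is aspirational rather than a plan.
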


This conjecture has been computationally verified on all $143$ connected graphs on up to $6$ vertices (see Section \ref{computations}).

\begin{remark}
  In the case of Khovanov homology, the question (2) from the introduction is quite obvious: $\bbZ_2$-torsion appears to arise in the Khovanov homology of any nontrivial knot.
  In 2011, Kronheimer and Mrowka \cite{Kron2011} used gauge theory to show that Khovanov homology detects whether a knot is trivial, a question which is still open for the Jones polynomial.
  The abundance of $\bbZ_2$-torsion in Khovanov homology led Shumakovitch to conjecture that in fact it is the presence of $\bbZ_2$-torsion which detects nontriviality \cite{S1}.
  Conjecture~\ref{tor.conj} can be seen as an analogue of Shumakovitch's conjecture in the case of chromatic symmetric homology.
\end{remark}

In what follows, it will be convenient to have a graph $G$ in mind which will undergo some ``local'' modifications.
We will use diagrams such as \usebox\graphUZVW \ and \usebox\graphUWUZVWWZ \ to denote graphs which are identical outside of the dotted region, but which differ as shown inside the dotted region.
In other words, we are fixing a graph and considering different possibilities for the induced subgraph on the indicated subset of its vertices.
In this notation, we restate a result of Orellana and Scott.

\begin{theorem}{\cite[Theorem 4.2]{OS14}}
  \label{orellana}
  Let $G$ be a graph, and let $u,v,w,z$ be vertices of $G$. Suppose there is an automorphism $f$ on the graph $G=\usebox\graphUZVWlabeled$ such that $f(\{u,w\})=\{v,z\}$ and $f(\{v,z\})=\{u,w\}$.
  Then the graphs \usebox\graphUWUZVWWZ \ and \usebox\graphUZVWVZWZ \ have the same chromatic symmetric function.
\end{theorem}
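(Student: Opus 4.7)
The plan is to use the power-sum subgraph expansion
\[
  X_{H} = \sum_{S \subseteq E(H)} (-1)^{|S|} p_{\lambda(V, S)},
\]
where $\lambda(V, S)$ is the partition recording the sizes of the connected components of the spanning subgraph $(V, S)$. Let $G_1 := G \cup \{uw, wz\}$, $G_2 := G \cup \{vz, wz\}$, and $H := G \cup \{wz\}$, so that $E(G_1)$ and $E(G_2)$ differ only by the swap $uw \leftrightarrow vz$. Splitting each expansion according to whether the distinguishing edge is included, the contributions from subsets $S \subseteq E(H)$ cancel termwise, reducing the claim $X_{G_1} = X_{G_2}$ to the identity
\[
  \sum_{T \subseteq E(H)} (-1)^{|T|} p_{\lambda(V, T \cup \{uw\})} = \sum_{T \subseteq E(H)} (-1)^{|T|} p_{\lambda(V, T \cup \{vz\})}.
\]

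To establish this identity, I would construct a cardinality-preserving bijection $\phi \colon 2^{E(H)} \to 2^{E(H)}$ such that $\lambda(V, \phi(T) \cup \{uw\}) = \lambda(V, T \cup \{vz\})$ for every $T \subseteq E(H)$. The natural tool is the automorphism $f$: because $f$ swaps the pairs $\{u,w\}$ and $\{v,z\}$ and preserves the component-size partition of every spanning subgraph of $G$, it should carry the ``$vz$-side'' of the structure to the ``$uw$-side.'' Heuristically, $\phi(T)$ would apply $f$ to the edges of $T$ that lie in the part of the graph reachable from $\{v, z\}$ without using the edge $wz$, while leaving the rest fixed.

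The main obstacle is making this heuristic precise, since $f$ need not be an automorphism of $H$ (it may fail to fix $wz$), and in a given subgraph $T$ the vertices $u, v, w, z$ may all lie in one connected component through paths outside the dotted region, entangling the two ``sides.'' I would handle this by a case analysis on the action of $f$ on $\{w, z\}$ and on the connectivity pattern of $T$ restricted to $\{u, v, w, z\}$: in the easy case $f(\{w, z\}) = \{w, z\}$, the map $f$ already conjugates $G_1$ to $G_2$ as a graph isomorphism; in the remaining cases one must combine $f$ with a local surgery near $wz$ to define $\phi$. Verifying that $\phi$ is a well-defined bijection preserving component-size partitions will rest on the fact that $f$ is an automorphism of $G$ together with the observation that both $uw$ and $vz$ join one vertex of $\{u, w\}$ to one vertex of $\{v, z\}$, so the ``merge'' they cause between the two sides has the same combinatorial effect under the $f$-twist.
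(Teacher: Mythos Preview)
The paper does not prove this theorem; it is quoted from Orellana and Scott~\cite{OS14} and used as a black box to manufacture pairs of graphs with equal chromatic symmetric function. There is therefore no proof here to compare your attempt against.

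On your sketch itself: the reduction via the power-sum subgraph expansion is correct, and for $T\subseteq E(H)$ with $wz\notin T$ the assignment $\phi(T)=f(T)$ already does the job, since then $T\subseteq E(G)$, $f$ permutes $E(G)$, and $f(\{v,z\})=\{u,w\}$ gives $\lambda(V,f(T)\cup\{uw\})=\lambda(V,T\cup\{vz\})$. The genuine gap is the stratum $wz\in T$. In the only nontrivial case (the remark following the theorem isolates $u\leftrightarrow z$, $v\leftrightarrow w$), one has $f(\{w,z\})=\{u,v\}\notin E(H)$, so $f$ cannot be applied to $T$. The natural repair $\phi(T'\cup\{wz\})=f(T')\cup\{wz\}$ does not work: unwinding, it would require
\[
\lambda\bigl(V,\,f(T')\cup\{uw,wz\}\bigr)=\lambda\bigl(V,\,f(T')\cup\{uw,uv\}\bigr),
\]
i.e.\ that merging the component of $z$ into that of $\{u,w\}$ has the same effect on sizes as merging the component of $v$; a small example (add to $G$ two leaves $a,b$ with edges $va,wb$ and let $f$ swap $a\leftrightarrow b$, then take $T'=\{va,uz\}$) shows this fails. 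Your proposed ``local surgery near $wz$'' and connectivity case analysis are left entirely unspecified, and that is exactly where the substance of the argument must live. As written, the proposal is a plan whose hard step has not been carried out.
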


\begin{remark}
  Theorem \ref{orellana} involves four possible types of maps via the requirement that $\{u,w\}\xleftrightarrow[]{f}\{v,z\}$: horizontal reflection, vertical reflection, $90$ degree rotation clockwise, and $90$ degree rotation counterclockwise.
  The two rotations cannot possibly be automorphisms, so only the reflections need be considered.
  However, in the case of vertical reflection $(u\leftrightarrow v, z\leftrightarrow w)$, it is clear that the graphs \usebox\graphUWUZVWWZ \ and \usebox\graphUZVWVZWZ \ are isomorphic.
  Thus, of the four maps possible in Theorem \ref{orellana}, the only maps for which the statement is nontrivial and nonvacuous are the automorphisms via horizontal reflection, interchanging $u\leftrightarrow z$ and $v\leftrightarrow w$.
\end{remark}

Let us suppose for the moment that our Conjecture \ref{tor.conj} holds true.
Then to construct pairs of graphs with equal chromatic symmetric function, but distinct homology, it will suffice to find graphs \usebox\graphUZVWlabeled with an automorphism via horizontal reflection such that \usebox\graphUWUZVWWZ \ is planar but  \usebox\graphUZVWVZWZ \ is not.

The graph $G$ in Figure~\ref{fig.inf.family} has an automorphism via horizontal reflection. 
The graphs $G_7$ and $G_8$ are constructed by adding edges to $G$, where the circles labeled $P_1$ and $P_2$ indicate the attachment of horizontally symmetric planar graphs attached to the left and rightmost vertices of $G$.
By~\cite[Theorem 4.2]{OS14}, the graphs $G_7$ and $G_8$ have identical chromatic symmetric functions.
One can check that $G_7$ contains a copy of $K_{3,3}$, while $G_8$ is planar, hence conjecturally, they are distinguished by chromatic symmetric homology.

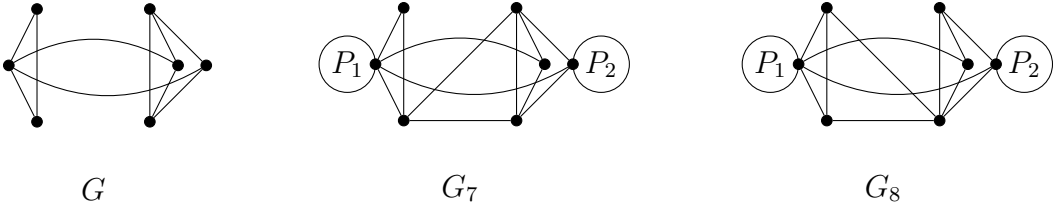
\begin{figure}[ht]
  \centering
  \begin{tikzpicture}[scale=.75]
    \node at (0,0){\usebox\infinitezero};
    \node at (6.5,0){\usebox\infiniteone};
    \node at (14,0){\usebox\infinitetwo};
  \end{tikzpicture}
  \caption{
    The construction of an infinite family of pairs of graphs which are conjecturally distinguished by chromatic symmetric homology, but are known to not be distinguished by the chromatic symmetric function.
    }
  \label{fig.inf.family}
\end{figure}
The following conjecture would follow if Conjecture~\ref{tor.conj} holds.

\begin{conjecture}
  \label{infinitefamilyconj}
  Each pair of graphs constructed as in Figure \ref{fig.inf.family} is not distinguished by the chromatic symmetric function but is distinguished by chromatic symmetric homology.
\end{conjecture}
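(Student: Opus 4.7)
The plan combines three ingredients, each already latent in the preceding discussion. First, I apply Theorem~\ref{orellana} to obtain $X_{G_7}=X_{G_8}$. Identify the four ``dotted region'' vertices $u,v,w,z$ of the theorem with the four central vertices of the base graph $G$ in Figure~\ref{fig.inf.family}, placing $u,v$ on top and $z,w$ on the bottom so that the two vertical edges $u-z$ and $v-w$ are exactly the edges of $G$ lying in that region. Under this labelling, one graph of the pair realises the pattern \usebox\graphUWUZVWWZ and the other realises \usebox\graphUZVWVZWZ. The symmetry $f$ required by Theorem~\ref{orellana}, interchanging $\{u,w\}$ with $\{v,z\}$, is the horizontal reflection of $G$ across its axis of symmetry; this is a graph automorphism precisely because the attached planar blobs $P_1$ and $P_2$ are by construction horizontally symmetric. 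Hence Theorem~\ref{orellana} gives $X_{G_7}=X_{G_8}$.

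Second, I verify the planarity assertions. The graph $G_7$ contains an honest $K_{3,3}$-subgraph on the bipartition $\{v_2,v_4,v_5\}\sqcup\{v_1,v_6,v_7\}$, since every one of these nine pairs is an edge of $G_7$ as one reads off from Figure~\ref{fig.inf.family}; by Kuratowski's theorem, $G_7$ is nonplanar. A planar embedding of $G_8$ can be produced directly from the picture: the blobs $P_1,P_2$ are planar by hypothesis and are attached only at the two extremal vertices, while the local change from $G_7$ to $G_8$ replaces the pair of crossing-forcing edges $v_1v_4,\,v_1v_2$ by $v_2v_3,\,v_1v_2$, whose endpoints allow the remaining edges of the central configuration to be routed without crossings. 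Hence $G_8$ is planar.

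Finally, invoking Conjecture~\ref{tor.conj}, which is the standing hypothesis, the integral $q$-degree zero homology $H_{1}(G_7;\bbZ)$ contains $\bbZ_2$-torsion while $H_{1}(G_8;\bbZ)$ does not. Consequently the two bigraded $\bbZ[\fS_n]$-modules are not isomorphic, and combined with the equality of chromatic symmetric functions from the first step this proves the conjecture. The only real difficulty in this plan is the reliance on Conjecture~\ref{tor.conj} itself, which the plan does not address; given that conjecture, the Orellana--Scott construction mechanically produces an infinite family of pairs distinguished by chromatic symmetric homology but not by the chromatic symmetric function, parameterised by the choice of horizontally symmetric planar blobs $P_1,P_2$.
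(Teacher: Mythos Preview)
Your proposal is correct and follows essentially the same reasoning the paper gives in the discussion leading up to the conjecture: Orellana--Scott (Theorem~\ref{orellana}) applied via the horizontal-reflection automorphism gives $X_{G_7}=X_{G_8}$; the explicit $K_{3,3}$ on $\{v_2,v_4,v_5\}\sqcup\{v_1,v_6,v_7\}$ shows $G_7$ is nonplanar while $G_8$ is planar; and Conjecture~\ref{tor.conj} then forces $\bbZ_2$-torsion into $H_1(G_7;\bbZ)$ but not $H_1(G_8;\bbZ)$. The paper likewise leaves this statement as a conjecture precisely because of the dependence on Conjecture~\ref{tor.conj}, verifying only the case of trivial $P_1,P_2$ by direct computation; your explicit identification of $u,v,w,z$ with $v_3,v_4,v_1,v_2$ and of the $K_{3,3}$ bipartition simply supplies details the paper leaves to the reader.
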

We verified computationally Conjecture \ref{infinitefamilyconj} when $P_1$ and $P_2$ are trivial in which case, as $\bbZ$-modules, we get the following homology groups:

\begin{center}
  \begin{tabular}{|Sc|Sc|Sc|}
    \hline
    $i$ & $H_i(G_7;\bbZ)$ as a $\bbZ$-module & $H_i(G_8;\bbZ)$ as a $\bbZ$-module \\
    \hline
    \hline
    $0$ & $\bbZ$                             & $\bbZ$                             \\
    \hline
    $1$ & $\bbZ^{50} \oplus \bbZ_2^{14}$     & $\bbZ^{64}$                        \\
    \hline
    $2$ & $\bbZ^{787}$                       & $\bbZ^{857}$                       \\
    \hline
    $3$ & $\bbZ^{1514}$                      & $\bbZ^{1570}$                      \\
    \hline
    $4$ & $\bbZ^{496}$                       & $\bbZ^{496}$                       \\
    \hline
    $5$ & $\bbZ^{36}$                        & $\bbZ^{36}$                        \\
    \hline
  \end{tabular}
\end{center}

\subsection{Other types of torsion}
\label{z3torsion}

In the case of Khovanov homology, it was conjectured in \cite{Kh} that odd torsion does not exist.
It was not until 7 years later that Bar-Natan~\cite{Bar} found a counter-example: the $(5,6)$-torus knot with 24 crossings was shown to have $\bbZ_3$-torsion in homology.
For chromatic symmetric homology, we are able to computationally verify that the star graph on 7 vertices has $\ZZ_3$-torsion in bidegree $(1,0)$.

Below is a chart showing the computed homology over $\bbZ$ of the star graphs on $n=4,\ldots, 7$ vertices, and the conjectured homology over $\bbC$ as a $\bbC[\fS_n]$-module.
We use the same notation to list $H_*(G)$ as in Appendix~\ref{computations}.

\begin{center}
\begin{tabular}{|Sc|>{\centering\arraybackslash}m{5.7cm}|>{\centering\arraybackslash}m{5.7cm}|}
	\hline
  $G$ & $H_*(G;\bbZ)$ as a $\bbZ$-module &  $H_*(G;\bbC)$ as a $\bbC[\fS_n]$-module
	\\
  \hline\hline
  $\begin{tikzpicture}[scale=0.18, baseline={([yshift=-0.3em]current bounding box.center)}]
    \SetVertexSimple[MinSize=3pt, InnerSep=0pt]
    \definecolor{cv0}{rgb}{0.0,0.0,0.0}
    \definecolor{cfv0}{rgb}{1.0,1.0,1.0}
    \definecolor{clv0}{rgb}{0.0,0.0,0.0}
    \definecolor{cv1}{rgb}{0.0,0.0,0.0}
    \definecolor{cfv1}{rgb}{1.0,1.0,1.0}
    \definecolor{clv1}{rgb}{0.0,0.0,0.0}
    \definecolor{cv2}{rgb}{0.0,0.0,0.0}
    \definecolor{cfv2}{rgb}{1.0,1.0,1.0}
    \definecolor{clv2}{rgb}{0.0,0.0,0.0}
    \definecolor{cv3}{rgb}{0.0,0.0,0.0}
    \definecolor{cfv3}{rgb}{1.0,1.0,1.0}
    \definecolor{clv3}{rgb}{0.0,0.0,0.0}
    \definecolor{cv0v1}{rgb}{0.0,0.0,0.0}
    \definecolor{cv0v2}{rgb}{0.0,0.0,0.0}
    \definecolor{cv0v3}{rgb}{0.0,0.0,0.0}
    \Vertex[NoLabel,x=2.5cm,y=2.5cm]{v0}
    \Vertex[NoLabel,x=2.5cm,y=5.0cm]{v1}
    \Vertex[NoLabel,x=0.0cm,y=1.25cm]{v2}
    \Vertex[NoLabel,x=5.0cm,y=1.25cm]{v3}
    \Vertex[NoLabel,x=2.5cm,y=0cm,empty=true]{empty}
    \Edge[lw=0.1cm,style={color=cv0v1,},](v0)(v1)
    \Edge[lw=0.1cm,style={color=cv0v2,},](v0)(v2)
    \Edge[lw=0.1cm,style={color=cv0v3,},](v0)(v3)
  \end{tikzpicture}$ 
  & $\bbZ,\bbZ^2$
  & $\S{1^4},\S{2^2}$
  \\
  \hline
  $\begin{tikzpicture}[scale=0.18, baseline={([yshift=-0.3em]current bounding box.center)}]
    \SetVertexSimple[MinSize=3pt, InnerSep=0pt]
    \definecolor{cv0}{rgb}{0.0,0.0,0.0}
    \definecolor{cfv0}{rgb}{1.0,1.0,1.0}
    \definecolor{clv0}{rgb}{0.0,0.0,0.0}
    \definecolor{cv1}{rgb}{0.0,0.0,0.0}
    \definecolor{cfv1}{rgb}{1.0,1.0,1.0}
    \definecolor{clv1}{rgb}{0.0,0.0,0.0}
    \definecolor{cv2}{rgb}{0.0,0.0,0.0}
    \definecolor{cfv2}{rgb}{1.0,1.0,1.0}
    \definecolor{clv2}{rgb}{0.0,0.0,0.0}
    \definecolor{cv3}{rgb}{0.0,0.0,0.0}
    \definecolor{cfv3}{rgb}{1.0,1.0,1.0}
    \definecolor{clv3}{rgb}{0.0,0.0,0.0}
    \definecolor{cv4}{rgb}{0.0,0.0,0.0}
    \definecolor{cfv4}{rgb}{1.0,1.0,1.0}
    \definecolor{clv4}{rgb}{0.0,0.0,0.0}
    \definecolor{cv0v1}{rgb}{0.0,0.0,0.0}
    \definecolor{cv0v2}{rgb}{0.0,0.0,0.0}
    \definecolor{cv0v3}{rgb}{0.0,0.0,0.0}
    \definecolor{cv0v4}{rgb}{0.0,0.0,0.0}
    \Vertex[NoLabel,x=2.5cm,y=2.5cm]{v0}
    \Vertex[NoLabel,x=2.5cm,y=5.0cm]{v1}
    \Vertex[NoLabel,x=0.0cm,y=2.5cm]{v2}
    \Vertex[NoLabel,x=2.5cm,y=0.0cm]{v3}
    \Vertex[NoLabel,x=5.0cm,y=2.5cm]{v4}
    \Edge[lw=0.1cm,style={color=cv0v1,},](v0)(v1)
    \Edge[lw=0.1cm,style={color=cv0v2,},](v0)(v2)
    \Edge[lw=0.1cm,style={color=cv0v3,},](v0)(v3)
    \Edge[lw=0.1cm,style={color=cv0v4,},](v0)(v4)
  \end{tikzpicture}$ 
  & $\bbZ,\bbZ^{20}$
  & $\S{1^5}, \S{2^21}^{\oplus3}\oplus\S{32}$
  \\
  \hline
  $\begin{tikzpicture}[scale=0.18, baseline={([yshift=-0.3em]current bounding box.center)}]
    \SetVertexSimple[MinSize=3pt, InnerSep=0pt]
    \definecolor{cv0}{rgb}{0.0,0.0,0.0}
    \definecolor{cfv0}{rgb}{1.0,1.0,1.0}
    \definecolor{clv0}{rgb}{0.0,0.0,0.0}
    \definecolor{cv1}{rgb}{0.0,0.0,0.0}
    \definecolor{cfv1}{rgb}{1.0,1.0,1.0}
    \definecolor{clv1}{rgb}{0.0,0.0,0.0}
    \definecolor{cv2}{rgb}{0.0,0.0,0.0}
    \definecolor{cfv2}{rgb}{1.0,1.0,1.0}
    \definecolor{clv2}{rgb}{0.0,0.0,0.0}
    \definecolor{cv3}{rgb}{0.0,0.0,0.0}
    \definecolor{cfv3}{rgb}{1.0,1.0,1.0}
    \definecolor{clv3}{rgb}{0.0,0.0,0.0}
    \definecolor{cv4}{rgb}{0.0,0.0,0.0}
    \definecolor{cfv4}{rgb}{1.0,1.0,1.0}
    \definecolor{clv4}{rgb}{0.0,0.0,0.0}
    \definecolor{cv5}{rgb}{0.0,0.0,0.0}
    \definecolor{cfv5}{rgb}{1.0,1.0,1.0}
    \definecolor{clv5}{rgb}{0.0,0.0,0.0}
    \definecolor{cv0v1}{rgb}{0.0,0.0,0.0}
    \definecolor{cv0v2}{rgb}{0.0,0.0,0.0}
    \definecolor{cv0v3}{rgb}{0.0,0.0,0.0}
    \definecolor{cv0v4}{rgb}{0.0,0.0,0.0}
    \definecolor{cv0v5}{rgb}{0.0,0.0,0.0}
    \Vertex[NoLabel,x=2.5cm,y=2.2361cm]{v0}
    \Vertex[NoLabel,x=2.5cm,y=5.0cm]{v1}
    \Vertex[NoLabel,x=0.0cm,y=3.0902cm]{v2}
    \Vertex[NoLabel,x=0.9549cm,y=0.0cm]{v3}
    \Vertex[NoLabel,x=4.0451cm,y=0.0cm]{v4}
    \Vertex[NoLabel,x=5.0cm,y=3.0902cm]{v5}
    \Edge[lw=0.1cm,style={color=cv0v1,},](v0)(v1)
    \Edge[lw=0.1cm,style={color=cv0v2,},](v0)(v2)
    \Edge[lw=0.1cm,style={color=cv0v3,},](v0)(v3)
    \Edge[lw=0.1cm,style={color=cv0v4,},](v0)(v4)
    \Edge[lw=0.1cm,style={color=cv0v5,},](v0)(v5)
  \end{tikzpicture}$
  & $\bbZ,\bbZ^{152}$
  & $\S{1^6}, \S{2^21^2}^{\oplus6}\oplus\S{2^3}^{\oplus5}\oplus\S{321}^{\oplus4}\oplus\S{42}$
  \\
  \hline
  $\begin{tikzpicture}[scale=0.18, baseline={([yshift=-0.3em]current bounding box.center)}]
    \SetVertexSimple[MinSize=3pt, InnerSep=0pt]
    \definecolor{cv0}{rgb}{0.0,0.0,0.0}
    \definecolor{cfv0}{rgb}{1.0,1.0,1.0}
    \definecolor{clv0}{rgb}{0.0,0.0,0.0}
    \definecolor{cv1}{rgb}{0.0,0.0,0.0}
    \definecolor{cfv1}{rgb}{1.0,1.0,1.0}
    \definecolor{clv1}{rgb}{0.0,0.0,0.0}
    \definecolor{cv2}{rgb}{0.0,0.0,0.0}
    \definecolor{cfv2}{rgb}{1.0,1.0,1.0}
    \definecolor{clv2}{rgb}{0.0,0.0,0.0}
    \definecolor{cv3}{rgb}{0.0,0.0,0.0}
    \definecolor{cfv3}{rgb}{1.0,1.0,1.0}
    \definecolor{clv3}{rgb}{0.0,0.0,0.0}
    \definecolor{cv4}{rgb}{0.0,0.0,0.0}
    \definecolor{cfv4}{rgb}{1.0,1.0,1.0}
    \definecolor{clv4}{rgb}{0.0,0.0,0.0}
    \definecolor{cv5}{rgb}{0.0,0.0,0.0}
    \definecolor{cfv5}{rgb}{1.0,1.0,1.0}
    \definecolor{clv5}{rgb}{0.0,0.0,0.0}
    \definecolor{cv6}{rgb}{0.0,0.0,0.0}
    \definecolor{cfv6}{rgb}{1.0,1.0,1.0}
    \definecolor{clv6}{rgb}{0.0,0.0,0.0}
    \definecolor{cv0v1}{rgb}{0.0,0.0,0.0}
    \definecolor{cv0v2}{rgb}{0.0,0.0,0.0}
    \definecolor{cv0v3}{rgb}{0.0,0.0,0.0}
    \definecolor{cv0v4}{rgb}{0.0,0.0,0.0}
    \definecolor{cv0v5}{rgb}{0.0,0.0,0.0}
    \definecolor{cv0v6}{rgb}{0.0,0.0,0.0}
    \Vertex[NoLabel,x=2.5cm,y=2.5cm]{v0}
    \Vertex[NoLabel,x=2.5cm,y=5.0cm]{v1}
    \Vertex[NoLabel,x=0.0cm,y=3.75cm]{v2}
    \Vertex[NoLabel,x=0.0cm,y=1.25cm]{v3}
    \Vertex[NoLabel,x=2.5cm,y=0.0cm]{v4}
    \Vertex[NoLabel,x=5.0cm,y=1.25cm]{v5}
    \Vertex[NoLabel,x=5.0cm,y=3.75cm]{v6}
    \Edge[lw=0.1cm,style={color=cv0v1,},](v0)(v1)
    \Edge[lw=0.1cm,style={color=cv0v2,},](v0)(v2)
    \Edge[lw=0.1cm,style={color=cv0v3,},](v0)(v3)
    \Edge[lw=0.1cm,style={color=cv0v4,},](v0)(v4)
    \Edge[lw=0.1cm,style={color=cv0v5,},](v0)(v5)
    \Edge[lw=0.1cm,style={color=cv0v6,},](v0)(v6)
  \end{tikzpicture}$
  & $\bbZ,\bbZ^{1092}\oplus\bbZ_3$
  & $\S{1^7}, \S{2^21^3}^{\oplus10}\oplus\S{2^31}^{\oplus16}\oplus\S{321^2}^{\oplus10}\oplus\S{32^2}^{\oplus9}\oplus\S{421}^{\oplus5}\oplus\S{52}$
  \\
  \hline
\end{tabular}
\end{center}

Observe that the multiplicity of $\S{n-2,2}$ in $H_1(G;\bbC)$ is $1$, and the multiplicity of $\S{2^2,1^{n-4}}$ in $H_1(G;\bbC)$ is $\binom{n-2}{2}$, for $n=4,\ldots,7$. 



\appendix
\section{A simple example: the triangle}
\label{app.K3}
\label{sec-A}
In this example, we compute $H_1(K_3;\bbC)$ for the complete graph $K_3$, following the conventions laid out in Section~\ref{sec.restrict}. Consider $K_3$ with the following vertices and edges ordered lexicographically: $e_{12}, e_{13}, e_{23}$.
\[
  \beginpicture
  \setcoordinatesystem units <1cm,1cm>              
  \setplotarea x from 0 to 0, y from -0.5 to 1.5    
  \multiput{$\bullet$} at 0 0 1 0 0.5 0.867 /
  \plot 0 0 1 0 0.5 .867 0 0 /
  \put{\tiny$3$} at 0.5 1.2
  \put{\tiny$2$} at 1.3 0
  \put{\tiny$1$} at -0.3 0
  \put{\tiny$e_{23}$} at 1 .5
  \put{\tiny$e_{12}$} at .5 -.3
  \put{\tiny$e_{13}$} at 0 .5
  \endpicture
\]
The following numberings index cyclic generators for the permutation modules associated to $K_3$.
\[
  \ytableausetup{notabloids}
  U = \ytableaushort{123}, \qquad
  X_1 = \ytableaushort{12,3},\qquad
  X_2 = \ytableaushort{13,2},\qquad
  X_3 = \ytableaushort{23,1}, \qquad
  V = \ytableaushort{1,2,3}.
\]
For $i=1,2$, let $s_i$ denote the simple transposition that exchanges $i$ and $i+1$.
The symmetric group is $\fS_3 = \{e, s_1,s_2,s_1s_2,s_2s_1,s_1s_2s_1\}$, and given a numbering $T$, we have the permutation modules $\calM_{T} = \bbC[\fS_3]a_T$ in $q$-degree zero.
\[
  \begin{array}{llll}
    a_U\!\!\! &= e+s_1+s_2+s_1s_2+s_2s_1+s_1s_2s_1, &\quad \calM_U\!\!\! &\cong \M{3} \cong \S{3},\\
    a_{X_1}\!\!\!\!\! &= e+s_1, &\quad   \calM_{X_1}\!\!\! &\cong \M{2,1} \cong \S{3}\oplus\S{2,1},\\
    a_{X_2}\!\!\!\!\! &= e+s_1s_2s_1, &\quad \calM_{X_2}\!\!\! &\cong \M{2,1} \cong \S{3}\oplus\S{2,1},\\
    a_{X_3}\!\!\!\!\! &= e+s_2, &\quad \calM_{X_3}\!\!\! &\cong \M{2,1} \cong \S{3}\oplus\S{2,1},\\
    a_V\!\!\!\!\! &= e, &\quad \calM_V\!\!\! &\cong \M{1^3} \cong \S{3}\oplus\S{2,1}^{\oplus2}\oplus \S{1^3}.
  \end{array}
\]

The $\fS_3$-modules for the computation of the $q$-degree zero homology of $K_3$ are indicated in Figure~\ref{fig.k3}.
Each edge map $\pm d_{\varepsilon}$ is a (signed) inclusion.
\begin{figure}[ht!]
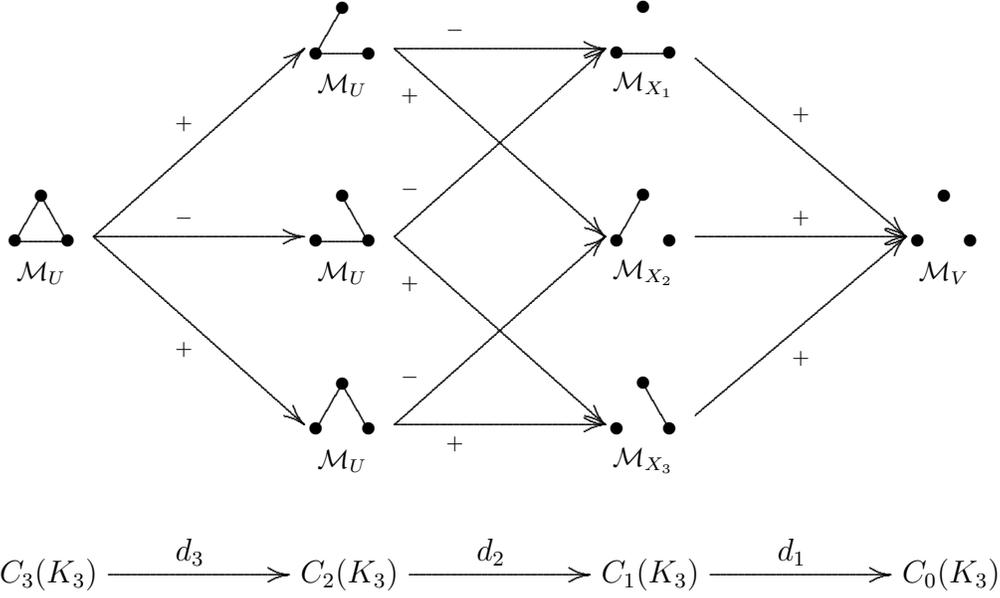

  \[
    \beginpicture
    \setcoordinatesystem units <4cm,2.5cm>         
    \setplotarea x from 0 to 0, y from -2 to 1.3    



    \arrow <8pt> [.2,.67] from 0.1 0 to .8 1   \put{\tiny$+$} at .4 .6
    \arrow <8pt> [.2,.67] from 0.1 0 to .8 0    \put{\tiny$-$} at .4 .1
    \arrow <8pt> [.2,.67] from 0.1 0 to .8 -1   \put{\tiny$+$} at .4 -.6

    \arrow <8pt> [.2,.67] from 1.1 1 to 1.8 1   \put{\tiny$-$}[c] at 1.3 1.1
    \arrow <8pt> [.2,.67] from 1.1 1 to 1.8 0   \put{\tiny$+$}[c] at 1.15 .75
    \arrow <8pt> [.2,.67] from 1.1 0 to 1.8 1   \put{\tiny$-$}[c] at 1.15 .25
    \arrow <8pt> [.2,.67] from 1.1 0 to 1.8 -1  \put{\tiny$+$}[c] at 1.15 -.25
    \arrow <8pt> [.2,.67] from 1.1 -1 to 1.8 0  \put{\tiny$-$}[c] at 1.15 -.75
    \arrow <8pt> [.2,.67] from 1.1 -1 to 1.8 -1 \put{\tiny$+$}[c] at 1.3 -1.1

    \arrow <8pt> [.2,.67] from 2.1 .95 to 2.8 0  \put{\tiny$+$}[c] at 2.45 .65
    \arrow <8pt> [.2,.67] from 2.1 0 to 2.8 0    \put{\tiny$+$}[c] at 2.45 .1
    \arrow <8pt> [.2,.67] from 2.1 -.95 to 2.8 0 \put{\tiny$+$}[c] at 2.45 -.65

    \put{
      $\beginpicture
      \setcoordinatesystem units <.7cm,.7cm>         
      \setplotarea x from 0 to 2, y from 0 to 0    
      \multiput{$\bullet$} at 0 0 1 0 0.5 0.867 /
      \plot 0 0 1 0 0.5 .867 0 0 /
      \put{\footnotesize$\calM_U$} at .5 -.6
      \endpicture$
    }[c] at 0 0

    \put{
      $\beginpicture
      \setcoordinatesystem units <.7cm,.7cm>         
      \setplotarea x from 0 to 2, y from 0 to 0    
      \multiput{$\bullet$} at 0 0 1 0 0.5 0.867 /
      \plot 1 0 0 0 0.5 .867 /
      \put{\footnotesize$\calM_{U}$} at .5 -.6
      \endpicture$
    }[c] at 1 1
    \put{
      $\beginpicture
      \setcoordinatesystem units <.7cm,.7cm>         
      \setplotarea x from 0 to 2, y from 0 to 0    
      \multiput{$\bullet$} at 0 0 1 0 0.5 0.867 /
      \plot 0.5 .867 1 0 0 0 /
      \put{\footnotesize$\calM_{U}$} at .5 -.6
      \endpicture$
    }[c] at 1 0
    \put{
      $\beginpicture
      \setcoordinatesystem units <.7cm,.7cm>         
      \setplotarea x from 0 to 2, y from 0 to 0    
      \multiput{$\bullet$} at 0 0 1 0 0.5 0.867 /
      \plot 0 0 0.5 .867 1 0 /
      \put{\footnotesize$\calM_{U}$} at .5 -.6
      \endpicture$
    }[c] at 1 -1

    \put{
      $\beginpicture
      \setcoordinatesystem units <.7cm,.7cm>         
      \setplotarea x from 0 to 2, y from 0 to 0    
      \multiput{$\bullet$} at 0 0 1 0 0.5 0.867 /
      \plot 0 0 1 0 /
      \put{\footnotesize$\calM_{X_1}$} at .5 -.6
      \endpicture$
    }[c] at 2 1
    \put{
      $\beginpicture
      \setcoordinatesystem units <.7cm,.7cm>         
      \setplotarea x from 0 to 2, y from 0 to 0    
      \multiput{$\bullet$} at 0 0 1 0 0.5 0.867 /
      \plot 0 0 0.5 .867 /
      \put{\footnotesize$\calM_{X_2}$} at .5 -.6
      \endpicture$
    }[c] at 2 0
    \put{
      $\beginpicture
      \setcoordinatesystem units <.7cm,.7cm>         
      \setplotarea x from 0 to 2, y from 0 to 0    
      \multiput{$\bullet$} at 0 0 1 0 0.5 0.867 /
      \plot 1 0 0.5 .867 /
      \put{\footnotesize$\calM_{X_3}$} at .5 -.6
      \endpicture$
    }[c] at 2 -1

    \put{
      $\beginpicture
      \setcoordinatesystem units <.7cm,.7cm>         
      \setplotarea x from 0 to 2, y from 0 to 0    
      \multiput{$\bullet$} at 0 0 1 0 0.5 0.867 /
      \put{\footnotesize$\calM_V$} at .5 -.6
      \endpicture$
    } at 3 0

    \put{$C_{3}(K_3)$}[c] at -.05 -1.8
    \put{$C_2(K_3)$}[c] at .95 -1.8
    \put{$C_{1}(K_3)$}[c] at 1.95 -1.8
    \put{$C_{0}(K_3)$}[c] at 2.95 -1.8
    \arrow <8pt> [.2,.67] from 0.15 -1.8 to 0.75 -1.8
    \put{$d_{3}$} at .42 -1.68
    \arrow <8pt> [.2,.67] from 1.15 -1.8 to 1.75 -1.8
    \put{$d_{2}$} at 1.42 -1.68
    \arrow <8pt> [.2,.67] from 2.15 -1.8 to 2.75 -1.8
    \put{$d_{1}$} at 2.42 -1.68
    \endpicture
  \]
  \caption{The Boolean lattice of subgraphs of $K_3$, showing the signed edges.}
  \label{fig.k3}
\end{figure}

\begin{figure}[th]
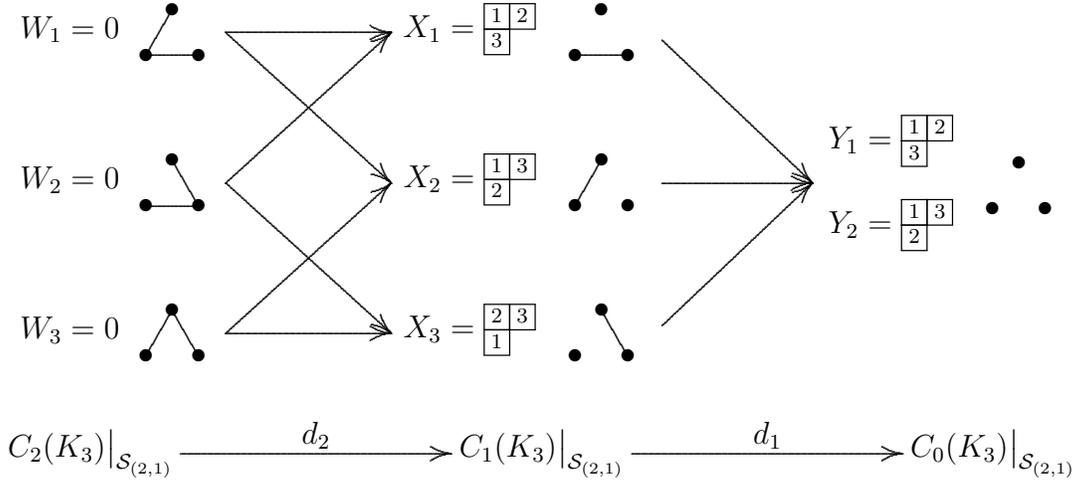

  \[
    \beginpicture
    \setcoordinatesystem units <4cm,2cm>         
    \setplotarea x from 0 to 0, y from -2 to 1.3    

    \arrow <8pt> [.2,.67] from 0.45 1 to 1 1
    \arrow <8pt> [.2,.67] from 0.45 1 to 1 0
    \arrow <8pt> [.2,.67] from 0.45 0 to 1 1
    \arrow <8pt> [.2,.67] from 0.45 0 to 1 -1
    \arrow <8pt> [.2,.67] from 0.45 -1 to 1 0
    \arrow <8pt> [.2,.67] from 0.45 -1 to 1 -1

    \arrow <8pt> [.2,.67] from 1.9 .95 to 2.4 0
    \arrow <8pt> [.2,.67] from 1.9 0 to 2.4 0
    \arrow <8pt> [.2,.67] from 1.9 -.95 to 2.4 0

    \put{
      $\beginpicture
      \setcoordinatesystem units <.7cm,.7cm>         
      \setplotarea x from 0 to 2, y from 0 to 0    
      \multiput{$\bullet$} at 0 0 1 0 0.5 0.867 /
      \plot 1 0 0 0 0.5 .867 /
      \put{$W_{1}=0$}[r] at -.5 .5
      \endpicture$
    }[c] at 0.15 1
    \put{
      $\beginpicture
      \setcoordinatesystem units <.7cm,.7cm>         
      \setplotarea x from 0 to 2, y from 0 to 0    
      \multiput{$\bullet$} at 0 0 1 0 0.5 0.867 /
      \plot 0.5 .867 1 0 0 0 /
      \put{$W_{2}=0$}[r] at -.5 .5
      \endpicture$
    }[c] at 0.15 0
    \put{
      $\beginpicture
      \setcoordinatesystem units <.7cm,.7cm>         
      \setplotarea x from 0 to 2, y from 0 to 0    
      \multiput{$\bullet$} at 0 0 1 0 0.5 0.867 /
      \plot 0 0 0.5 .867 1 0 /
      \put{$W_{3}=0$}[r] at -.5 .5
      \endpicture$
    }[c] at 0.15 -1

    \put{
      $\beginpicture
      \setcoordinatesystem units <.7cm,.7cm>         
      \setplotarea x from 0 to 2, y from 0 to 0    
      \multiput{$\bullet$} at 0 0 1 0 0.5 0.867 /
      \plot 0 0 1 0 /
      \put{$X_{1}=\ytableaushort{12,3}$}[r] at -.75 .5
      \endpicture$
    }[c] at 1.5 1
    \put{
      $\beginpicture
      \setcoordinatesystem units <.7cm,.7cm>         
      \setplotarea x from 0 to 2, y from 0 to 0    
      \multiput{$\bullet$} at 0 0 1 0 0.5 0.867 /
      \plot 0 0 0.5 .867 /
      \put{$X_{2}=\ytableaushort{13,2}$}[r] at -.75 .5
      \endpicture$
    }[c] at 1.5 0
    \put{
      $\beginpicture
      \setcoordinatesystem units <.7cm,.7cm>         
      \setplotarea x from 0 to 2, y from 0 to 0    
      \multiput{$\bullet$} at 0 0 1 0 0.5 0.867 /
      \plot 1 0 0.5 .867 /
      \put{$X_{3}=\ytableaushort{23,1}$}[r] at -.75 .5
      \endpicture$
    }[c] at 1.5 -1

    \put{
      $\beginpicture
      \setcoordinatesystem units <.7cm,.7cm>         
      \setplotarea x from 0 to 2, y from 0 to 0    
      \multiput{$\bullet$} at 0 0 1 0 0.5 0.867 /
      \put{$Y_1=\ytableaushort{12,3}$}[r] at -.75 1.3
      \put{$Y_2=\ytableaushort{13,2}$}[r] at -.75 -.3
      \endpicture$
    } at 2.9 0

    \put{$C_2(K_3)\big|_{\S{2,1}} $}[c] at 0 -1.8
    \put{$C_{1}(K_3)\big|_{\S{2,1}} $}[c] at 1.5 -1.8
    \put{$C_{0}(K_3)\big|_{\S{2,1}} $}[c] at 3 -1.8
    \arrow <8pt> [.2,.67] from 0.3 -1.8 to 1.2 -1.8
    \put{$d_{2}$} at .75 -1.68
    \arrow <8pt> [.2,.67] from 1.8 -1.8 to 2.7 -1.8
    \put{$d_{1}$} at 2.25 -1.68
    \endpicture
  \]
  \caption{Computation of $H_1(K_3;\bbC)$ restricted to the Specht module $\S{2,1}$.}
  \label{fig.k3prime}
\end{figure}

Next we compute the homology restricted to the Specht modules isomorphic to $\S{2,1}$.
First, it is clear that $C_3(K_3)\big|_{\S{2,1}} = C_2(K_3)\big|_{\S{2,1}}=0$.
For $C_1(K_3)\big|_{\S{2,1}} \cong \S{2,1}^{\oplus3}$, we choose the generators
\begin{align*}
  v_{X_1}^{Y_1} &= c_{X_1} \in \calM_{X_1},\\
  v_{X_2}^{Y_1} &= \sigma_{X_2,Y_1}c_{X_2} = s_2 c_{X_2}= c_{X_1}s_2 \in \calM_{X_2},\\
  v_{X_3}^{Y_1} &= \sigma_{X_3,Y_1}c_{X_3} = s_2s_1c_{X_3} = c_{X_1}s_2s_1 \in \calM_{X_3},
\end{align*}
so that $C_1(K_3)\big|_{\S{2,1}} = \bigoplus_{i=1}^3 \bbC[\fS_3]\cdot v_{X_i}^{Y_1}$.
Finally for $C_0(K_3)\big|_{\S{2,1}}\cong \S{2,1}^{\oplus2}$, we choose the generators $v_{Y_1}^{Y_1}, v_{Y_2}^{Y_1}$ so that $C_0(K_3)\big|_{\S{2,1}} = \bigoplus_{i=1}^2 \bbC[\fS_3] \cdot v_{Y_i}^{Y_1}$.
See Example~\ref{eg.forappA} for a related computation.

Applying the straightening laws to $v_{X_3}^{Y_1}$, we get
\[
  \ytableausetup{smalltableaux}
  v_{\,\scalebox{.75}{\ytableaushort{23,1}}}^{Y_1}
  =
  -v_{\,\scalebox{.75}{\ytableaushort{13,2}}}^{Y_1} - v_{\,\scalebox{.75}{\ytableaushort{21,3}}}^{Y_1}
  = -v_{\,\scalebox{.75}{\ytableaushort{13,2}}}^{Y_1} - v_{\,\scalebox{.75}{\ytableaushort{12,3}}}^{Y_1},
\]
so the only non-zero differential defined on our generators is
\[
  \setlength{\arraycolsep}{3pt}
  d_{1}\big|_{\S{2,1}} =
  \kbordermatrix{
        & X_1 & X_2 & X_3 \\
    Y_1 &   1 &   0 &  \n \\
    Y_2 &   0 &   1 &  \n \\
  }.
\]

It follows that $\ker d_1$ is generated by $v_{X_1}^{Y_1}+v_{X_2}^{Y_1}+v_{X_3}^{Y_1}$ and $H_1(K_3;\bbC) \cong \S{2,1}$.
It can also be shown that $H_0(K_3;\bbC) \cong \S{1^3}$.

\section{Proofs}
\label{app.bigmatrices}
\label{sec-B}
This section contains the proofs of Lemmas~\ref{lemma.G3},~\ref{lemma.G4},~\ref{lemma.G5},~\ref{lemma.G6}, and Theorem~\ref{thm.K33}.

\noindent{\bf Lemma~\ref{lemma.G3}.} The multiplicity of the $\fS_6$-module $\S{2^3}$ in $H_{1}(G_3;\bbC)$ is $1$.
\begin{proof}
  The chain modules are $C_0(G_3)\big|_{\S{2^3}} \cong \S{2^3}^{\oplus 5}$, $C_1(G_3)\big|_{\S{2^3}}\cong \S{2^3}^{\oplus 23}$, and $C_2(G_3)\big|_{\S{2^3}}\cong \S{2^3}^{\oplus 22}$.
  The differentials $d_2(G_3)$ and $d_1(G_3)$ are described by the following matrices (here, $a=10$ and $b=11$):
  \[
    \setlength{\arraycolsep}{.75pt}
    \renewcommand{\arraystretch}{1.0}
    \tiny
    d_2(G_3) =
    \kbordermatrix{
               & W_{16} & W_{17} & W_{18} & W_{19} & W_{1a} & W_{1b} & W_{25} & W_{29} & W_{2a} & W_{2b} & W_{34} & W_{37} & W_{38} & W_{3b} & W_{49} & W_{4a} & W_{4b} & W_{57} & W_{58} & W_{5b} & W_{6b} & W_{7a} & W_{89} \\
      X_1^1    &     \n &      0 &      1 &      1 &      0 &     \n &      0 &      0 &      0 &      0 &      0 &      0 &      0 &      0 &      0 &      0 &      0 &      0 &      0 &      0 &      0 &      0 &      0 \\
      X_1^2    &      0 &     \n &      1 &      1 &     \n &      0 &      0 &      0 &      0 &      0 &      0 &      0 &      0 &      0 &      0 &      0 &      0 &      0 &      0 &      0 &      0 &      0 &      0 \\
      X_2^1    &      0 &      0 &      0 &      0 &      0 &      0 &     \n &      1 &      0 &     \n &      0 &      0 &      0 &      0 &      0 &      0 &      0 &      0 &      0 &      0 &      0 &      0 &      0 \\
      X_2^2    &      0 &      0 &      0 &      0 &      0 &      0 &      0 &      1 &     \n &      0 &      0 &      0 &      0 &      0 &      0 &      0 &      0 &      0 &      0 &      0 &      0 &      0 &      0 \\
      X_3^1    &      0 &      0 &      0 &      0 &      0 &      0 &      0 &      0 &      0 &      0 &     \n &      1 &      0 &     \n &      0 &      0 &      0 &      0 &      0 &      0 &      0 &      0 &      0 \\
      X_3^2    &      0 &      0 &      0 &      0 &      0 &      0 &      0 &      0 &      0 &      0 &      0 &      1 &     \n &      0 &      0 &      0 &      0 &      0 &      0 &      0 &      0 &      0 &      0 \\
      X_4^1    &      0 &      0 &      0 &      0 &      0 &      0 &      0 &      0 &      0 &      0 &      1 &      0 &      0 &      0 &      1 &      0 &     \n &      0 &      0 &      0 &      0 &      0 &      0 \\
      X_4^2    &      0 &      0 &      0 &      0 &      0 &      0 &      0 &      0 &      0 &      0 &      0 &      0 &      0 &      0 &      1 &     \n &      0 &      0 &      0 &      0 &      0 &      0 &      0 \\
      X_5^1    &      0 &      0 &      0 &      0 &      0 &      0 &      1 &      0 &      0 &      0 &      0 &      0 &      0 &      0 &      0 &      0 &      0 &      1 &      0 &     \n &      0 &      0 &      0 \\
      X_5^2    &      0 &      0 &      0 &      0 &      0 &      0 &      0 &      0 &      0 &      0 &      0 &      0 &      0 &      0 &      0 &      0 &      0 &      1 &     \n &      0 &      0 &      0 &      0 \\
      X_6^1    &      1 &      0 &      0 &      0 &      0 &      0 &      0 &      0 &      0 &      0 &      0 &      0 &      0 &      0 &      0 &      0 &      0 &      0 &      0 &      0 &     \n &      0 &      0 \\
      X_6^2    &      0 &      0 &      0 &      0 &      0 &      0 &      0 &      0 &      0 &      0 &      0 &      0 &      0 &      0 &      0 &      0 &      0 &      0 &      0 &      0 &      0 &      0 &      0 \\
      X_7^1    &      0 &      1 &      0 &      0 &      0 &      0 &      0 &      0 &      0 &      0 &      0 &      0 &      0 &      0 &      0 &      0 &      0 &     \n &      0 &      0 &      0 &     \n &      0 \\
      X_7^2    &      0 &      0 &      0 &      0 &      0 &      0 &      0 &      0 &      0 &      0 &      0 &      1 &      0 &      0 &      0 &      0 &      0 &     \n &      0 &      0 &      0 &      0 &      0 \\
      X_8^1    &      0 &      0 &      1 &      0 &      0 &      0 &      0 &      0 &      0 &      0 &      0 &      0 &      0 &      0 &      0 &      0 &      0 &      0 &     \n &      0 &      0 &      0 &     \n \\
      X_8^2    &      0 &      0 &      0 &      0 &      0 &      0 &      0 &      0 &      0 &      0 &      0 &      0 &      1 &      0 &      0 &      0 &      0 &      0 &     \n &      0 &      0 &      0 &      0 \\
      X_9^1    &      0 &      0 &      0 &      1 &      0 &      0 &      0 &      0 &      0 &      0 &      0 &      0 &      0 &      0 &     \n &      0 &      0 &      0 &      0 &      0 &      0 &      0 &      1 \\
      X_9^2    &      0 &      0 &      0 &      0 &      0 &      0 &      0 &      1 &      0 &      0 &      0 &      0 &      0 &      0 &     \n &      0 &      0 &      0 &      0 &      0 &      0 &      0 &      0 \\
      X_{10}^1 &      0 &      0 &      0 &      0 &      1 &      0 &      0 &      0 &      0 &      0 &      0 &      0 &      0 &      0 &      0 &     \n &      0 &      0 &      0 &      0 &      0 &      1 &      0 \\
      X_{10}^2 &      0 &      0 &      0 &      0 &      0 &      0 &      0 &      0 &      1 &      0 &      0 &      0 &      0 &      0 &      0 &     \n &      0 &      0 &      0 &      0 &      0 &      0 &      0 \\
      X_{11}^1 &      0 &      0 &      0 &      0 &      0 &      1 &      0 &      0 &      0 &      0 &      0 &      0 &      0 &     \n &      0 &      0 &     \n &      0 &      0 &      0 &      1 &      0 &      0 \\
      X_{11}^2 &      0 &      0 &      0 &      0 &      0 &      0 &      0 &      0 &      0 &      1 &      0 &      0 &      0 &     \n &      0 &      0 &     \n &      0 &      0 &      1 &      0 &      0 &      0 \\
    }
  \]
  \[
    \setlength{\arraycolsep}{2pt}
    \renewcommand{\arraystretch}{1.0}
    \tiny
    d_1(G_3) =
    \kbordermatrix{
          & X_1^1 & X_1^2 & X_2^1 & X_2^2 & X_3^1 & X_3^2 & X_4^1 & X_4^2 & X_5^1 & X_5^2 & X_6^1 & X_6^2 & X_7^1 & X_7^2 & X_8^1 & X_8^2 & X_9^1 & X_9^2 & X_{10}^1 & X_{10}^2 & X_{11}^1 & X_{11}^2 \\
      Y_1 &     1 &     0 &     0 &     0 &    \n &     0 &    \n &     0 &     0 &     1 &     1 &    \n &     0 &     1 &    \n &     0 &    \n &     0 &        0 &        0 &        1 &        0 \\
      Y_2 &     0 &     0 &     1 &     0 &    \n &     0 &    \n &     0 &     1 &     0 &     0 &     0 &     0 &     1 &     0 &     0 &     0 &    \n &        0 &        0 &        0 &        1 \\
      Y_3 &     0 &     1 &     0 &     0 &     0 &     0 &     0 &    \n &     0 &     1 &     0 &     0 &     1 &     0 &    \n &     0 &    \n &     0 &        1 &        0 &        0 &        0 \\
      Y_4 &     0 &     0 &     0 &     1 &     0 &     0 &     0 &    \n &     0 &     0 &     0 &     1 &     0 &     0 &     0 &     0 &     0 &    \n &        0 &        1 &        0 &        0 \\
      Y_5 &     0 &     0 &     0 &     0 &     0 &     1 &     0 &     0 &     0 &    \n &     0 &     1 &     0 &    \n &     0 &     1 &     0 &     0 &        0 &        0 &        0 &        0 \\
    }
  \]

  It is easily verified that $d_1\circ d_2 =0$, $\dim \ker d_1(G_3) = 17$, and $\mathrm{rank}\, d_2(G_3) = 16$, so the multiplicity of the Specht module $\S{2^3}$ in $H_{1}(G_3;\bbC)$ is one.
  We note that a generator of the homology $H_{1}(G_3;\bbC)\big|_{\S{2^3}}$ is $X_6^2 -X_8^2-X_{10}^2 +X_{11}^1$.
\end{proof}

\noindent{\bf Lemma~\ref{lemma.G4}.} The multiplicity of the $\fS_6$-module $\S{2^3}$ in $H_{1}(G_4;\bbC)$ is $0$.
\begin{proof}
  The chain modules are $C_0(G_4)\big|_{\S{2^3}}\cong \S{2^3}^{\oplus 5}$, $C_1(G_4)\big|_{\S{2^3}}\cong \S{2^3}^{\oplus 23}$, and $C_2(G_4)\big|_{\S{2^3}}\cong \S{2^3}^{\oplus 22}$.
  The differentials $d_2(G_4)$ and $d_1(G_4)$ are described by the following matrices (here, $a=10$ and $b=11$):
  \[
    \setlength{\arraycolsep}{.75pt}
    \renewcommand{\arraystretch}{1.0}
    \tiny
    d_2(G_4) =
    \kbordermatrix{
               & W_{16} & W_{17} & W_{18} & W_{19} & W_{1a} & W_{1b} & W_{23} & W_{25} & W_{27} & W_{28} & W_{2b} & W_{39} & W_{3a} & W_{3b} & W_{47} & W_{48} & W_{4b} & W_{56} & W_{58} & W_{5a} & W_{6b} & W_{7a} & W_{89} \\
      X_1^1    &     \n &      0 &      1 &      1 &      0 &     \n &      0 &      0 &      0 &      0 &      0 &      0 &      0 &      0 &      0 &      0 &      0 &      0 &      0 &      0 &      0 &      0 &      0 \\
      X_1^2    &      0 &     \n &      1 &      1 &     \n &      0 &      0 &      0 &      0 &      0 &      0 &      0 &      0 &      0 &      0 &      0 &      0 &      0 &      0 &      0 &      0 &      0 &      0 \\
      X_2^1    &      0 &      0 &      0 &      0 &      0 &      0 &     \n &      0 &      1 &      0 &     \n &      0 &      0 &      0 &      0 &      0 &      0 &      0 &      0 &      0 &      0 &      0 &      0 \\
      X_2^2    &      0 &      0 &      0 &      0 &      0 &      0 &      0 &     \n &      1 &     \n &      0 &      0 &      0 &      0 &      0 &      0 &      0 &      0 &      0 &      0 &      0 &      0 &      0 \\
      X_3^1    &      0 &      0 &      0 &      0 &      0 &      0 &      1 &      0 &      0 &      0 &      0 &      1 &      0 &     \n &      0 &      0 &      0 &      0 &      0 &      0 &      0 &      0 &      0 \\
      X_3^2    &      0 &      0 &      0 &      0 &      0 &      0 &      0 &      0 &      0 &      0 &      0 &      1 &     \n &      0 &      0 &      0 &      0 &      0 &      0 &      0 &      0 &      0 &      0 \\
      X_4^1    &      0 &      0 &      0 &      0 &      0 &      0 &      0 &      0 &      0 &      0 &      0 &      0 &      0 &      0 &      1 &      0 &     \n &      0 &      0 &      0 &      0 &      0 &      0 \\
      X_4^2    &      0 &      0 &      0 &      0 &      0 &      0 &      0 &      0 &      0 &      0 &      0 &      0 &      0 &      0 &      1 &     \n &      0 &      0 &      0 &      0 &      0 &      0 &      0 \\
      X_5^1    &      0 &      0 &      0 &      0 &      0 &      0 &      0 &      1 &      0 &      0 &      0 &      0 &      0 &      0 &      0 &      0 &      0 &      1 &     \n &      0 &      0 &      0 &      0 \\
      X_5^2    &      0 &      0 &      0 &      0 &      0 &      0 &      0 &      0 &      0 &      0 &      0 &      0 &      0 &      0 &      0 &      0 &      0 &      1 &      0 &     \n &      0 &      0 &      0 \\
      X_6^1    &      1 &      0 &      0 &      0 &      0 &      0 &      0 &      0 &      0 &      0 &      0 &      0 &      0 &      0 &      0 &      0 &      0 &     \n &      0 &      0 &     \n &      0 &      0 \\
      X_6^2    &      0 &      0 &      0 &      0 &      0 &      0 &      0 &      0 &      0 &      0 &      0 &      0 &      0 &      0 &      0 &      0 &      0 &     \n &      0 &      0 &      0 &      0 &      0 \\
      X_7^1    &      0 &      1 &      0 &      0 &      0 &      0 &      0 &      0 &      0 &      0 &      0 &      0 &      0 &      0 &     \n &      0 &      0 &      0 &      0 &      0 &      0 &     \n &      0 \\
      X_7^2    &      0 &      0 &      0 &      0 &      0 &      0 &      0 &      0 &      1 &      0 &      0 &      0 &      0 &      0 &     \n &      0 &      0 &      0 &      0 &      0 &      0 &      0 &      0 \\
      X_8^1    &      0 &      0 &      1 &      0 &      0 &      0 &      0 &      0 &      0 &      0 &      0 &      0 &      0 &      0 &      0 &     \n &      0 &      0 &      0 &      0 &      0 &      0 &     \n \\
      X_8^2    &      0 &      0 &      0 &      0 &      0 &      0 &      0 &      0 &      0 &      1 &      0 &      0 &      0 &      0 &      0 &     \n &      0 &      0 &      1 &      0 &      0 &      0 &      0 \\
      X_9^1    &      0 &      0 &      0 &      1 &      0 &      0 &      0 &      0 &      0 &      0 &      0 &     \n &      0 &      0 &      0 &      0 &      0 &      0 &      0 &      0 &      0 &      0 &      1 \\
      X_9^2    &      0 &      0 &      0 &      0 &      0 &      0 &      0 &      0 &      0 &      0 &      0 &     \n &      0 &      0 &      0 &      0 &      0 &      0 &      0 &      0 &      0 &      0 &      0 \\
      X_{10}^1 &      0 &      0 &      0 &      0 &      1 &      0 &      0 &      0 &      0 &      0 &      0 &      0 &     \n &      0 &      0 &      0 &      0 &      0 &      0 &      0 &      0 &      1 &      0 \\
      X_{10}^2 &      0 &      0 &      0 &      0 &      0 &      0 &      0 &      0 &      0 &      0 &      0 &      0 &     \n &      0 &      0 &      0 &      0 &      0 &      0 &      1 &      0 &      0 &      0 \\
      X_{11}^1 &      0 &      0 &      0 &      0 &      0 &      1 &      0 &      0 &      0 &      0 &     \n &      0 &      0 &     \n &      0 &      0 &      0 &      0 &      0 &      0 &      1 &      0 &      0 \\
      X_{11}^2 &      0 &      0 &      0 &      0 &      0 &      0 &      0 &      0 &      0 &      0 &     \n &      0 &      0 &     \n &      0 &      0 &      1 &      0 &      0 &      0 &      0 &      0 &      0 \\
    }
  \]
  \[
    \setlength{\arraycolsep}{2pt}
    \renewcommand{\arraystretch}{1.0}
    \tiny
    d_1(G_4) =
    \kbordermatrix{
          & X_1^1 & X_1^2 & X_2^1 & X_2^2 & X_3^1 & X_3^2 & X_4^1 & X_4^2 & X_5^1 & X_5^2 & X_6^1 & X_6^2 & X_7^1 & X_7^2 & X_8^1 & X_8^2 & X_9^1 & X_9^2 & X_{10}^1 & X_{10}^2 & X_{11}^1 & X_{11}^2 \\
      Y_1 &     1 &     0 &    \n &     0 &    \n &     0 &     0 &     1 &     0 &     0 &     1 &    \n &     0 &     1 &    \n &     0 &    \n &     0 &        0 &        0 &        1 &        0 \\
      Y_2 &     0 &     0 &    \n &     0 &    \n &     0 &     1 &     0 &     0 &     0 &     0 &     0 &     0 &     1 &     0 &     0 &     0 &    \n &        0 &        0 &        0 &        1 \\
      Y_3 &     0 &     1 &     0 &     0 &     0 &    \n &     0 &     1 &     0 &     0 &     0 &     0 &     1 &     0 &    \n &     0 &    \n &     0 &        1 &        0 &        0 &        0 \\
      Y_4 &     0 &     0 &     0 &     0 &     0 &    \n &     0 &     0 &     0 &     1 &     0 &     1 &     0 &     0 &     0 &     0 &     0 &    \n &        0 &        1 &        0 &        0 \\
      Y_5 &     0 &     0 &     0 &     1 &     0 &     0 &     0 &    \n &     1 &     0 &     0 &     1 &     0 &    \n &     0 &     1 &     0 &     0 &        0 &        0 &        0 &        0 \\
    }
  \]

  It is easily verified that $d_1\circ d_2 =0$, $\dim \ker d_1(G_4) = 17$, and $\mathrm{rank}\, d_2(G_4) = 17$, so the multiplicity of the Specht module $\S{2^3}$ in $H_{1}(G_4;\bbC)$ is zero.
\end{proof}

\noindent{\bf Lemma~\ref{lemma.G5}.} The multiplicity of the $\fS_6$-module $\S{2^2,1^2}$ in $H_{1}(G_5,\bbC)$ is $1$.
\begin{proof}
  We have the chain modules $C_0(G_5)\big|_{\S{2^2,1^2}}\cong \S{2^2,1^2}^{\oplus 9}$, $C_1(G_5)\big|_{\S{2^2,1^2}}\cong \S{2^2,1^2}^{\oplus 33}$, and $C_2(G_5)\big|_{\S{2^2,1^2}}\cong \S{2^2,1^2}^{\oplus 24}$.
  The differentials $d_2(G_5)$ and $d_1(G_5)$ are described by the following matrices (here, $a=10$ and $b=11$):
  \[
    \setlength{\arraycolsep}{.75pt}
    \renewcommand{\arraystretch}{1.0}
    \tiny
    d_2(G_5) =\!
    \kbordermatrix{
               & W_{17} & W_{18} & W_{19} & W_{1a} & W_{1b} & W_{24} & W_{25} & W_{26} & W_{27} & W_{28} & W_{2a} & W_{34} & W_{35} & W_{37} & W_{39} & W_{49} & W_{4a} & W_{4b} & W_{58} & W_{5b} & W_{67} & W_{69} & W_{7b} & W_{89} \\
      X_1^1    &     \n &      0 &      1 &     \n &      0 &      0 &      0 &      0 &      0 &      0 &      0 &      0 &      0 &      0 &      0 &      0 &      0 &      0 &      0 &      0 &      0 &      0 &      0 &      0 \\
      X_1^2    &      0 &      0 &      1 &      0 &     \n &      0 &      0 &      0 &      0 &      0 &      0 &      0 &      0 &      0 &      0 &      0 &      0 &      0 &      0 &      0 &      0 &      0 &      0 &      0 \\
      X_1^3    &      0 &     \n &      0 &      1 &     \n &      0 &      0 &      0 &      0 &      0 &      0 &      0 &      0 &      0 &      0 &      0 &      0 &      0 &      0 &      0 &      0 &      0 &      0 &      0 \\
      X_2^1    &      0 &      0 &      0 &      0 &      0 &     \n &      0 &      0 &      1 &     \n &      0 &      0 &      0 &      0 &      0 &      0 &      0 &      0 &      0 &      0 &      0 &      0 &      0 &      0 \\
      X_2^2    &      0 &      0 &      0 &      0 &      0 &      0 &     \n &      0 &      1 &      0 &     \n &      0 &      0 &      0 &      0 &      0 &      0 &      0 &      0 &      0 &      0 &      0 &      0 &      0 \\
      X_2^3    &      0 &      0 &      0 &      0 &      0 &      0 &      0 &     \n &      0 &      1 &     \n &      0 &      0 &      0 &      0 &      0 &      0 &      0 &      0 &      0 &      0 &      0 &      0 &      0 \\
      X_3^1    &      0 &      0 &      0 &      0 &      0 &      0 &      0 &      0 &      0 &      0 &      0 &     \n &      0 &      1 &      0 &      0 &      0 &      0 &      0 &      0 &      0 &      0 &      0 &      0 \\
      X_3^2    &      0 &      0 &      0 &      0 &      0 &      0 &      0 &      0 &      0 &      0 &      0 &      0 &     \n &      1 &     \n &      0 &      0 &      0 &      0 &      0 &      0 &      0 &      0 &      0 \\
      X_3^3    &      0 &      0 &      0 &      0 &      0 &      0 &      0 &      0 &      0 &      0 &      0 &      0 &      0 &      0 &     \n &      0 &      0 &      0 &      0 &      0 &      0 &      0 &      0 &      0 \\
      X_4^1    &      0 &      0 &      0 &      0 &      0 &      0 &      0 &      0 &      0 &      0 &      0 &      0 &      0 &      0 &      0 &      1 &     \n &      0 &      0 &      0 &      0 &      0 &      0 &      0 \\
      X_4^2    &      0 &      0 &      0 &      0 &      0 &      1 &      0 &      0 &      0 &      0 &      0 &      0 &      0 &      0 &      0 &      1 &      0 &     \n &      0 &      0 &      0 &      0 &      0 &      0 \\
      X_4^3    &      0 &      0 &      0 &      0 &      0 &      0 &      0 &      0 &      0 &      0 &      0 &      1 &      0 &      0 &      0 &      0 &      1 &     \n &      0 &      0 &      0 &      0 &      0 &      0 \\
      X_5^1    &      0 &      0 &      0 &      0 &      0 &      0 &      0 &      0 &      0 &      0 &      0 &      0 &      0 &      0 &      0 &      0 &      0 &      0 &     \n &      0 &      0 &      0 &      0 &      0 \\
      X_5^2    &      0 &      0 &      0 &      0 &      0 &      0 &      1 &      0 &      0 &      0 &      0 &      0 &      0 &      0 &      0 &      0 &      0 &      0 &      0 &     \n &      0 &      0 &      0 &      0 \\
      X_5^3    &      0 &      0 &      0 &      0 &      0 &      0 &      0 &      0 &      0 &      0 &      0 &      0 &      1 &      0 &      0 &      0 &      0 &      0 &      1 &     \n &      0 &      0 &      0 &      0 \\
      X_6^1    &      0 &      0 &      0 &      0 &      0 &      0 &      0 &      0 &      0 &      0 &      0 &      0 &      0 &      0 &      0 &      0 &      0 &      0 &      0 &      0 &      1 &      0 &      0 &      0 \\
      X_6^2    &      0 &      0 &      0 &      0 &      0 &      0 &      0 &      0 &      0 &      0 &      0 &      0 &      0 &      0 &      0 &      0 &      0 &      0 &      0 &      0 &      1 &     \n &      0 &      0 \\
      X_6^3    &      0 &      0 &      0 &      0 &      0 &      0 &      0 &      1 &      0 &      0 &      0 &      0 &      0 &      0 &      0 &      0 &      0 &      0 &      0 &      0 &      0 &     \n &      0 &      0 \\
      X_7^1    &      1 &      0 &      0 &      0 &      0 &      0 &      0 &      0 &      0 &      0 &      0 &      0 &      0 &      0 &      0 &      0 &      0 &      0 &      0 &      0 &      1 &      0 &      0 &      0 \\
      X_7^2    &      0 &      0 &      0 &      0 &      0 &      0 &      0 &      0 &      1 &      0 &      0 &      0 &      0 &      0 &      0 &      0 &      0 &      0 &      0 &      0 &      0 &      0 &     \n &      0 \\
      X_7^3    &      0 &      0 &      0 &      0 &      0 &      0 &      0 &      0 &      0 &      0 &      0 &      0 &      0 &      1 &      0 &      0 &      0 &      0 &      0 &      0 &     \n &      0 &     \n &      0 \\
      X_8^1    &      0 &      1 &      0 &      0 &      0 &      0 &      0 &      0 &      0 &      0 &      0 &      0 &      0 &      0 &      0 &      0 &      0 &      0 &     \n &      0 &      0 &      0 &      0 &      0 \\
      X_8^2    &      0 &      0 &      0 &      0 &      0 &      0 &      0 &      0 &      0 &      0 &      0 &      0 &      0 &      0 &      0 &      0 &      0 &      0 &     \n &      0 &      0 &      0 &      0 &     \n \\
      X_8^3    &      0 &      0 &      0 &      0 &      0 &      0 &      0 &      0 &      0 &      1 &      0 &      0 &      0 &      0 &      0 &      0 &      0 &      0 &      0 &      0 &      0 &      0 &      0 &     \n \\
      X_9^1    &      0 &      0 &      1 &      0 &      0 &      0 &      0 &      0 &      0 &      0 &      0 &      0 &      0 &      0 &      0 &     \n &      0 &      0 &      0 &      0 &      0 &      1 &      0 &      0 \\
      X_9^2    &      0 &      0 &      0 &      0 &      0 &      0 &      0 &      0 &      0 &      0 &      0 &      0 &      0 &      0 &      0 &     \n &      0 &      0 &      0 &      0 &      0 &      0 &      0 &      1 \\
      X_9^3    &      0 &      0 &      0 &      0 &      0 &      0 &      0 &      0 &      0 &      0 &      0 &      0 &      0 &      0 &      1 &      0 &      0 &      0 &      0 &      0 &      0 &     \n &      0 &      1 \\
      X_{10}^1 &      0 &      0 &      0 &      1 &      0 &      0 &      0 &      0 &      0 &      0 &      0 &      0 &      0 &      0 &      0 &      0 &     \n &      0 &      0 &      0 &      0 &      0 &      0 &      0 \\
      X_{10}^2 &      0 &      0 &      0 &      0 &      0 &      0 &      0 &      0 &      0 &      0 &      0 &      0 &      0 &      0 &      0 &      0 &     \n &      0 &      0 &      0 &      0 &      0 &      0 &      0 \\
      X_{10}^3 &      0 &      0 &      0 &      0 &      0 &      0 &      0 &      0 &      0 &      0 &      1 &      0 &      0 &      0 &      0 &      0 &      0 &      0 &      0 &      0 &      0 &      0 &      0 &      0 \\
      X_{11}^1 &      0 &      0 &      0 &      0 &      1 &      0 &      0 &      0 &      0 &      0 &      0 &      0 &      0 &      0 &      0 &      0 &      0 &     \n &      0 &      1 &      0 &      0 &      0 &      0 \\
      X_{11}^2 &      0 &      0 &      0 &      0 &      0 &      0 &      0 &      0 &      0 &      0 &      0 &      0 &      0 &      0 &      0 &      0 &      0 &     \n &      0 &      0 &      0 &      0 &      1 &      0 \\
      X_{11}^3 &      0 &      0 &      0 &      0 &      0 &      0 &      0 &      0 &      0 &      0 &      0 &      0 &      0 &      0 &      0 &      0 &      0 &      0 &      0 &     \n &      0 &      0 &      1 &      0
    }
  \]
  \[
    \setlength{\arraycolsep}{1pt}
    \renewcommand{\arraystretch}{1.0}
    \tiny
    d_1\!\!=\!\!
    \kbordermatrix{
          & X_1^1 & X_1^2 & X_1^3 & X_2^1 & X_2^2 & X_2^3 & X_3^1 & X_3^2 & X_3^3 & X_4^1 & X_4^2 & X_4^3 & X_5^1 & X_5^2 & X_5^3 & X_6^1 & X_6^2 & X_6^3 & X_7^1 & X_7^2 & X_7^3 & X_8^1 & X_8^2 & X_8^3 & X_9^1 & X_9^2 & X_9^3 & X_{10}^1 & X_{10}^2 & X_{10}^3 & X_{11}^1 & X_{11}^2 & X_{11}^3 \\
      Y_1 &     1 &     0 &     0 &     0 &     1 &     0 &     0 &    \n &     0 &    \n &     0 &     0 &     0 &     1 &    \n &     0 &     0 &     0 &     1 &    \n &     1 &     0 &    \n &     0 &    \n &     0 &    \n &        1 &        0 &        1 &        0 &        0 &        0 \\
      Y_2 &     0 &     0 &     0 &     0 &     0 &     0 &     0 &     0 &     0 &    \n &     0 &     0 &     1 &     0 &     0 &     0 &     0 &     0 &     0 &     0 &     0 &     0 &    \n &     0 &     0 &    \n &     0 &        0 &        1 &        0 &        0 &        0 &        0 \\
      Y_3 &     0 &     1 &     0 &    \n &     1 &     0 &     0 &     0 &    \n &     0 &    \n &     0 &     0 &     1 &     0 &     0 &     0 &     0 &     0 &     0 &     0 &     0 &     0 &    \n &    \n &     0 &    \n &        0 &        0 &        1 &        1 &        0 &        0 \\
      Y_4 &     0 &     0 &     0 &    \n &     0 &     0 &     0 &     0 &     0 &     0 &    \n &     0 &     0 &     0 &     0 &     0 &     0 &     0 &     0 &     1 &     0 &     0 &     0 &    \n &     0 &    \n &     0 &        0 &        0 &        0 &        0 &        1 &        0 \\
      Y_5 &     0 &     0 &     0 &     0 &    \n &     0 &     0 &     0 &     0 &     0 &     0 &     0 &     0 &    \n &     0 &     0 &     0 &     0 &     0 &     1 &     0 &     0 &     0 &     0 &     0 &     0 &     0 &        0 &        0 &       \n &        0 &        0 &        1 \\
      Y_6 &     0 &     0 &     1 &     0 &     0 &     0 &    \n &     1 &    \n &     0 &     0 &    \n &     0 &     0 &     1 &     0 &     0 &     0 &     0 &     0 &     0 &     1 &     0 &     0 &     0 &     0 &     0 &       \n &        0 &        0 &        1 &        0 &        0 \\
      Y_7 &     0 &     0 &     0 &     0 &     0 &     0 &    \n &     0 &     0 &     0 &     0 &    \n &     0 &     0 &     0 &     1 &     0 &     0 &     0 &     0 &     1 &     0 &     0 &     0 &     0 &     0 &     0 &        0 &       \n &        0 &        0 &        1 &        0 \\
      Y_8 &     0 &     0 &     0 &     0 &     0 &     0 &     0 &    \n &     0 &     0 &     0 &     0 &     0 &     0 &    \n &     0 &     1 &     0 &     0 &     0 &     1 &     0 &    \n &     0 &     0 &     0 &    \n &        0 &        0 &        0 &        0 &        0 &        1 \\
      Y_9 &     0 &     0 &     0 &     0 &     0 &     1 &     0 &     0 &    \n &     0 &     0 &     0 &     0 &     0 &     0 &     0 &     0 &     1 &     0 &     0 &     0 &     0 &     0 &    \n &     0 &     0 &    \n &        0 &        0 &        1 &        0 &        0 &        0
    }
  \]

  It is easily verified that $d_1\circ d_2 =0$, $\dim \ker d_1(G_5) = 24$, and $\mathrm{rank}\, d_2(G_5) = 23$, so the multiplicity of the Specht module $\S{2^2,1^2}$ in $H_{1}(G_5;\bbC)$ is one.
  We note that a generator of the homology $H_1(G_5;\bbC)|_{\S{2^2,1^2}}$ is $X_9^3+X_{10}^3+X_{11}^3$.
\end{proof}

\noindent{\bf Lemma~\ref{lemma.G6}.} The multiplicity of the $\fS_6$-module $\S{2^2,1^2}$ in $H_{1}(G_6,\bbC)$ is $0$.
\begin{proof}
  We have the chain modules $C_0(G_6)\big|_{\S{2^2,1^2}}\cong \S{2^2,1^2}^{\oplus 9}$, $C_1(G_6)\big|_{\S{2^2,1^2}}\cong \S{2^2,1^2}^{\oplus 33}$, and $C_2(G_6)\big|_{\S{2^2,1^2}}\cong \S{2^2,1^2}^{\oplus 24}$.
  The differentials $d_2(G_6)$ and $d_1(G_6)$ are described by the following matrices (here, $a=10$ and $b=11$):
  \[
    \setlength{\arraycolsep}{.75pt}
    \renewcommand{\arraystretch}{1.0}
    d_2(G_6) =\!
    \tiny
    \kbordermatrix{
               & W_{18} & W_{19} & W_{1a} & W_{1b} & W_{26} & W_{27} & W_{2a} & W_{2b} & W_{35} & W_{37} & W_{39} & W_{3b} & W_{45} & W_{46} & W_{47} & W_{48} & W_{49} & W_{4a} & W_{5a} & W_{5b} & W_{69} & W_{6b} & W_{78} & W_{8b} \\
      X_1^1    &     \n &      0 &      1 &      0 &      0 &      0 &      0 &      0 &      0 &      0 &      0 &      0 &      0 &      0 &      0 &      0 &      0 &      0 &      0 &      0 &      0 &      0 &      0 &      0 \\
      X_1^2    &      0 &     \n &      1 &     \n &      0 &      0 &      0 &      0 &      0 &      0 &      0 &      0 &      0 &      0 &      0 &      0 &      0 &      0 &      0 &      0 &      0 &      0 &      0 &      0 \\
      X_1^3    &      0 &      0 &      0 &     \n &      0 &      0 &      0 &      0 &      0 &      0 &      0 &      0 &      0 &      0 &      0 &      0 &      0 &      0 &      0 &      0 &      0 &      0 &      0 &      0 \\
      X_2^1    &      0 &      0 &      0 &      0 &     \n &      0 &      1 &      0 &      0 &      0 &      0 &      0 &      0 &      0 &      0 &      0 &      0 &      0 &      0 &      0 &      0 &      0 &      0 &      0 \\
      X_2^1    &      0 &      0 &      0 &      0 &      0 &     \n &      1 &     \n &      0 &      0 &      0 &      0 &      0 &      0 &      0 &      0 &      0 &      0 &      0 &      0 &      0 &      0 &      0 &      0 \\
      X_2^3    &      0 &      0 &      0 &      0 &      0 &      0 &      0 &     \n &      0 &      0 &      0 &      0 &      0 &      0 &      0 &      0 &      0 &      0 &      0 &      0 &      0 &      0 &      0 &      0 \\
      X_3^1    &      0 &      0 &      0 &      0 &      0 &      0 &      0 &      0 &     \n &      0 &      1 &      0 &      0 &      0 &      0 &      0 &      0 &      0 &      0 &      0 &      0 &      0 &      0 &      0 \\
      X_3^2    &      0 &      0 &      0 &      0 &      0 &      0 &      0 &      0 &      0 &     \n &      1 &     \n &      0 &      0 &      0 &      0 &      0 &      0 &      0 &      0 &      0 &      0 &      0 &      0 \\
      X_3^3    &      0 &      0 &      0 &      0 &      0 &      0 &      0 &      0 &      0 &      0 &      0 &     \n &      0 &      0 &      0 &      0 &      0 &      0 &      0 &      0 &      0 &      0 &      0 &      0 \\
      X_4^1    &      0 &      0 &      0 &      0 &      0 &      0 &      0 &      0 &      0 &      0 &      0 &      0 &     \n &      0 &      0 &      1 &     \n &      0 &      0 &      0 &      0 &      0 &      0 &      0 \\
      X_4^2    &      0 &      0 &      0 &      0 &      0 &      0 &      0 &      0 &      0 &      0 &      0 &      0 &      0 &     \n &      0 &      1 &      0 &     \n &      0 &      0 &      0 &      0 &      0 &      0 \\
      X_4^3    &      0 &      0 &      0 &      0 &      0 &      0 &      0 &      0 &      0 &      0 &      0 &      0 &      0 &      0 &     \n &      0 &      1 &     \n &      0 &      0 &      0 &      0 &      0 &      0 \\
      X_5^1    &      0 &      0 &      0 &      0 &      0 &      0 &      0 &      0 &      1 &      0 &      0 &      0 &      0 &      0 &      0 &      0 &      0 &      0 &      1 &      0 &      0 &      0 &      0 &      0 \\
      X_5^2    &      0 &      0 &      0 &      0 &      0 &      0 &      0 &      0 &      0 &      0 &      0 &      0 &      0 &      0 &      0 &      0 &      0 &      0 &      1 &     \n &      0 &      0 &      0 &      0 \\
      X_5^3    &      0 &      0 &      0 &      0 &      0 &      0 &      0 &      0 &      0 &      0 &      0 &      0 &      1 &      0 &      0 &      0 &      0 &      0 &      0 &     \n &      0 &      0 &      0 &      0 \\
      X_6^1    &      0 &      0 &      0 &      0 &      1 &      0 &      0 &      0 &      0 &      0 &      0 &      0 &      0 &      0 &      0 &      0 &      0 &      0 &      0 &      0 &      1 &      0 &      0 &      0 \\
      X_6^2    &      0 &      0 &      0 &      0 &      0 &      0 &      0 &      0 &      0 &      0 &      0 &      0 &      0 &      0 &      0 &      0 &      0 &      0 &      0 &      0 &      1 &     \n &      0 &      0 \\
      X_6^3    &      0 &      0 &      0 &      0 &      0 &      0 &      0 &      0 &      0 &      0 &      0 &      0 &      0 &      1 &      0 &      0 &      0 &      0 &      0 &      0 &      0 &     \n &      0 &      0 \\
      X_7^1    &      0 &      0 &      0 &      0 &      0 &      1 &      0 &      0 &      0 &      0 &      0 &      0 &      0 &      0 &      0 &      0 &      0 &      0 &      0 &      0 &      0 &      0 &      1 &      0 \\
      X_7^2    &      0 &      0 &      0 &      0 &      0 &      0 &      0 &      0 &      0 &      1 &      0 &      0 &      0 &      0 &      0 &      0 &      0 &      0 &      0 &      0 &      0 &      0 &      1 &      0 \\
      X_7^3    &      0 &      0 &      0 &      0 &      0 &      0 &      0 &      0 &      0 &      0 &      0 &      0 &      0 &      0 &      1 &      0 &      0 &      0 &      0 &      0 &      0 &      0 &      0 &      0 \\
      X_8^1    &      1 &      0 &      0 &      0 &      0 &      0 &      0 &      0 &      0 &      0 &      0 &      0 &      0 &      0 &      0 &      0 &      0 &      0 &      0 &      0 &      0 &      0 &     \n &      0 \\
      X_8^2    &      0 &      0 &      0 &      0 &      0 &      0 &      0 &      0 &      0 &      0 &      0 &      0 &      0 &      0 &      0 &      0 &      0 &      0 &      0 &      0 &      0 &      0 &     \n &     \n \\
      X_8^3    &      0 &      0 &      0 &      0 &      0 &      0 &      0 &      0 &      0 &      0 &      0 &      0 &      0 &      0 &      0 &      1 &      0 &      0 &      0 &      0 &      0 &      0 &      0 &     \n \\
      X_9^1    &      0 &      1 &      0 &      0 &      0 &      0 &      0 &      0 &      0 &      0 &      0 &      0 &      0 &      0 &      0 &      0 &      0 &      0 &      0 &      0 &     \n &      0 &      0 &      0 \\
      X_9^2    &      0 &      0 &      0 &      0 &      0 &      0 &      0 &      0 &      0 &      0 &      1 &      0 &      0 &      0 &      0 &      0 &      0 &      0 &      0 &      0 &     \n &      0 &      0 &      0 \\
      X_9^3    &      0 &      0 &      0 &      0 &      0 &      0 &      0 &      0 &      0 &      0 &      0 &      0 &      0 &      0 &      0 &      0 &      1 &      0 &      0 &      0 &      0 &      0 &      0 &      0 \\
      X_{10}^1 &      0 &      0 &      1 &      0 &      0 &      0 &      0 &      0 &      0 &      0 &      0 &      0 &      0 &      0 &      0 &      0 &      0 &      0 &     \n &      0 &      0 &      0 &      0 &      0 \\
      X_{10}^2 &      0 &      0 &      0 &      0 &      0 &      0 &      1 &      0 &      0 &      0 &      0 &      0 &      0 &      0 &      0 &      0 &      0 &      0 &     \n &      0 &      0 &      0 &      0 &      0 \\
      X_{10}^3 &      0 &      0 &      0 &      0 &      0 &      0 &      0 &      0 &      0 &      0 &      0 &      0 &      0 &      0 &      0 &      0 &      0 &      1 &      0 &      0 &      0 &      0 &      0 &      0 \\
      X_{11}^1 &      0 &      0 &      0 &      1 &      0 &      0 &      0 &      0 &      0 &      0 &      0 &      0 &      0 &      0 &      0 &      0 &      0 &      0 &      0 &     \n &      0 &      1 &      0 &      0 \\
      X_{11}^2 &      0 &      0 &      0 &      0 &      0 &      0 &      0 &      1 &      0 &      0 &      0 &      0 &      0 &      0 &      0 &      0 &      0 &      0 &      0 &     \n &      0 &      0 &      0 &      1 \\
      X_{11}^3 &      0 &      0 &      0 &      0 &      0 &      0 &      0 &      0 &      0 &      0 &      0 &      1 &      0 &      0 &      0 &      0 &      0 &      0 &      0 &      0 &      0 &     \n &      0 &      1
    }
  \]
  \[
    \setlength{\arraycolsep}{1pt}
    \renewcommand{\arraystretch}{1.0}
    \tiny
    d_1\!\!=\!\!
    \kbordermatrix{
          & X_1^1 & X_1^2 & X_1^3 & X_2^1 & X_2^2 & X_2^3 & X_3^1 & X_3^2 & X_3^3 & X_4^1 & X_4^2 & X_4^3 & X_5^1 & X_5^2 & X_5^3 & X_6^1 & X_6^2 & X_6^3 & X_7^1 & X_7^2 & X_7^3 & X_8^1 & X_8^2 & X_8^3 & X_9^1 & X_9^2 & X_9^3 & X_{10}^1 & X_{10}^2 & X_{10}^3 & X_{11}^1 & X_{11}^2 & X_{11}^3 \\
      Y_1 &     1 &     0 &     0 &     0 &     0 &     0 &    \n &     0 &     0 &     0 &    \n &     0 &    \n &     0 &     0 &     0 &     1 &    \n &     0 &     0 &     0 &     1 &    \n &     1 &     0 &     1 &     0 &       \n &        0 &       \n &        0 &        0 &        0 \\
      Y_2 &     0 &     0 &     0 &     1 &     0 &     0 &    \n &     0 &     0 &     0 &     0 &     0 &    \n &     0 &     0 &     1 &     0 &     0 &     0 &     0 &     0 &     0 &     0 &     0 &     0 &     1 &     0 &        0 &       \n &        0 &        0 &        0 &        0 \\
      Y_3 &     0 &     1 &     0 &     0 &     0 &     0 &     0 &     0 &     0 &     0 &     0 &    \n &     0 &    \n &     0 &     0 &     1 &     0 &     0 &     0 &    \n &     0 &     0 &     0 &     1 &     0 &     1 &       \n &        0 &       \n &        1 &        0 &        0 \\
      Y_4 &     0 &     0 &     0 &     0 &     1 &     0 &     0 &     0 &     0 &     0 &     0 &     0 &     0 &    \n &     0 &     0 &     0 &     0 &     1 &     0 &     0 &     0 &     1 &     0 &     0 &     0 &     0 &        0 &       \n &        0 &        0 &        1 &        0 \\
      Y_5 &     0 &     0 &     0 &     0 &     0 &     0 &     0 &     1 &     0 &     0 &     0 &     0 &     0 &     0 &     0 &     0 &    \n &     0 &     0 &     1 &     0 &     0 &     1 &     0 &     0 &    \n &     0 &        0 &        0 &        0 &        0 &        0 &        1 \\
      Y_6 &     0 &     0 &     1 &     0 &     0 &     0 &     0 &     0 &     0 &    \n &     1 &    \n &     0 &     0 &    \n &     0 &     0 &     1 &     0 &     0 &    \n &     0 &     0 &     0 &     0 &     0 &     0 &        0 &        0 &        0 &        1 &        0 &        0 \\
      Y_7 &     0 &     0 &     0 &     0 &     0 &     1 &     0 &     0 &     0 &    \n &     0 &     0 &     0 &     0 &    \n &     0 &     0 &     0 &     0 &     0 &     0 &     0 &     0 &     1 &     0 &     0 &    \n &        0 &        0 &        0 &        0 &        1 &        0 \\
      Y_8 &     0 &     0 &     0 &     0 &     0 &     0 &     0 &     0 &     1 &     0 &    \n &     0 &     0 &     0 &     0 &     0 &     0 &    \n &     0 &     0 &     0 &     0 &     0 &     1 &     0 &     0 &     0 &        0 &        0 &       \n &        0 &        0 &        1 \\
      Y_9 &     0 &     0 &     0 &     0 &     0 &     0 &     0 &     0 &     0 &     0 &     0 &    \n &     0 &     0 &     0 &     0 &     0 &     0 &     0 &     0 &    \n &     0 &     0 &     0 &     0 &     0 &     1 &        0 &        0 &       \n &        0 &        0 &        0
    }
  \]

  It is easily verified that $d_1\circ d_2 =0$, $\dim \ker d_1(G_6) = 24$, and $\mathrm{rank}\, d_2(G_6) = 24$, so the multiplicity of the Specht module $\S{2^2,1^2}$ in $H_{1}(G_6;\bbC)$ is zero.
\end{proof}

\begin{remark}
  \label{rem.G6}
  Let $g = -W_{18}-W_{19}-W_{1a}+W_{26}+W_{27}+W_{2a}+W_{35}+W_{37}+W_{39}-W_{45}-W_{46}+W_{48}+2W_{49}+2W_{4a}-W_{5a}-W_{5b}-W_{69}-W_{6b}-W_{78}+W_{8b} \in C_2(G_6)$ and $h = X_9^2+X_9^3+X_{10}^2+X_{10}^3+X_{11}^2+X_{11}^3 \in C_1(G_6)$.
  We note that over $\bbZ$, $h\notin \im d_2$, $d_2(g)=2h$ and $d_1(h)=0$, so $h$ generates $\bbZ_2$-torsion in $H_{1}(G_6;\bbZ)$.
\end{remark}

\noindent{\bf Theorem~\ref{thm.K33}.} The chromatic symmetric homology $H_1(K_{3,3};\bbZ)$ contains $\bbZ_2$-torsion.
\begin{proof}
  We have $C_0(K_{3,3})\big|_{\S{2^2,1^2}} \cong \S{2^2,1^2}^{\oplus 9}$, $C_1(K_{3,3})\big|_{\S{2^2,1^2}} \cong \S{2^2,1^2}^{\oplus 27}$, and $C_2(K_{3,3})\big|_{\S{2^2,1^2}} \cong \S{2^2,1^2}^{\oplus 18}$.
  The differentials $d_2(K_{3,3})$ and $d_1(K_{3,3})$ are described by the following matrices:
  \[
    \setlength{\arraycolsep}{1pt}
    \renewcommand{\arraystretch}{1.0}
    \tiny
    d_2(K_{3,3}) =
    \kbordermatrix{
            & W_{16} & W_{17} & W_{18} & W_{19} & W_{24} & W_{25} & W_{27} & W_{29} & W_{34} & W_{35} & W_{36} & W_{38} & W_{48} & W_{49} & W_{56} & W_{57} & W_{69} & W_{78} \\
      X_1^1 &     \n &      0 &      1 &      0 &      0 &      0 &      0 &      0 &      0 &      0 &      0 &      0 &      0 &      0 &      0 &      0 &      0 &      0 \\
      X_1^2 &      0 &      0 &      1 &     \n &      0 &      0 &      0 &      0 &      0 &      0 &      0 &      0 &      0 &      0 &      0 &      0 &      0 &      0 \\
      X_1^3 &      0 &     \n &      0 &     \n &      0 &      0 &      0 &      0 &      0 &      0 &      0 &      0 &      0 &      0 &      0 &      0 &      0 &      0 \\
      X_2^1 &      0 &      0 &      0 &      0 &     \n &      0 &     \n &      0 &      0 &      0 &      0 &      0 &      0 &      0 &      0 &      0 &      0 &      0 \\
      X_2^2 &      0 &      0 &      0 &      0 &      0 &     \n &      0 &     \n &      0 &      0 &      0 &      0 &      0 &      0 &      0 &      0 &      0 &      0 \\
      X_2^3 &      0 &      0 &      0 &      0 &      0 &      0 &      1 &     \n &      0 &      0 &      0 &      0 &      0 &      0 &      0 &      0 &      0 &      0 \\
      X_3^1 &      0 &      0 &      0 &      0 &      0 &      0 &      0 &      0 &     \n &      0 &      1 &      0 &      0 &      0 &      0 &      0 &      0 &      0 \\
      X_3^2 &      0 &      0 &      0 &      0 &      0 &      0 &      0 &      0 &      0 &      0 &      1 &     \n &      0 &      0 &      0 &      0 &      0 &      0 \\
      X_3^3 &      0 &      0 &      0 &      0 &      0 &      0 &      0 &      0 &      0 &     \n &      0 &     \n &      0 &      0 &      0 &      0 &      0 &      0 \\
      X_4^1 &      0 &      0 &      0 &      0 &      1 &      0 &      0 &      0 &      0 &      0 &      0 &      0 &      1 &      0 &      0 &      0 &      0 &      0 \\
      X_4^2 &      0 &      0 &      0 &      0 &      0 &      0 &      0 &      0 &      0 &      0 &      0 &      0 &      1 &     \n &      0 &      0 &      0 &      0 \\
      X_4^3 &      0 &      0 &      0 &      0 &      0 &      0 &      0 &      0 &      1 &      0 &      0 &      0 &      0 &     \n &      0 &      0 &      0 &      0 \\
      X_5^1 &      0 &      0 &      0 &      0 &      0 &      0 &      0 &      0 &      0 &      0 &      0 &      0 &      0 &      0 &      1 &     \n &      0 &      0 \\
      X_5^2 &      0 &      0 &      0 &      0 &      0 &      1 &      0 &      0 &      0 &      0 &      0 &      0 &      0 &      0 &      1 &      0 &      0 &      0 \\
      X_5^3 &      0 &      0 &      0 &      0 &      0 &      0 &      0 &      0 &      0 &      1 &      0 &      0 &      0 &      0 &      0 &      1 &      0 &      0 \\
      X_6^1 &      1 &      0 &      0 &      0 &      0 &      0 &      0 &      0 &      0 &      0 &      0 &      0 &      0 &      0 &     \n &      0 &      0 &      0 \\
      X_6^2 &      0 &      0 &      0 &      0 &      0 &      0 &      0 &      0 &      0 &      0 &      0 &      0 &      0 &      0 &     \n &      0 &     \n &      0 \\
      X_6^3 &      0 &      0 &      0 &      0 &      0 &      0 &      0 &      0 &      0 &      0 &      1 &      0 &      0 &      0 &      0 &      0 &     \n &      0 \\
      X_7^1 &      0 &      1 &      0 &      0 &      0 &      0 &      0 &      0 &      0 &      0 &      0 &      0 &      0 &      0 &      0 &      1 &      0 &      0 \\
      X_7^2 &      0 &      0 &      0 &      0 &      0 &      0 &      1 &      0 &      0 &      0 &      0 &      0 &      0 &      0 &      0 &      0 &      0 &     \n \\
      X_7^3 &      0 &      0 &      0 &      0 &      0 &      0 &      0 &      0 &      0 &      0 &      0 &      0 &      0 &      0 &      0 &     \n &      0 &     \n \\
      X_8^1 &      0 &      0 &      1 &      0 &      0 &      0 &      0 &      0 &      0 &      0 &      0 &      0 &     \n &      0 &      0 &      0 &      0 &      0 \\
      X_8^2 &      0 &      0 &      0 &      0 &      0 &      0 &      0 &      0 &      0 &      0 &      0 &      0 &     \n &      0 &      0 &      0 &      0 &      1 \\
      X_8^3 &      0 &      0 &      0 &      0 &      0 &      0 &      0 &      0 &      0 &      0 &      0 &      1 &      0 &      0 &      0 &      0 &      0 &      1 \\
      X_9^1 &      0 &      0 &      0 &      1 &      0 &      0 &      0 &      0 &      0 &      0 &      0 &      0 &      0 &     \n &      0 &      0 &      0 &      0 \\
      X_9^2 &      0 &      0 &      0 &      0 &      0 &      0 &      0 &      0 &      0 &      0 &      0 &      0 &      0 &     \n &      0 &      0 &      1 &      0 \\
      X_9^3 &      0 &      0 &      0 &      0 &      0 &      0 &      0 &      1 &      0 &      0 &      0 &      0 &      0 &      0 &      0 &      0 &      1 &      0 \\
    }
  \]

  \[
    \setlength{\arraycolsep}{1pt}
    \renewcommand{\arraystretch}{1.0}
    \tiny
    d_1(K_{3,3}) =
    \kbordermatrix{
          & X_1^1 & X_1^2 & X_1^3 & X_2^1 & X_2^2 & X_2^3 & X_3^1 & X_3^2 & X_3^3 & X_4^1 & X_4^2 & X_4^3 & X_5^1 & X_5^2 & X_5^3 & X_6^1 & X_6^2 & X_6^3 & X_7^1 & X_7^2 & X_7^3 & X_8^1 & X_8^2 & X_8^3 & X_9^1 & X_9^2 & X_9^3 \\
      Y_1 &     1 &     0 &     0 &    \n &     0 &     0 &     0 &    \n &     0 &    \n &     0 &     0 &     0 &     0 &     0 &     1 &    \n &     1 &     0 &    \n &     0 &    \n &     0 &    \n &     0 &     0 &     0 \\
      Y_2 &     0 &     1 &     0 &     0 &     0 &     0 &     0 &     0 &    \n &     0 &    \n &     0 &     0 &     0 &    \n &     0 &     0 &     0 &     0 &     0 &    \n &    \n &     0 &    \n &     1 &     0 &     0 \\
      Y_3 &     0 &     0 &     1 &     0 &     0 &     0 &    \n &     1 &    \n &     0 &     0 &    \n &     0 &     0 &    \n &     0 &     0 &     0 &     1 &     0 &     0 &     0 &     0 &     0 &     1 &     0 &     0 \\
      Y_4 &     0 &     0 &     0 &    \n &     0 &     0 &     0 &     0 &     0 &    \n &     0 &     0 &     0 &     0 &     0 &     0 &     0 &     0 &     0 &    \n &     0 &     0 &    \n &     0 &     0 &     0 &     0 \\
      Y_5 &     0 &     0 &     0 &     0 &     0 &     0 &     0 &     0 &     0 &     0 &    \n &     0 &     1 &     0 &     0 &     0 &     1 &     0 &     0 &     0 &    \n &     0 &    \n &     0 &     0 &     1 &     0 \\
      Y_6 &     0 &     0 &     0 &     0 &     0 &     0 &    \n &     0 &     0 &     0 &     0 &    \n &     0 &     0 &     0 &     0 &     0 &     1 &     0 &     0 &     0 &     0 &     0 &     0 &     0 &     1 &     0 \\
      Y_7 &     0 &     0 &     0 &     0 &     1 &     0 &     0 &     0 &     0 &     0 &     0 &     0 &     0 &     1 &     0 &     0 &     1 &     0 &     0 &     0 &     0 &     0 &     0 &     0 &     0 &     0 &     1 \\
      Y_8 &     0 &     0 &     0 &     0 &     0 &     1 &     0 &    \n &     0 &     0 &     0 &     0 &     0 &     0 &     0 &     0 &     0 &     1 &     0 &    \n &     0 &     0 &     0 &    \n &     0 &     0 &     1 \\
      Y_9 &     0 &     0 &     0 &     0 &     0 &     0 &     0 &     0 &    \n &     0 &     0 &     0 &     0 &     0 &    \n &     0 &     0 &     0 &     0 &     0 &    \n &     0 &     0 &    \n &     0 &     0 &     0 \\
    }
  \]
  It is easily verified that $d_1\circ d_2 =0$, $\dim \ker d_1(K_{3,3}) = 18$, and $\mathrm{rank}\, d_2(K_{3,3}) = 18$.
  Let $g = W_{16}-W_{17}+W_{18}+W_{19} -W_{24}-W_{25}+W_{27}+W_{29} +W_{34}-W_{35}+W_{36}+W_{38} +W_{48}+W_{49} +W_{56}+W_{57} -W_{69}+W_{78}\in C_2(K_{3,3})$ and $h = X_6^3-X_7^3+X_8^3-X_9^2 \in C_1(K_{3,3})$.
  We note that $h\notin \im d_2$, $d_2(g)=2h$ and $d_1(h)=0$, so $h$ generates $\bbZ_2$-torsion in $H_{1}(K_{3,3};\bbZ)$.
\end{proof}

\section{Computational data and conjectures}
\label{computations}
In this section, we provide the results of homology computations over $\bbZ$ for all connected graphs up to 6 vertices.
The program we used to obtain these is available at \cite{code}.
Given a graph $G$, the homology $H_*(G;\ZZ)$ is listed using the notation $H_0(G;\ZZ), H_1(G;\ZZ), \dots, H_k(G;\ZZ)$ where $k$ is maximal among indices with nonzero homology.


\end{center}
\end{conjecture}

\begin{conjecture}
  Let $C_n$ be the cycle graph with $n$ vertices.
  Then $H_{i}(C_n;\ZZ)$ is a free $\ZZ$-module of rank ${\binom{n-1}{i}}$ for $0\leq i\leq n-1$.
\end{conjecture}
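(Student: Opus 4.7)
The plan is to proceed by induction on $n$, with the base case $n=3$ verified in Appendix~\ref{app.K3} and further small cases tabulated in Appendix~\ref{computations}. Label the edges of $C_n$ lexicographically as $e_1 < \cdots < e_n$ with $e_n = (1,n)$, and let $P_n = C_n - e_n$ be the path on $n$ vertices. Partitioning spanning subgraphs of $C_n$ according to whether they contain $e_n$ gives a short exact sequence of chain complexes
\[
  0 \to C_\bullet(P_n) \to C_\bullet(C_n) \to Q_\bullet \to 0,
\]
where $Q_\bullet$ is the quotient complex generated by subgraphs containing $e_n$. The induced long exact sequence in homology reduces the problem to computing $H_*(P_n;\bbZ)$ and $H_*(Q_\bullet;\bbZ)$ separately.

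First I would show that $H_0(P_n;\bbZ) \cong \bbZ$ and $H_i(P_n;\bbZ) = 0$ for $i \geq 1$, as suggested by the data for paths on $n \leq 6$ vertices in Appendix~\ref{computations}. Since every subgraph of $P_n$ is a forest, an acyclic matching on $\calB(P_n)$ compatible with the module structure -- for instance derived from Lampret's steepness matchings~\cite{lampret2019chain} and pairing each $F$ with $F \triangle \{e_1\}$ -- should reduce $C_\bullet(P_n)$ to a chain complex concentrated in degree zero with a single copy of $\S{1^n} \cong \bbZ$. The inclusion $C_\bullet(P_n) \hookrightarrow C_\bullet(C_n)$ then induces an isomorphism on $H_0$ (both compute the trivial $\fS_n$ representation $\S{1^n}$, since both graphs are connected), so the long exact sequence collapses to $H_i(C_n;\bbZ) \cong H_i(Q_\bullet;\bbZ)$ for $i \geq 1$ and $H_0(Q_\bullet;\bbZ) = 0$.

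It then suffices to prove $H_i(Q_\bullet;\bbZ) \cong \bbZ^{\binom{n-1}{i}}$. Subgraphs of $C_n$ containing $e_n$ biject with subgraphs $F' \subseteq E(P_n)$ via $F = F' \cup \{e_n\}$, shifting homological degree by $+1$. The component structure of $F' \cup \{e_n\}$ is that of $F'$ with the two components containing vertices $1$ and $n$ merged, except when $F' = E(P_n)$, where $F' \cup \{e_n\} = C_n$ forms a cycle. Although the contraction $C_n/e_n \cong C_{n-1}$ suggests a direct relationship with $C_\bullet(C_{n-1})$, the $\fS_n$-modules $\calM_{F' \cup \{e_n\}}$ do not correspond on the nose to $\fS_{n-1}$-modules appearing in $C_\bullet(C_{n-1})$: the merging of components is reflected only after inducing up via $\Ind_{\fS_{n-1}}^{\fS_n}$ or an analogous functor, and the exceptional case $F' = E(P_n)$ contributes an extra summand in the top degree.

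The main obstacle is this analysis of $Q_\bullet$. A promising avenue is to iterate the short exact sequence by peeling off a second edge (say $e_{n-1}$), reducing $Q_\bullet$ to combinations of chain complexes for smaller cycles and paths whose homology is known inductively. Alternatively, one might set up a finer acyclic matching on the poset of $F' \subseteq E(P_n)$ adapted to the module structure, or construct $\binom{n-1}{i}$ explicit cycle generators of $H_i(C_n;\bbZ)$ indexed by $i$-subsets of $\{e_1,\ldots,e_{n-1}\}$ and verify their linear independence via an explicit pairing with cochains. Either route should ultimately resonate with Pascal's identity $\binom{n-1}{i} = \binom{n-2}{i-1} + \binom{n-2}{i}$, dovetailing with the inductive hypothesis on $C_{n-1}$ and producing the claimed binomial ranks without torsion.
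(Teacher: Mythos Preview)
The statement you are attempting to prove is listed in the paper as a \emph{conjecture}, not a theorem; the paper offers no proof, so there is nothing to compare your argument against. What you have written is, by your own framing, a plan with an acknowledged ``main obstacle'' rather than a proof, so it does not settle the conjecture either.

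Two concrete issues with the plan itself. First, the matching you propose on $\calB(P_n)$, pairing $F$ with $F\triangle\{e_1\}$, cannot work as an algebraic Morse matching: in $q$-degree zero the edge map $d_{\varepsilon(F\cup\{e_1\},F)}$ is the inclusion $\calM_{F\cup\{e_1\}}\hookrightarrow\calM_F$, and this is an isomorphism only when $e_1$ lies in a cycle of $F\cup\{e_1\}$. Since every subgraph of a path is a forest, adding $e_1$ always merges two components, so the matched map is a proper inclusion in every single instance and nothing cancels. The computational evidence that $H_*(P_n;\bbZ)$ is concentrated in degree zero may well be correct, but this particular mechanism does not establish it. Second, your difficulty with the quotient $Q_\bullet$ is exactly the phenomenon the paper flags in the remark after Conjecture~\ref{spanconj}: chromatic symmetric homology has no deletion--contraction long exact sequence, because the modules $\calM_{F'\cup\{e_n\}}$ record component \emph{sizes}, not just component \emph{counts}, and so the quotient cannot be identified with a shifted copy of $C_\bullet(C_n/e_n)$. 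Iterating the short exact sequence by peeling off further edges does not evade this; at each stage you face the same mismatch. Any successful argument will need a genuinely new idea to control $Q_\bullet$, or to bypass the filtration-by-edge approach entirely.
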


Given a graph $G$, let $\textnormal{span}_0(G)$ denote the homological span of the degree 0 chromatic symmetric homology of $G$. Since the minimal grading with nonzero homology is always $0$, we have $\textnormal{span}_0(G)=k+1$ where $k$ is maximal among indices such that $H_k(G;\ZZ)\neq 0$.

\begin{conjecture}
  Given any graph $G$, chromatic symmetric homology groups $H_\ell(G;\bbC)$ are non-trivial for all $0\leq \ell\leq \textnormal{span}_0(G)-1$.
\end{conjecture}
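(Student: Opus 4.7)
The plan is to prove the conjecture by induction on the number of edges $m$ of $G$, using a deletion-edge long exact sequence analogous to the skein long exact sequence in Khovanov homology. The base case $m=0$ is immediate: $C_*(G;\bbC)$ is concentrated in homological degree zero with $H_0(G;\bbC) = \M{1^n} \neq 0$, so $\textnormal{span}_0(G) = 1$ and the claim holds trivially.

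For the inductive step, fix an edge $e$ of $G$ and partition the Boolean lattice $\calB(G)$ of spanning subgraphs according to whether $e \in F$. The subgraphs with $e \notin F$ form a subcomplex isomorphic to $C_*(G - e;\bbC)$, and the quotient $\mathcal{Q}_*(G,e)$ has chain groups $\mathcal{Q}_i \cong \bigoplus_{F' \subseteq E(G - e),\, |F'| = i - 1} \calM_{F' \cup \{e\}}$ with differential that removes only edges other than $e$. The resulting long exact sequence is
$$\cdots \to H_{\ell+1}(\mathcal{Q}) \xrightarrow{\partial} H_\ell(G - e;\bbC) \to H_\ell(G;\bbC) \to H_\ell(\mathcal{Q}) \xrightarrow{\partial} H_{\ell-1}(G - e;\bbC) \to \cdots.$$
The inductive hypothesis applied to $G - e$ gives non-triviality of $H_\ell(G - e;\bbC)$ for all $0 \leq \ell \leq \textnormal{span}_0(G - e) - 1$.

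A similar filtration on $\mathcal{Q}_*(G,e)$ by a second edge lets one analyze $H_*(\mathcal{Q})$ by a secondary induction; moreover, $\mathcal{Q}_*(G,e)$ bears a strong structural resemblance to a chromatic symmetric chain complex (its modules are induced from Young subgroups reflecting that the endpoints of $e$ lie in the same component), so one expects a comparable non-vanishing range for $H_*(\mathcal{Q})$. With these two non-vanishing statements in hand, I would argue that if $H_\ell(G;\bbC) = 0$ at some interior $0 < \ell < \textnormal{span}_0(G) - 1$, then exactness forces the connecting homomorphisms $H_{\ell+1}(\mathcal{Q}) \twoheadrightarrow H_\ell(G - e;\bbC)$ and $H_\ell(\mathcal{Q}) \hookrightarrow H_{\ell-1}(G - e;\bbC)$, and a Frobenius-characteristic count of Specht module multiplicities in each term should yield a contradiction.

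The main obstacle is aligning the three spans $\textnormal{span}_0(G - e)$, $\textnormal{span}_0(\mathcal{Q}_*(G,e))$, and $\textnormal{span}_0(G)$ so that the induction covers every interior degree of $G$. The spans may differ by one, and in principle the connecting maps could transport all non-trivial classes across the long exact sequence, producing a gap in $H_*(G;\bbC)$. Ruling this out will likely require a judicious choice of $e$ — for example, an edge on a shortest cycle, or a non-bridge chosen so that $\textnormal{span}_0(G - e) = \textnormal{span}_0(G)$ — and may need to be supplemented with explicit cycle constructions in the broken circuit model of Chandler-Sazdanovic~\cite{Chan19,CS19}, where each spanning tree of $G$ naturally contributes a class in the top degree and intermediate degrees admit natural generators from partial spanning structures with prescribed broken-circuit content.
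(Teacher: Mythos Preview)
The statement you are attempting to prove is presented in the paper as an open \emph{conjecture}, based on computational data for graphs with at most six vertices; the paper offers no proof. So there is nothing to compare your argument against, and the honest summary is that you are proposing a strategy for an open problem rather than reconstructing a known proof.

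As a strategy, your proposal has a genuine gap that you yourself flag but do not close. The short exact sequence $0 \to C_*(G-e;\bbC) \to C_*(G;\bbC) \to \mathcal{Q}_*(G,e) \to 0$ is valid, but the quotient $\mathcal{Q}_*(G,e)$ is \emph{not} the chromatic symmetric chain complex of any graph: the module $\calM_{F'\cup\{e\}}$ depends on whether $e$ merges two components of $F'$ or closes a cycle, so the inductive hypothesis gives you no direct non-vanishing statement for $H_*(\mathcal{Q})$. Your suggestion to iterate the filtration on $\mathcal{Q}$ only pushes the problem down without terminating in anything you control. Moreover, even granting non-vanishing for both $H_*(G-e;\bbC)$ and $H_*(\mathcal{Q})$ in the relevant range, the long exact sequence does not preclude $H_\ell(G;\bbC)=0$: it is entirely possible for $\partial: H_{\ell+1}(\mathcal{Q}) \to H_\ell(G-e;\bbC)$ to be surjective and $\partial: H_\ell(\mathcal{Q}) \to H_{\ell-1}(G-e;\bbC)$ to be injective simultaneously, and a ``Frobenius-characteristic count of Specht module multiplicities'' cannot rule this out without much finer information than you have. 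Note also the paper's own Remark following Conjecture~\ref{spanconj}: the analogous span statement for ordinary chromatic homology is proved via a deletion-contraction long exact sequence, which the authors explicitly observe \emph{does not exist} for chromatic symmetric homology --- so the natural inductive tool is missing here, and your deletion-only sequence is a weaker substitute whose quotient is not amenable to the same analysis.
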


A {\em cut vertex} in a graph $G$ is a vertex $v$ such that $G-v$ has more connected components than $G$.
A {\em block} of a graph $G$ is a maximal connected subgraph of $G$ with no cut-vertex.
\begin{conjecture}\label{spanconj}
  Let $G$ be a graph with $n$ vertices and $m$ edges, and let $b$ denote the number of blocks of $G$.
  Then $n-b\leq \textnormal{span}_0(G)\leq n-1.$
\end{conjecture}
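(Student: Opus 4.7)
\textbf{Proof proposal for Conjecture~\ref{spanconj}.} The strategy is to prove the two inequalities separately, using the broken circuit model for the upper bound and a gluing argument along cut vertices (together with a base case for $2$-connected graphs) for the lower bound.

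For the upper bound $\text{span}_0(G) \leq n-1$, I would apply the broken circuit model of Chandler--Sazdanovic, which furnishes a chain complex quasi-isomorphic to the chromatic symmetric chain complex but concentrated in homological degrees $0$ through $n-1$ for a connected graph on $n$ vertices. This already gives $\text{span}_0(G) \leq n$, so the task reduces to showing that the top homology $H_{n-1}(G;\bbC)$ always vanishes. The top chain module $C^{BC}_{n-1}(G)$ is a direct sum over NBC spanning trees $T$ of the trivial module $\calS_{(n)}$, and the differential $d_{n-1}$ into $C^{BC}_{n-2}(G)$ deletes a single edge with the appropriate Whitney sign. I would attempt to show $\ker d_{n-1} = 0$ by producing an explicit contracting homotopy based on the matroidal structure of NBC sets (picking a reference edge-ordering and contracting along its unique minimal edge), which reduces to a standard acyclicity argument for the independence complex of a matroid restricted to NBC sets.

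For the lower bound $n - b \leq \text{span}_0(G)$, I would proceed by induction on the number of blocks $b$. The key inputs are:

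\begin{enumerate}
\item[(i)] \emph{Base case for single blocks:} If $G$ is $2$-connected with $n$ vertices (or a single edge), then $\text{span}_0(G) \geq n-1$. The natural approach is to construct explicit nonzero homology classes in bidegree $(n-2, 0)$ generalizing the polytabloid classes exhibited for $K_n$ and $K_{3,3}$ in Section~\ref{sectorsion}. Concretely, one would assemble a cycle of shape $(2^k, 1^{n-2k})$ using the straightening laws from Section~\ref{sec-two} and verify nonvanishing via an ear-decomposition of the $2$-connected graph.

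\item[(ii)] \emph{Gluing formula at cut vertices:} If $G = G_1 \vee_v G_2$ is obtained by identifying a single vertex of connected graphs $G_1$ and $G_2$, then
\[
\text{span}_0(G) \;\geq\; \text{span}_0(G_1) + \text{span}_0(G_2) - 1.
\]
This should follow from a K\"unneth-type statement: each spanning subgraph $F$ of $G$ decomposes uniquely as $F_1 \cup F_2$ with $F_i \subseteq G_i$ sharing only the vertex $v$, so $\calM_F$ factors as a suitably pointed induction product of $\calM_{F_1}$ and $\calM_{F_2}$, paralleling the classical identity $\chi_G(k) = \chi_{G_1}(k)\chi_{G_2}(k)/k$.
\end{enumerate}

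Assuming (i) and (ii), iterating (ii) over the block-cut tree with $b$ blocks of sizes $n_1,\dots,n_b$ (and using the identity $\sum_i(n_i-1) = n-1$) yields
\[
\text{span}_0(G) \;\geq\; \sum_{i=1}^{b}(n_i-1) - (b-1) \;=\; (n-1) - (b-1) \;=\; n-b,
\]
as desired. The principal obstacle is (ii): chromatic symmetric homology is not a priori known to obey a clean K\"unneth formula at cut vertices, because the induced modules twist nontrivially along the shared vertex. I expect the cleanest route is to avoid a full chain-level K\"unneth theorem and instead establish an injectivity statement directly, by exhibiting a nonzero tensor class in $H_{\text{span}_0(G_1)-1}(G_1) \otimes H_{\text{span}_0(G_2)-1}(G_2)$ that lifts to a nonzero class in $H_{\text{span}_0(G_1)+\text{span}_0(G_2)-2}(G;\bbC)$; verifying nontriviality of the lift is where the main technical effort lies.
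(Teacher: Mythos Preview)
The statement you are attempting to prove is labeled a \emph{Conjecture} in the paper, and the paper does not supply a proof. The only commentary the authors give is the remark immediately following it: the analogous inequality is known for ordinary chromatic (graph) homology via the deletion--contraction long exact sequence, but that sequence is not available for chromatic \emph{symmetric} homology, which is precisely why the statement remains open here. So there is no ``paper's own proof'' to compare your proposal against; you are outlining an attack on an open problem.

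On the merits of your outline: the upper-bound strategy is reasonable in spirit, but note that the broken circuit model already has homological width $n$ (degrees $0$ through $n-1$), so it only gives $\text{span}_0(G)\le n$ for free; the entire content of the bound $\le n-1$ is the vanishing $H_{n-1}=0$, and your sketch of a contracting homotopy on NBC spanning trees is not yet an argument---the differential $d_{n-1}$ lands in modules attached to forests with two components, and the matroidal acyclicity you invoke would have to be checked to interact correctly with the $\fS_n$-module structure, not just the underlying combinatorics. For the lower bound, you have correctly identified the crux: a K\"unneth-type behavior at cut vertices is exactly what one would want, but no such formula is established in the paper, and the induced-module twist at the shared vertex is a genuine obstruction that your ``injectivity of a tensor class'' workaround does not resolve without substantially more work. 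In short, your proposal is a plausible roadmap rather than a proof, and both halves rest on lemmas that are themselves open at the level of this paper.
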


\begin{remark}
  Conjecture \ref{spanconj} holds in the case of chromatic homology by \cite[Theorem 44]{sazdanovic2018patterns} and \cite[Theorem 4]{HGRP}. The upper bound follows from the deletion-contraction long exact sequence for chromatic homology, which does not exist for chromatic symmetric homology.
\end{remark}


\begin{conjecture}
  Let $G$ be a graph and suppose that, for each $i$, the free part of $H_{i}(G;\ZZ)$ as a $\ZZ$-module has rank $n_i$.
  Then there exists $0\leq\ell\leq k$ such that 
$
    n_0\leq n_1\leq\dots\leq n_\ell\geq n_{\ell+1}\geq n_{\ell+2}\geq\dots\geq n_k.
$
  In other words, ranks of degree 0 chromatic symmetric homology are unimodal.

\end{conjecture}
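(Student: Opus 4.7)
The plan is to combine the broken circuit model from \cite{Chan19, CS19} with known log-concavity results for the characteristic polynomial of matroids. First I would pass from the chromatic symmetric chain complex to its quasi-isomorphic broken circuit counterpart, which has support confined to homological degrees $0, 1, \ldots, n-1$; in particular this alone would already collapse the statement into a finite, combinatorially tractable assertion. In that model, the $i$-th chain module decomposes as a direct sum indexed by broken-circuit-free edge subsets of size $i$, and the dimensions of the corresponding permutation modules depend only on the partition type of the connected components of the associated spanning subgraph. This gives a concrete combinatorial expression for the alternating sum $\sum_i (-1)^i n_i$ and, one hopes, for the individual ranks $n_i$ themselves.

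Next I would attempt two parallel lines of attack. The first is to construct a chain-level Lefschetz-type operator, either multiplication by a fixed edge class or a dual contraction operator, producing maps between consecutive $H_i$'s whose iterates realize the unimodal pattern; the specific combinatorics of the induced maps $d_{\varepsilon(F,F')}$ on Specht components described in Section~\ref{sec.restrict} make this potentially explicit. The second line is to pull the unimodality back from the polynomial side: the absolute values of the coefficients of the chromatic polynomial $P_G(k)$ are log-concave by Huh's theorem, and one would hope to express each $n_i$ as a non-negative combination of quantities manifestly controlled by these coefficients. Before committing to either, I would verify the conjecture on the infinite families of trees, cycles, complete graphs $K_n$, and complete bipartite graphs $K_{m,n}$, building on Conjectures~\ref{conj.k5} and~\ref{conj.k33}, to see which mechanism is operative.

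The main obstacle is precisely that restriction to $q$-degree zero severs the direct bridge to $X_G$, since $\Frob_G(q,t)|_{q=t=1}=X_G$ mixes all $q$-degrees together. Consequently Huh's log-concavity cannot be invoked verbatim, and one must either find an independent combinatorial handle on the $n_i$ in $q$-degree zero, or build a Lefschetz-type structure internal to the chain complex whose existence is not predicted by any known general principle. A secondary difficulty is torsion: Theorems~\ref{thm.K5} and~\ref{thm.K33} show that $\bbZ_2$-torsion is present and Section~\ref{z3torsion} exhibits $\bbZ_3$-torsion, so the distinction between the free rank $n_i$ and the full $\bbZ$-module dimension must be tracked carefully, since torsion contributions to adjacent degrees could in principle distort a purely Euler-characteristic argument. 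My expectation is that establishing even the weaker statement that the sequence $n_i$ has no interior zero (so that the nonzero $n_i$ form a contiguous block, cf.\ the previous conjecture) will already require the main new idea, and that once such a positivity/connectivity statement is in hand, unimodality will follow from a refinement of the same mechanism.
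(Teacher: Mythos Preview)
The statement you are addressing is a \emph{conjecture} in the paper, not a theorem: the paper offers no proof whatsoever. It appears in Appendix~\ref{computations} alongside several other conjectures formulated purely on the basis of the computational data tabulated there for connected graphs on at most six vertices (and a handful of seven-vertex examples). So there is no ``paper's own proof'' to compare against.

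Your proposal is accordingly not a proof but a research outline, and you are candid about this: both lines of attack end in acknowledged obstacles. A few specific comments. First, the Lefschetz-operator idea is attractive in spirit, but nothing in the paper's description of the differentials suggests a natural $\mathfrak{sl}_2$-action or hard-Lefschetz structure on $H_*(G;\bbZ)$ in $q$-degree zero; the edge maps are inclusions of permutation modules, and there is no evident raising operator going the other way. Second, the appeal to Huh's log-concavity is, as you yourself note, blocked by the fact that the $q$-degree-zero ranks $n_i$ are not the Whitney numbers of $G$ and are not visibly expressible as nonnegative combinations of them; the bigraded Frobenius character mixes all $q$-degrees before specializing to $X_G$. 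Third, the broken circuit model of \cite{Chan19,CS19} does bound the homological support by $n-1$, but it gives you control over chain-module dimensions, not over homology ranks, so it does not by itself yield unimodality. In short, your plan identifies reasonable directions but none of them currently closes, and the paper does not claim otherwise.
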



\begin{thebibliography}{00}



\bibitem{Bar}
    D. Bar-Natan, {\em Fast {K}hovanov homology computations,}
    J. Knot Theory Ramifications,
    {\bf 16} (2007) 243--255.

\bibitem{Bar1}
    D. Bar-Natan, {\em On {K}hovanov's categorification of the {J}ones polynomial,}
    Algebr. Geom. Topol.,
    {\bf 2} (2002) 337--370.

\bibitem{Chan19}
  A. Chandler,
  {\em On Thin Posets and Categorification,}
  PhD thesis, North Carolina State University, 2019.
  
\bibitem{CLSS}
  A. Chandler, A. Lowrance, R. Sazdanovic, V. Summers,
  {\em Torsion in thin regions of Khovanov Homology,}
  arxiv:1903.05760, 2019.

\bibitem{CS19}
  A. Chandler, R. Sazdanovic,
  {\em A Broken Circuit Model for Graph Homology,}
  arxiv:TBD, 2019.

\bibitem{code}
  A. Chandler, R. Sazdanovic, S. Stella, M. Yip,
  {\em Code for computing chromatic symmetric homology,}
  \href{https://doi.org/10.5281/zenodo.3554572}{DOI: 10.5281/zenodo.3554572}

\bibitem{Cho95}
  T. Chow,
  {\em Symmetric function generalizations of graph polynomials,}
  PhD dissertation.



\bibitem{Ful97}
    W. Fulton,
    {\em Young tableaux,}
    Cambridge University Press, (1997).




\bibitem{HGR}
   L. Helme-Guizon and Y. Rong,
    {\em A categorification for the chromatic polynomial,}
   Algebraic \& Geometric Topology {\bf 5(4)} (2005) 1365--1388.

\bibitem{HGRP}
    L. Helme-Guizon, J. H. Przytycki, and Y. Rong,
    {\em Torsion in graph homology,}
    Fund. Math. {\bf 190} (2006) 139?177.

\bibitem{JK}
  G.D. James and A. Kerber,
  {\em The representation theory of the symmetric group,}
  Encyclopedia of mathematics and its applications {\bf 16} (1981).

\bibitem{Kan18}
  R. Kaneta and M. Yoshinaga,
  {\em Magnitude homology of metric spaces and order complexes,}
  arXiv:1803.04247, 2018.

\bibitem{Kh} M.~Khovanov,
    {\em A categorification of the Jones polynomial,} {Duke Math. J.} {\bf 101(3)} (2000) 359--426.

\bibitem{Kron2011} P. B. Kronheimer and T. S. Mrowka,
    {\em Khovanov homology is an unknot-detector,} {Duke Math. J.} Publ. Math. Inst. Hautes \'Etudes Sci., (113):97-208.





\bibitem{lampret2019chain}
    L. Lampret,
    {\em Chain complex reduction using steepness matchings,}
    arXiv:1903.00783, 2019.

\bibitem{LS}
    A. M. Lowrance and R. Sazdanovic,
    {\em Chromatic homology, Khovanov homology, and torsion,}
    Topology
and its Applications {\bf 222} (2017) 77?99.

\bibitem{MS}
  S. Mukherjee, D. Schutz,
  {\em Arbitrarily large torsion in Khovanov cohomology,}
  arXiv:1909.07269, 2019.

\bibitem{MPSWY}
    S. Mukherjee, J. H. Przytycki, M. Silvero, X. Wang, and S. Y. Yang.,
    {\em Search for torsion in Khovanov
homology,}
    Experimental Mathematics,  (2017) 1?10.

\bibitem{OS14}
  R. Orellana and G. Scott,
  {\em Graphs with equal chromatic symmetric functions,}
  Discrete Math. {\bf 320} (2014) 1--14.

\bibitem{PPS} M. Pabiniak, J. Przytycki, and R. Sazdanovi{\'c}, \textit{On the first group of the chromatic cohomology of graphs},
Geometriae Dedicata,
  \textbf{140},1 (2009) 19--48.

\bibitem{PS}
    J. H. Przytycki and R. Sazdanovic.,
    {\em Torsion in Khovanov homology of semi-adequate links,}
    Fund. Math., {\bf 225(1)} (2014) 227-304.

\bibitem{sazdanovic2018patterns}
  R. Sazdanovic and D. Scofield,
  {\em A Patterns in Khovanov link and chromatic graph homology,}
  Journal of Knot Theory and Its Ramifications {\bf 27, 3} (2018).

\bibitem{SY18}
  R. Sazdanovic and M. Yip,
  {\em A categorification of the chromatic symmetric function,}
  J. Combin. Theory Ser. A {\bf 154} (2018) 218--246.

\bibitem{S1}
  A. N. Shumakovitch,
  {\em Torsion of Khovanov homology,}
  Fund. Math., {\bf 225(1)}  (2014) 343?364.

\bibitem{S2}
  A. N. Shumakovitch,
  {\em Torsion in Khovanov homology of homologically thin knots,}
  arXiv:1806.05168, 2018.


\bibitem{Sta95}
  R. P Stanley,
  {\em A symmetric function generalization of the chromatic polynomial of a graph,}
  Adv. Math., {\bf 111} no.1 (1995) 166--194.

\bibitem{Sum19}
  V. Summers,
  {\em Torsion in the Khovanov homology of links and the magnitude homology of graphs,}
  PhD thesis, North Carolina State University, 2019.




\end{thebibliography}
\end{document}